\newcommand{\bel}[1]{\begin{equation*}\label{#1}}
	\newcommand{\be}{\begin{equation}}
		\newcommand{\ba}{\begin{eqnarray}}
			\newcommand{\ea}{\end{eqnarray}}
		\newcommand{\qe}{\end{equation}}
	\newcommand{\R}{{\mathbb R}}
	\newcommand{\N}{{\mathbb N}}
	\newcommand{\Z}{{\mathbb Z}}
	\newcommand{\C}{{\mathbb C}}
	\newcommand{\eg}{\begin{example}}
		\newcommand{\egd}{\end{example}}
	\newcommand{\tm}{\begin{thm}}
		\newcommand{\tmd}{\end{thm}}
	\newcommand{\co}{\begin{coro}}
		\newcommand{\cod}{\end{coro}}
	\newcommand{\enu}{\begin{enumerate}}
		\newcommand{\enud}{\end{enumerate}}
	\newcommand{\rmk}{\begin{rem}}
		\newcommand{\rmkd}{\end{rem}}
	\theoremstyle{theorem}
	\newtheorem{thm}{Theorem}[section]
	\newtheorem{prop}[thm]{Proposition}
	\theoremstyle{example}
	\newtheorem{example}[thm]{Example}
	\newtheorem{coro}[thm]{Corollary}
	\theoremstyle{lemma}
	\newtheorem{lemma}[thm]{Lemma}
	\theoremstyle{definition}
	\newtheorem{defi}[thm]{Definition}
	\theoremstyle{proof}
	\theoremstyle{remark}
	\newtheorem{rem}[thm]{Remark}
	\theoremstyle{remark}
\begin{document}

		\title[Well-posedness of generalized nonlinear wave equations]{The well-posedness of generalized nonlinear wave equation on the lattice graph}
		
		\author{Bobo Hua}
		\address{Bobo Hua: School of Mathematical Sciences, LMNS,
			Fudan University, Shanghai 200433, China; Shanghai Center for
			Mathematical Sciences, Fudan University, Shanghai 200433,
			China.}
		\email{bobohua@fudan.edu.cn}
		
		\author{Jiajun Wang}
		\address{Jiajun Wang: School of Mathematical Sciences,
			Fudan University, Shanghai 200433, China.}
		\email{21300180146@m.fudan.edu.cn}


		\begin{abstract}

			In this paper, we introduce a novel first-order derivative for functions on a lattice graph, and establish its weak $(1,1)$ estimate as well as strong $(p,p)$ estimate for 
			$p>1$ in weighted spaces. This derivative is designed to reconstruct the discrete Laplacian, enabling an extension of the theory of nonlinear wave equations, including quasilinear wave equations, to lattice graphs. We prove the local well-posedness of generalized quasilinear wave equations and the long-time well-posedness of these equations for small initial data. Furthermore, we prove the global well-posedness of defocusing semilinear wave equations for large initial data.

		\end{abstract}

		\maketitle
		\numberwithin{equation}{section}
		\section{introduction}\label{sec-intro}
		
		Partial differential equations (PDEs) are important mathematical tools. In recent years, there has been growing interest in the study of PDEs in discrete settings, attracting attention from both physics and mathematics. In physics, discrete analogs like the heat equation provide insights into thermal resistance between layers and their dispersion properties \cite{3}. Similarly, discrete wave equations offer descriptions of atomic vibrations in crystalline semiconductors \cite{4}, while discrete Schr\"odinger equations serve as standard models for dynamic media dynamics \cite{7}. Meanwhile, mathematicians are increasingly engaged in the study of discrete PDEs as well.  For instance, Grigor'yan, Lin, and Yang used variational methods to prove the existence of solutions for discrete nonlinear elliptic equations \cite{GLY16a,GLY16b}. Chow and Luo introduced the discrete Ricci flow and gave an alternative proof of the circle packing theorem \cite{25}. There are many interesting results in the literature; see e.g. \cite{Woess00,Barlow17,Grigoryan18,KLW21}.
		
		In this paper, we study discrete nonlinear wave equations on lattice graphs.
		Let $G=(V,E)$ be a locally finite, simple and undirected graph with the set of vertices $V$ and the set of edges $E.$ Two vertices $x,y$ are called neighbours, denoted by $x\sim y,$ if there is an edge connecting them. The discrete Laplacian is defined as, for any $f:V\to\C,$
		$$\Delta f(x)=\sum_{y\in V: y\sim x} f(y)-f(x),\quad x\in V.$$ This definition is motivated from the theories of numerical computation, electric network and random walk etc. \cite{1,2,24}. The $d$-dimensional lattice graph for $d\in \N$ consists of the set of vertices $$\Z^{d}=\lbrace m=(m_{1},\cdots,m_{d}); m_{j}\in \Z, j=1,\cdots, d\rbrace$$ and the set of edges $\{m\sim n; |m-n|=1,m,n\in \Z^d\}.$ For simplicity, we write $\Z^d$ for the $d$-dimensional lattice graph. We denote by $C_{0}(\Z^{d})$ the set of finitely supported functions on $\Z^d.$

		The discrete linear wave equation on a graph $G$ is defined as
		$$\partial_{t}^2 u(x,t) - \Delta u(x,t)=0,\quad u\in C^2_t(V\times [0,T]).$$
		Friedman and Tillich proved that the property of finite propagation speed fails for discrete linear wave equation \cite{8}. Han and the first author constructed a nontrivial solution to the Cauchy problem on $\Z$ for discrete linear wave equation with zero initial data \cite{9}, which is an analog of the Tychnoff solution to the heat equation. 
		Schultz derived dispersive estimates for discrete linear wave equations on lattice graphs $\Z^2$ and $\Z^3$ \cite{5}, and proved the existence of corresponding semilinear wave equations $\partial_{t}^2 u(x,t) - \Delta u(x,t) = F(u)$ with proper growth for the nonlinearity $F(u).$ Schultz's results were further extended to $\Z^4$ and $\Z^5$ \cite{BCH23,27}.
		See \cite{MaWang12,LX19,LX22,Hong23} for other results on discrete wave equations.	
		
		The aim of the paper is to formulate quasilinear wave equations on lattice graphs, and prove well-posedness results for them. Let $D_{j}$ be the difference operator on $\Z^d$ defined as $$D_j u(m)=u(m+e_j)-u(m),\quad u:\Z^d\to \C, m\in \Z^d,$$ where $\lbrace e_{j}\rbrace_{j=1}^{d}$ is the standard coordinate basis of $\Z^{d}.$	Direct computation shows that $$\Delta u\neq \sum_j D_j\circ D_j u,$$ which poses an obstacle on extending classical theory of nonlinear wave equations to graphs. To circumvent the difficulty, we introduce a new definition of discrete partial derivative, which is compatible with the discrete Laplacian.		\begin{defi}\label{discrete partial}
			Discrete partial derivative $\partial_{j}u$ for $u\in \bigcup_{0<p\le \infty}\ell^{p}(\Z^{d})$  is defined by convolution operator $u\ast \varphi_{j}$, where $\varphi_{j}$ is given by 
			\begin{equation*}\label{def}
				\varphi_{j}(m) :=\left\{
				\begin{array}{ll}
					\frac{-4i}{\pi (4a^{2}-1)}, \ & \mathrm{if}\  m=a e_j, a\in \Z,\\
					0,\ &\mathrm{otherwise.}
				\end{array}\right.
			\end{equation*}
		\end{defi}
		To explain the motivation of the above definition, we recall the discrete Fourier transform $\mathcal{F}$ on $\Z^d.$ We denote by $\mathbb{T}^{d}$ the $d$-dimensional torus parametrized by $[-\pi,\pi)^d.$
		\begin{defi}
			For $u\in \ell^1(\Z^d)$ and $g\in L^{1}(\mathbb{T}^{d})$, the discrete Fourier transform $\mathcal{F}$ and inverse discrete Fourier transform $\mathcal{F}^{-1}$ are defined as
			\begin{equation*}
				\mathcal{F}(u)(x):=\sum_{k\in \Z^{d}}u(k)e^{-ikx}, \quad \forall x\in \mathbb{T}^{d},
			\end{equation*}
			\begin{equation*}
				\mathcal{F}^{-1}(g)(k):=\frac{1}{(2\pi)^{d}}\int_{\mathbb{T}^d} g(x)e^{ikx}dx, \quad \forall k\in \Z^{d}.
			\end{equation*}
		\end{defi} 
		
		In fact, the discrete Fourier transform can be extended to $\ell^{p}(\Z^{d})$ for $p\in[1,2],$ which is an isometric isomorphism between $\ell^{2}(\Z^{d})$ and $L^{2}(\mathbb{T}^{d})$. Moreover, one can further extend its definition to more general spaces.
		
		\begin{rem}	
			\begin{enumerate}[(i)] \item	Using the discrete Fourier transform, one can show that for any $u\in C_{0}(\Z^{d}),$
				\begin{equation*}
					\partial_{j}u= \mathcal{F}^{-1}\left(2i\cdot \sin(\frac{x_{j}}{2})\mathcal{F}(u)(x)\right),     j=1,\cdots ,d
				\end{equation*} 
				where $x=(x_{1},\cdots ,x_{d})\in \mathbb{T}^d $ and $i=\sqrt{-1},$ see Proposition~\ref{dispartial1}.
				Since the Fourier multiplier of discrete Laplacian $\Delta$ is $-4\sum_{j=1}^{d}\sin^{2}(\frac{x_{j}}{2}),$ we have $\Delta=\sum_{j=1}^{d}\partial_{j}\circ\partial_{j}.$ This definition aligns with the classical property that the standard difference 
				$D_ju$ does not satisfy.
				\item By Definition~\ref{discrete partial}, $\partial_j u$ is a nonlocal operator, for which $\partial_ju(m)$ depends on all vertices $m+ke_j,$ $k\in \Z.$ For the case of $\Z,$ one can show that $\partial_1 u=i\sqrt{-\Delta} u,$ which coincides with a well-known fractional Laplacian that has been extensively studied in the literature, see \cite{28,Ciaurri18,Ciaurri18b,KellerN23,Wang23}.
			\end{enumerate}
		\end{rem} 
		
		For the discrete $H^1$ space, $H^1(\Z^d):=\{u\in \ell^2(\Z^d); D_j u\in \ell^2(\Z^d), \forall 1\leq j\leq d\},$ it is well-known that by the triangle inequality $H^1(\Z^d)=\ell^2(\Z^d),$ which indicates that it doesn't possess higher regularity. In order to present well-posedness results, we introduce weighted $\ell^{p}(\Z^{d})$ spaces. 
		\begin{defi}
			For $0<p\le \infty$, $\alpha\in \R$, 
			the $\ell^{p,\alpha}$ norm is defined as
			$$\|f\|_{\ell^{p,\alpha}}:=\|f_\alpha\|_{\ell^p(\Z^d)},$$ where $f_\alpha(m):=f(m)\langle m \rangle^\alpha, m\in\Z^d.$ We write
			\begin{equation*}
				\ell^{p,\alpha}(\Z^{d}):=\lbrace f:\Z^d\to \C; \|f\|_{\ell^{p,\alpha}}<\infty \rbrace.
			\end{equation*}
		\end{defi} In fact, we prove that $u\in \ell^{2,1}(\Z^{d})$ if and only if $\mathcal{F}(u)\in H^1(\mathbb{T}^d),$ see Theorem~\ref{iso}. As a subspace of $\ell^2(\Z^d),$ $\ell^{2,1}(\Z^{d})$ consists of functions decaying at least $o(\langle m \rangle^{-1})$ at infinity.
		
		Inspired by results in \cite{10,11,12,13}, we prove the well-posedness theory of generalized discrete nonlinear wave equations, including quasilinear wave equations, in the framework of $\ell^{2,k}(\Z^{d})$, for $k=0,1$. In the following, we follow Einstein's summation convention and write $\partial:=(\partial_{1}, \cdots, \partial_{d} )$, $\partial_{jk}:=\partial_{j}\circ \partial_{k}$,  $u':=(\partial_{t},\partial).$

		\par 
		\begin{thm}\label{Th1} For the following equation
			\begin{equation}\label{equation1}
				\left\{
				\begin{aligned}
					& \partial_{t}^2 u(x,t) - g^{jk}(u,u')\partial_{jk} u(x,t) = F(u,u'), \\
					& u(x,0) = f(x),\quad\partial_t u(x,0) = g(x),\quad (x,t)\in\Z^d\times \R,
				\end{aligned}
				\right.
			\end{equation}
			if $g^{jk}\in C(\mathbb{R}\times \mathbb{R}^{d+1})$, $F \in C^{1}(\mathbb{R}\times \mathbb{R}^{d+1})$, $F(0,0)=0$ and $f,g\in \ell^{2,k}(\Z^{d})$ for $k=0$ or $1$, then it has a unique classical solution $u\in C^{2}(\left[0,T\right];\ell^{2,k}(\Z^{d}))$ for some $T>0.$ Moreover, we have the continuation criterion: if maximal existence time $T^{*}$ is finite, then $\|u(\cdot,t)\|_{\ell^{\infty}(\Z^{d})}+\|\partial_{t}u(\cdot,t)\|_{\ell^{\infty}(\Z^{d})}$
			is unbounded in $\left[0,T^{*}\right)$.
		\end{thm}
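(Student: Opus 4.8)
The plan is to treat \eqref{equation1} as an ODE on the Banach space $\ell^{2,k}(\Z^d)\times \ell^{2,k}(\Z^d)$ and apply the Picard--Lindel\"of theorem, i.e. a fixed-point argument. First I would rewrite the system in first-order form: setting $v=\partial_t u$, the equation becomes $\partial_t(u,v) = \Phi(u,v)$, where $\Phi(u,v) = \bigl(v,\; g^{jk}(u,u')\partial_{jk}u + F(u,u')\bigr)$ and $u' = (v,\partial u)$. The crucial structural input is that each $\partial_j$, and hence each $\partial_{jk}$, is a bounded operator on $\ell^{2,k}(\Z^d)$: for $k=0$ this is immediate since $\varphi_j\in\ell^1(\Z)$ (so convolution is bounded on every $\ell^p$, in particular $\ell^2$) and in fact $\partial_j$ has an $L^\infty(\mathbb T^d)$ Fourier multiplier; for $k=1$ one needs the weighted $\ell^{2,1}$ boundedness, which should follow from the strong $(p,p)$ estimates in weighted spaces announced in the abstract (or directly from $\mathcal F(u)\in H^1(\mathbb T^d)$ via Theorem~\ref{iso}, since multiplication by the smooth bounded symbol $2i\sin(x_j/2)$ preserves $H^1(\mathbb T^d)$). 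Granting this, $\partial_{jk}u$ lives in the same space as $u$ with a controlled norm.

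Next I would verify that $\Phi$ is locally Lipschitz on $\ell^{2,k}\times\ell^{2,k}$. The only subtlety is the composition terms $g^{jk}(u,u')$ and $F(u,u')$: these are Nemytskii-type operators built from the continuous (resp.\ $C^1$) functions $g^{jk}$ and $F$ evaluated pointwise at $(u(m), v(m), \partial u(m))$. Here the embedding $\ell^{2,k}(\Z^d)\hookrightarrow\ell^\infty(\Z^d)$ is essential: it gives a uniform $L^\infty$ bound on the arguments on any ball, so on that ball $g^{jk}$ and $F$ (and $\nabla F$) are bounded and uniformly continuous, and $g^{jk}$ is in fact uniformly Lipschitz there (using $F\in C^1$; for $g^{jk}$ merely continuous one gets local boundedness plus the product $g^{jk}(u,u')\,\partial_{jk}u$ still defines a continuous map, and Lipschitz continuity of the full right-hand side follows once we also invoke $C^1$ regularity of $g^{jk}$ — or, if the theorem is to hold with $g^{jk}$ only continuous, one replaces Picard iteration by a compactness/Peano-type argument for existence and derives uniqueness separately from the $C^1$ structure of $F$). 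Multiplying the bounded function $g^{jk}(u,u')$ against $\partial_{jk}u\in\ell^{2,k}$ keeps us in $\ell^{2,k}$ since $\|g^{jk}(u,u')\cdot w\|_{\ell^{2,k}}\le \|g^{jk}(u,u')\|_{\ell^\infty}\|w\|_{\ell^{2,k}}$, and the map is Lipschitz in $(u,v)$ on bounded sets by combining the Lipschitz bound on the symbol with the operator bound on $\partial_{jk}$ and, for the weight-$1$ case, the boundedness of $\partial_j$ on $\ell^{2,1}$ again. Likewise $F(u,u')\in\ell^{2,k}$ because $F(0,0)=0$ and $|F(z)|\le C|z|$ on bounded sets (mean value theorem), so $|F(u,u')(m)|\le C(|u(m)|+|v(m)|+|\partial u(m)|)\langle m\rangle^{0}$ pointwise, giving an $\ell^{2,k}$ bound; Lipschitz continuity is similar.

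With $\Phi$ locally Lipschitz, the abstract Cauchy--Lipschitz theorem on Banach spaces yields a unique maximal solution $(u,v)\in C^1([0,T);\ell^{2,k})$ for some $T>0$, and bootstrapping via the equation itself ($\partial_t v = g^{jk}(u,u')\partial_{jk}u+F(u,u')$, whose right side is continuous in $t$) upgrades this to $u\in C^2([0,T);\ell^{2,k})$. For the continuation criterion I would argue by contradiction in the standard way: if $T^*<\infty$ but $\sup_{t<T^*}\bigl(\|u(\cdot,t)\|_{\ell^\infty}+\|v(\cdot,t)\|_{\ell^\infty}\bigr)=:M<\infty$, then on $[0,T^*)$ the arguments of $g^{jk}$ and $F$ stay in a fixed compact set (using also that $\|\partial u\|_{\ell^\infty}\lesssim\|u\|_{\ell^2}\le\|u\|_{\ell^{2,k}}$, or more carefully that $\partial_j:\ell^\infty\to\ell^\infty$ fails — so one must instead bound $\|\partial u(\cdot,t)\|_{\ell^\infty}$ in terms of $\|u(\cdot,t)\|_{\ell^{2}}$, which is itself controlled by a Gr\"onwall estimate on $\|(u,v)\|_{\ell^{2,k}}$ driven by the $\ell^\infty$-bound $M$). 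Concretely, a Gr\"onwall argument on $\tfrac{d}{dt}\|(u,v)(\cdot,t)\|_{\ell^{2,k}}^2\le C(M)\|(u,v)(\cdot,t)\|_{\ell^{2,k}}^2$ shows $\|(u,v)(\cdot,t)\|_{\ell^{2,k}}$ stays bounded as $t\uparrow T^*$, hence the solution extends past $T^*$, contradicting maximality.

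\emph{Main obstacle.} The delicate point is the interplay between the two norms: the nonlinearities $g^{jk},F$ are controlled only through the $\ell^\infty$-norm of the solution, while the a priori energy estimate and the fixed-point space are $\ell^{2,k}$; closing the continuation criterion therefore requires showing that an $\ell^\infty$-bound on $(u,\partial_t u)$, via Gr\"onwall, prevents blow-up of the $\ell^{2,k}$-norm — and for this one needs that $\|\partial u(\cdot,t)\|_{\ell^\infty}$ (which enters $g^{jk}$ and $F$) is itself a posteriori bounded, using $\|\partial_j u\|_{\ell^\infty}\le\|\varphi_j\|_{\ell^2}\|u\|_{\ell^2}$ and the Gr\"onwall control of $\|u\|_{\ell^2}\le\|u\|_{\ell^{2,k}}$. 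Verifying the weighted boundedness $\partial_j:\ell^{2,1}\to\ell^{2,1}$ (the $k=1$ case) is the other technical ingredient, but that is exactly what the weak $(1,1)$ / strong $(p,p)$ estimates of the earlier sections are designed to supply.
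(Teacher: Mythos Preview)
Your approach is correct but genuinely different from the paper's. You treat the problem as a Banach-space ODE and apply Picard--Lindel\"of directly to the first-order system $\partial_t(u,v)=\Phi(u,v)$. The paper instead runs a \emph{linearize-then-iterate} scheme: it defines $u_m$ as the solution of the \emph{linear} problem $\partial_t^2 u_m - g^{jk}(u_{m-1},u_{m-1}')\partial_{jk}u_m = F(u_{m-1},u_{m-1}')$, proves existence of each $u_m$ via a separate Hahn--Banach/duality argument (their Theorem~\ref{ge}), and then shows $\{u_m\}$ is Cauchy using the energy estimate of Theorem~\ref{energy estimate2}. Your route is more elementary in that it completely bypasses the linear existence theory; the price is that you must check local Lipschitz continuity of $\Phi$ directly, whereas the paper offloads existence to the linear step and only uses Lipschitz-type bounds for convergence. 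Both arguments ultimately rest on the same two ingredients you correctly identify: the boundedness of $\partial_j$ on $\ell^{2,k}$ (Theorem~\ref{bdn-p} for $k=1$) and a Gr\"onwall estimate for the continuation criterion.

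Two small corrections. First, your parenthetical ``$\partial_j:\ell^\infty\to\ell^\infty$ fails'' is wrong: since $\varphi_j\in\ell^1(\Z^d)$ (this is noted in the paper right after Lemma~\ref{Young}), Young's inequality gives $\|\partial_j u\|_{\ell^\infty}\le\|\varphi_j\|_{\ell^1}\|u\|_{\ell^\infty}$. Hence an $\ell^\infty$ bound on $u$ immediately yields an $\ell^\infty$ bound on $\partial u$, and the continuation-criterion Gr\"onwall closes with a constant depending only on $M$, with no circularity; the detour through $\|u\|_{\ell^2}$ is unnecessary. Second, your worry about $g^{jk}$ being merely $C^0$ is well-placed: Picard needs local Lipschitz, and Peano-type arguments are unavailable in infinite dimensions without compactness. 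Note, however, that the paper's own proof has the same hidden requirement --- its Step~3 asserts $[g^{jk}(u_{m-2},u_{m-2}')-g^{jk}(u_{m-1},u_{m-1}')]\partial_{jk}u_{m-1}=O_M(|u_{m-1}-u_{m-2}|+|u'_{m-1}-u'_{m-2}|)$, which already uses local Lipschitz continuity of $g^{jk}$. So this is not a defect of your strategy relative to theirs.
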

		Compared with the ill-posedness of classical framework $C_{t}^{2}(\Z^{d}\times[0,T]),$ see \cite{9}, we prove the local existence and uniqueness of the solution and derive a continuation criterion for global existence and uniqueness in the framework of $C^{2}(\left[0,T\right];\ell^{2,k}(\Z^{d}))$ for the quasilinear wave equation.
		
		\par 
		The next result is the long-time well-posedness of the following discrete quasilinear wave equation with small initial data.
		\begin{thm}\label{Th2} For the equation (\ref{equation2}) with same hypothesis as in Theorem \ref{Th1},
			\begin{equation}\label{equation2}
				\left\{
				\begin{aligned}
					& \partial_{t}^2 u(x,t) - g^{jk}(u, u')\partial_{jk} u(x,t) = F(u,u'), \\
					& u(x,0) = \varepsilon f(x),\quad\partial_t u(x,0) =\varepsilon g(x),\quad (x,t)\in\Z^d\times \R,
				\end{aligned}
				\right.
			\end{equation} 
			there exists $\delta>0$, such that the maximal existence time $$T^{\ast}\ge K\cdot \log(\log(\frac{1}{\varepsilon})),\quad \forall  0<\varepsilon<\delta,$$ where $K=K(F,g^{jk},f,g,d)$ is a positive constant.
		\end{thm}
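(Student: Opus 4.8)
The plan is to upgrade the local solution furnished by Theorem~\ref{Th1} to one with the claimed lifespan via a continuity (bootstrap) argument powered by a weighted energy estimate in $\ell^{2,k}(\Z^d)$. First, apply Theorem~\ref{Th1} to the data $(\varepsilon f,\varepsilon g)$: it produces a unique maximal solution $u\in C^2([0,T^*);\ell^{2,k}(\Z^d))$ together with the continuation criterion that $T^*<\infty$ forces $\sup_{0\le t<T^*}\big(\|u(\cdot,t)\|_{\ell^\infty}+\|\partial_t u(\cdot,t)\|_{\ell^\infty}\big)=\infty$. So it suffices to produce a time $T_1=T_1(\varepsilon)$, with $T_1(\varepsilon)\ge K\log\log(1/\varepsilon)$ for small $\varepsilon$, on which the a priori bound $\|u(\cdot,t)\|_{\ell^\infty}+\|\partial_t u(\cdot,t)\|_{\ell^\infty}\le 1$ holds; a standard continuity argument then forces $T^*\ge T_1(\varepsilon)$.

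\emph{Energy estimate.} I would work with $E(t):=\|u(\cdot,t)\|_{\ell^{2,k}}^2+\|\partial_t u(\cdot,t)\|_{\ell^{2,k}}^2$, which is $C^1$ in $t$ since $u\in C^2([0,T];\ell^{2,k})$. Two structural facts from the discrete setup are used: (i) $\partial_j$, hence $\partial_{jk}=\partial_j\circ\partial_k$, is bounded on $\ell^{2,k}(\Z^d)$ --- for $k=0$ via the bounded Fourier multiplier $2i\sin(x_j/2)$, and for $k=1$ via the weighted strong $(2,2)$ estimate established earlier in the paper; write $C_\partial$ for the operator norm; (ii) $\ell^{2,k}(\Z^d)\hookrightarrow\ell^\infty(\Z^d)$, so $\|u(\cdot,t)\|_{\ell^\infty}+\|\partial_t u(\cdot,t)\|_{\ell^\infty}\le C_{\mathrm{emb}}\sqrt{E(t)}$. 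On the relatively open set of times where $\|u\|_{\ell^\infty}+\|\partial_t u\|_{\ell^\infty}\le 1$, the pair $(u,u')(m,t)$ ranges over a fixed compact set $\mathcal K\subset\R\times\R^{d+1}$ whose size is controlled by $1$ and $C_\partial$; hence $|g^{jk}(u,u')|\le M:=\sup_{\mathcal K}|g^{jk}|<\infty$ by continuity of $g^{jk}$, while $F(0,0)=0$ and $F\in C^1$ give $|F(u,u')(m)|\le L\big(|u(m)|+|u'(m)|\big)$ with $L:=\sup_{\mathcal K}|\nabla F|<\infty$. Differentiating $E$, substituting \eqref{equation2} into $2\,\mathrm{Re}\,\langle\partial_t^2 u,\partial_t u\rangle_{\ell^{2,k}}$, and applying Cauchy--Schwarz in $\ell^{2,k}$ together with (i) and these bounds, one obtains the clean differential inequality
$$\frac{d}{dt}E(t)\le C_*\,E(t),\qquad C_*=C_*(F,g^{jk},d).$$
No integration by parts is needed here, precisely because $\partial_{jk}$ is already bounded on $\ell^{2,k}$; this is why the nonlocality of $\partial_j$ produces no commutator with the weight $\langle m\rangle^{k}$, and why no hyperbolicity or symmetry hypothesis on $g^{jk}$ is required --- the coefficient enters only through $M$.

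\emph{Lifespan.} Grönwall gives $E(t)\le E(0)e^{C_* t}$ with $E(0)=\varepsilon^2\big(\|f\|_{\ell^{2,k}}^2+\|g\|_{\ell^{2,k}}^2\big)=:C_0\varepsilon^2$, hence $\|u(\cdot,t)\|_{\ell^\infty}+\|\partial_t u(\cdot,t)\|_{\ell^\infty}\le C_{\mathrm{emb}}\sqrt{C_0}\,\varepsilon\,e^{C_* t/2}$, which stays $\le\tfrac12$ --- hence strictly below the bootstrap threshold $1$ --- for $0\le t\le T_1(\varepsilon):=\tfrac{2}{C_*}\log\tfrac{1}{2C_{\mathrm{emb}}\sqrt{C_0}\,\varepsilon}$. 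The continuity argument (the set where the bound holds is closed by continuity and open by the $\tfrac12$ improvement, hence all of $[0,\min(T_1,T^*))$; if $T^*\le T_1$ the bound holds up to $T^*$, contradicting the continuation criterion) then yields $T^*\ge T_1(\varepsilon)$. Since $T_1(\varepsilon)\sim\tfrac{2}{C_*}\log(1/\varepsilon)$ as $\varepsilon\to 0$, taking $\delta$ small and $K\le\tfrac1{C_*}$ gives $T^*\ge K\log\log(1/\varepsilon)$ for all $0<\varepsilon<\delta$ --- in fact with the stronger logarithmic rate, the $\log\log$ being a conservative statement.

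\emph{Main obstacle.} The only genuinely delicate input is fact (i) in the case $k=1$, i.e.\ boundedness of $\partial_{jk}$ on the weighted space $\ell^{2,1}(\Z^d)$ (and the compatibility $\Delta=\sum_j\partial_{jj}$, so that the problem is really stated with these operators): this weighted strong $(p,p)$ bound is exactly what, in the present discrete quasilinear setting, substitutes for the integration-by-parts energy identity that is unavailable here because $g^{jk}$ is merely continuous. Once it is granted, the remainder is a soft Grönwall/continuity argument; in particular, unlike the classical quasilinear theory, one never differentiates the equation or exploits the sign of $g^{jk}$.
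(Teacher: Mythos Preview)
Your proposal is correct and in fact yields a \emph{stronger} lifespan than the one stated, by a route that is genuinely more direct than the paper's.

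\textbf{Comparison.} Both arguments run a bootstrap/continuity scheme powered by the continuation criterion of Theorem~\ref{Th1}, and both ultimately rest on the same structural input: the boundedness of $\partial_{jk}$ on $\ell^{2,k}(\Z^d)$ (for $k=1$, Theorem~\ref{bdn-p}), which lets one absorb the quasilinear term $g^{jk}\partial_{jk}u$ using only $\|g^{jk}\|_{\ell^\infty}$. The difference is in how the energy inequality is organized. The paper first applies the \emph{integrated} estimate of Theorem~\ref{energy estimate2}, which already carries the factor $\exp\big(C\int_0^t(\sum\|g^{jk}\|_{\ell^\infty}+1)\,ds\big)$; it then bounds $\|\Box_g u\|=\|F(u,u')\|\lesssim MA(s)$ and applies Gronwall a second time. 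This two-step procedure produces a bound of the shape $A(t)\le F\,e^{Ft}A(0)$ with $F\approx e^{CMT}$, i.e.\ a double exponential in $T$, which is why solving for $T$ yields only $T^\ast\gtrsim\log\log(1/\varepsilon)$. You instead work at the differential level: under the bootstrap $\|u\|_{\ell^\infty}+\|\partial_t u\|_{\ell^\infty}\le 1$, the three contributions $\langle u,\partial_t u\rangle$, $\langle \partial_t u,\,g^{jk}\partial_{jk}u\rangle$, and $\langle\partial_t u,\,F(u,u')\rangle$ are all $\lesssim E(t)$ simultaneously, giving $E'(t)\le C_\ast E(t)$ in one stroke. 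A single Gronwall then yields $\sqrt{E(t)}\lesssim \varepsilon\,e^{C_\ast t/2}$ and hence $T^\ast\gtrsim\log(1/\varepsilon)$, which of course dominates $\log\log(1/\varepsilon)$.

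\textbf{What each approach buys.} The paper's use of the ready-made Theorem~\ref{energy estimate2} is modular (the same black box is reused for Theorem~\ref{ge} and Theorem~\ref{Th1}), but that modularity costs an unnecessary exponential here. Your one-step argument is shorter, needs no integration by parts or hyperbolicity of $g^{jk}$ (as you note), and sharpens the conclusion; in particular it also improves the paper's subsequent semilinear result (Theorem~5.1, $g^{jk}=\delta_{jk}$) from $\sqrt{\log(1/\varepsilon)}$ to $\log(1/\varepsilon)$. One small wording point: the set $\{\,t:\|u\|_{\ell^\infty}+\|\partial_t u\|_{\ell^\infty}\le 1\,\}$ is closed, not ``relatively open''; but your later description of the open--closed bootstrap via the $\tfrac12$ improvement is exactly right, so this is purely cosmetic.
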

		\par Based on the continuation criterion in Theorem \ref{Th1} and energy conservation established in Section 3, we prove the global well-posedness of the defocusing discrete nonlinear wave equation with large data $f,g\in \ell^{2,k}(\Z^{d}), k=0,1$.
		\begin{thm}\label{Th3} For $f,g\in \ell^{2,k}(\Z^{d})$ for $k=0$ or $1,$ the equation
			\begin{equation}\label{equation3}
				\left\{
				\begin{aligned}
					& \partial_{t}^2 u(x,t) - \Delta u(x,t) = -|u|^{p-1}u, \\
					& u(x,0) = f(x),\quad\partial_t u(x,0) = g(x),\quad (x,t)\in\Z^d\times \R
				\end{aligned}
				\right.
			\end{equation} 
			has a global and unique classical solution $u\in C^{2}(\left[0,T\right];\ell^{2,k}(\Z^{d}))$.
		\end{thm}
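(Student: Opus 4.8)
The plan is to bootstrap the local solution furnished by Theorem~\ref{Th1} to a global one using conservation of energy, the key structural input being the defocusing sign of the nonlinearity. First observe that \eqref{equation3} is the special case of \eqref{equation1} with $g^{jk}(u,u')\equiv\delta^{jk}$, so that $g^{jk}\partial_{jk}u=\sum_{j=1}^{d}\partial_j\circ\partial_ju=\Delta u$, and with $F(u,u')=-|u|^{p-1}u$. The coefficients $\delta^{jk}$ are (trivially) continuous, $F\in C^1(\R\times\R^{d+1})$ with $F(0,0)=0$ (we assume $p\ge 1$, so that $s\mapsto|s|^{p-1}s$ is $C^1$), and $f,g\in\ell^{2,k}(\Z^d)$; hence Theorem~\ref{Th1} provides a unique maximal classical solution $u\in C^2([0,T^*);\ell^{2,k}(\Z^d))$ together with the continuation criterion: if $T^*<\infty$, then $\|u(\cdot,t)\|_{\ell^\infty(\Z^d)}+\|\partial_tu(\cdot,t)\|_{\ell^\infty(\Z^d)}$ is unbounded on $[0,T^*)$. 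It therefore suffices to prove that this quantity remains bounded on every finite subinterval of $[0,T^*)$.

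To do so I would invoke the energy
\begin{equation*}
E(t):=\frac12\|\partial_tu(\cdot,t)\|_{\ell^2(\Z^d)}^2+\frac12\sum_{j=1}^{d}\|\partial_ju(\cdot,t)\|_{\ell^2(\Z^d)}^2+\frac{1}{p+1}\|u(\cdot,t)\|_{\ell^{p+1}(\Z^d)}^{p+1}.
\end{equation*}
Each sum is finite because $u(\cdot,t),\partial_tu(\cdot,t)\in\ell^2(\Z^d)$, because every $\partial_j$ is bounded on $\ell^2(\Z^d)$ (its Fourier multiplier $2i\sin(x_j/2)$ is bounded, cf.\ the Remark following Definition~\ref{discrete partial}), and because $\ell^2(\Z^d)\hookrightarrow\ell^{p+1}(\Z^d)$ for $p\ge1$. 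Pairing \eqref{equation3} with $\partial_tu(\cdot,t)$ in $\ell^2(\Z^d)$, differentiating the series termwise — legitimate by the preceding remarks and by $u\in C^2([0,T^*);\ell^{2,k})$ — and summing by parts via $\partial_j^*=-\partial_j$ on $\ell^2(\Z^d)$, so that $\langle\Delta u,\partial_tu\rangle=-\sum_{j}\langle\partial_ju,\partial_t\partial_ju\rangle$, yields $\tfrac{d}{dt}E(t)=0$; this is the energy conservation established in Section~3, which applies here verbatim. Thus $E(t)=E(0)$ on $[0,T^*)$, and $E(0)<\infty$ since $f,g\in\ell^{2,k}(\Z^d)\subseteq\ell^2(\Z^d)$ (whence $\|g\|_{\ell^2}$, $\|\partial_jf\|_{\ell^2}\le 2\|f\|_{\ell^2}$, and $\|f\|_{\ell^{p+1}}\le\|f\|_{\ell^2}$ are all finite).

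Now the defocusing sign enters: all three summands of $E$ are nonnegative, so for every $t\in[0,T^*)$, using the trivial lattice embeddings $\ell^q(\Z^d)\hookrightarrow\ell^\infty(\Z^d)$,
\begin{equation*}
\|\partial_tu(\cdot,t)\|_{\ell^\infty(\Z^d)}\le\|\partial_tu(\cdot,t)\|_{\ell^2(\Z^d)}\le\sqrt{2E(0)},\qquad \|u(\cdot,t)\|_{\ell^\infty(\Z^d)}\le\|u(\cdot,t)\|_{\ell^{p+1}(\Z^d)}\le\big((p+1)E(0)\big)^{\frac{1}{p+1}}.
\end{equation*}
This uniform bound contradicts the continuation criterion of Theorem~\ref{Th1} unless $T^*=\infty$, so the solution is global; it is unique by the uniqueness assertion of Theorem~\ref{Th1}. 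The cases $k=0$ and $k=1$ are treated identically, because neither the continuation criterion nor the energy bound involves the weight $\langle m\rangle$.

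I expect the only genuinely delicate point to be the rigorous derivation of the energy identity — interchanging $d/dt$ with the infinite sums and justifying the summation-by-parts for the nonlocal operators $\partial_j$ — but this is exactly what Section~3 supplies, and the rest of the argument is soft. It is worth noting that, unlike in the Euclidean setting, no Strichartz or Sobolev input is required: on $\Z^d$ one has $\ell^2\hookrightarrow\ell^\infty$ for free, so the conserved energy directly controls the $\ell^\infty$-norms appearing in the continuation criterion.
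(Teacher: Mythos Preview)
Your proof is correct and follows essentially the same strategy as the paper: invoke the local theory and continuation criterion from Theorem~\ref{Th1}, then use the conserved defocusing energy (Theorem~\ref{ec2}) to preclude blow-up of the $\ell^\infty$ norms. The only minor variation is that you bound $\|u(\cdot,t)\|_{\ell^\infty}$ directly via the potential term $\tfrac{1}{p+1}\|u\|_{\ell^{p+1}}^{p+1}$ in the energy, obtaining a bound uniform in $t$, whereas the paper instead integrates the uniformly bounded $\|\partial_t u(\cdot,t)\|_{\ell^2}$ via the fundamental theorem of calculus to control $\|u(\cdot,t)\|_{\ell^2}$ (and hence $\|u\|_{\ell^\infty}$) on each finite interval; both routes close the argument equally well.
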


		\par 
		We organize this paper as follows. In Section 2, we prove some useful properties for discrete partial derivative $\partial_{j},$ see Theorem~\ref{bdn-p} for its $(p,p)$ boundedness in weighted spaces. In Section 3, we derive energy estimates for discrete wave equations, which are key properties for the proof of the well-posedness for nonlinear wave equations. In Section 4 and Section 5, we establish the well-posedness theory for nonlinear wave equations. Finally, some interesting and useful results are collected in the last section.

		\noindent
		\textbf{Notation.}
		\begin{itemize}
			\item  By $u\in C_{t}^{k}(\Z^{d}\times [0,T])$, we mean $u(x,t)$ is $C^k$ continuous in time for any fixed vertex $x.$
			\item By $u\in C^{k}([0,T]; B)( or \; L^{p}([0,T];B))$ for a Banach space $B,$ we mean $u$ is a $C^{k}(or \; L^{p})$ map from $[0,T]$ to $B;$ see e.g. \cite{14}.
			\item By $u\in C_{0}(\Z^{d}\times [0,T])$, we mean $u$ has compact support on $\Z^{d}\times [0,T]$.
			\item By $A\lesssim B$ (resp. $A\approx B$), we mean there is a positive constant $C$, such that $A\le CB$ (resp. $C^{-1}B\le A \le C B$). If the constant $C$ depends on $p,$ then we write $A\lesssim_{p}B$ (resp. $A\approx_{p} B$).
			\item Set $\langle m \rangle:=(1+|m|^{2})^{1/2}$ and $|m|:=(\sum_{j=1}^{d}|m_{j}|^{2})^{1/2}$ for $m=(m_{1},\cdots, m_{d})\in \Z^{d}$.
			
		\end{itemize}
		
		\section{Operator properties for discrete partial derivatives}
		In this section, we prove some properties for the discrete partial derivative defined in the introduction.
		\begin{prop}\label{dispartial1} For any $u\in C_0(\Z^d),$
			$$\partial_{j}u= \mathcal{F}^{-1}\left(2i\cdot \sin(\frac{x_{j}}{2})\mathcal{F}(u)(x)\right),     j=1,\cdots ,d.$$
		\end{prop}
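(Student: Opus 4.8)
The plan is to work on the Fourier side. Since $\partial_j$ is by definition the convolution operator $u\mapsto u\ast\varphi_j$, and the discrete Fourier transform turns convolution into pointwise multiplication, the whole statement reduces to computing the Fourier symbol of $\varphi_j$ and checking that it equals $2i\sin(\tfrac{x_j}{2})$ (more precisely, the even function $2i|\sin(\tfrac{x_j}{2})|$ on $\T^d=[-\pi,\pi)^d$, which is what the symbol $2i\sin(\tfrac{x_j}{2})$ denotes here, consistent with $\partial_1=i\sqrt{-\Delta}$ for $d=1$). The conclusion then follows by applying $\mathcal{F}^{-1}$.

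First I would record the elementary facts that make the manipulations legitimate. The coefficients $\tfrac{-4i}{\pi(4a^2-1)}$ are $O(a^{-2})$ and telescope under partial fractions, so $\varphi_j\in\ell^1(\Z^d)$ (in fact $\|\varphi_j\|_{\ell^1}=8/\pi$), and $u\in C_0(\Z^d)\subset\ell^1(\Z^d)$. Hence $\partial_j u=u\ast\varphi_j\in\ell^1(\Z^d)$ and, by Fubini, $\mathcal{F}(\partial_j u)=\mathcal{F}(u)\cdot\mathcal{F}(\varphi_j)$ with all series absolutely convergent. Since $\varphi_j$ is supported on the axis $\{ae_j:a\in\Z\}$ and even there,
\[
\mathcal{F}(\varphi_j)(x)=\sum_{a\in\Z}\frac{-4i}{\pi(4a^2-1)}e^{-iax_j}=\frac{-4i}{\pi}\sum_{a\in\Z}\frac{\cos(ax_j)}{4a^2-1}.
\]

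The only step requiring thought is summing this series, and I would do it via the classical Fourier expansion $|\sin\theta|=\tfrac2\pi-\tfrac4\pi\sum_{n\ge1}\frac{\cos(2n\theta)}{4n^2-1}$: taking $\theta=x_j/2$ and adding the $a=0$ term ($=-1$) gives $\sum_{a\in\Z}\frac{\cos(ax_j)}{4a^2-1}=-\tfrac{\pi}{2}|\sin(\tfrac{x_j}{2})|$, whence $\mathcal{F}(\varphi_j)(x)=2i|\sin(\tfrac{x_j}{2})|$. (Equivalently, one may avoid quoting the expansion and instead verify directly that $\mathcal{F}^{-1}\!\big(2i|\sin(\tfrac{x_j}{2})|\big)(ae_j)=\tfrac{2i}{\pi}\int_0^\pi\sin(\tfrac{\theta}{2})\cos(a\theta)\,d\theta$, which after a product-to-sum identity and an elementary integration equals $\tfrac{-4i}{\pi(4a^2-1)}=\varphi_j(ae_j)$; since $\varphi_j\in\ell^2(\Z^d)$ and $2i|\sin(\tfrac{\cdot}{2})|\in L^2(\T^d)$, applying $\mathcal{F}$ gives the same symbol.) Finally $\mathcal{F}(\partial_j u)(x)=2i\sin(\tfrac{x_j}{2})\mathcal{F}(u)(x)$, and because the right-hand side is a continuous function on $\T^d$ times the trigonometric polynomial $\mathcal{F}(u)$, hence in $L^2(\T^d)$, applying $\mathcal{F}^{-1}$ yields $\partial_j u=\mathcal{F}^{-1}\!\big(2i\sin(\tfrac{x_j}{2})\mathcal{F}(u)\big)$. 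I expect no serious obstacle: everything but the series evaluation is bookkeeping, and that evaluation is just the Fourier series of $|\sin|$ (or one elementary trigonometric integral); the only point to be careful about is stating the symbol as the genuinely torus-periodic even function $2i|\sin(\tfrac{x_j}{2})|$.
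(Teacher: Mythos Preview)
Your proof is correct and essentially coincides with the paper's: both reduce the claim to the elementary trigonometric integral $\int_0^{2\pi}\sin(\tfrac{x}{2})\cos(ax)\,dx=\tfrac{-4}{4a^2-1}$, the only difference being that you package it as ``convolution theorem plus the Fourier series of $|\sin|$'' (or, in your alternative, compute $\mathcal F^{-1}$ of the symbol directly), while the paper expands $\mathcal F(u)$ as a finite sum and evaluates the same integral term by term. Your observation that on $[-\pi,\pi)$ the symbol is really $2i|\sin(\tfrac{x_j}{2})|$ is correct and a useful clarification; the paper sidesteps this by integrating over $[0,2\pi]$, where $\sin(\tfrac{x}{2})\ge 0$.
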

		\begin{proof} Since the general case is similar, we only prove the case for $d=1$. 
			Note that \begin{equation*}
				\dfrac{1}{2\pi}\int_{0}^{2\pi} \mathcal{F}(u)(x)2i\cdot \sin\left(\dfrac{x}{2}\right)e^{ikx}dx=\dfrac{1}{2\pi}\int_{0}^{2\pi} \sum_{m\in \Z}u(m) 2i\cdot \sin\left(\dfrac{x}{2}\right)e^{i(k-m)x}dx.
			\end{equation*} 
			By calculation, the imaginary part of the above equals to
			\begin{equation*}
				i\sum_{m\in \Z}\int_{0}^{2\pi}\frac{2\sin(\frac{x}{2})\cdot \cos(k-m)x}{2\pi} u(m)dx =\sum_{m\in \Z}\frac{-4i}{\pi[4(k-m)^{2}-1]}\cdot u(m).
			\end{equation*} 
			And its real part is zero.
			This proves the result.
		\end{proof}

		We recall well-known Young's inequality \cite{15}.
		\begin{lemma}\label{Young}
			Let $\lambda$ be the left Haar measure on a locally compact group G, that satisfies $\lambda(A)=\lambda(A^{-1})$, for any measurable A$\subseteq$G , $A^{-1}:=\lbrace g^{-1}; g\in A\rbrace$. Let $1\le p,q,r\le \infty$ satisfy
			\begin{equation*}
				\frac{1}{q}+1=\dfrac{1}{p}+\dfrac{1}{r}.
			\end{equation*}
			Then for every $f \in L^{p}(G)$ and $g\in L^{r}(G)$ we have the following inequality
			\begin{equation*}
				\|f\ast g\|_{L^{q}(G)} \lesssim_{p,q,r}\| g\|_{L^{r}(G)} \cdot \|f\|_{L^{p}(G)},
			\end{equation*}
			where the convolution $f\ast g$ is defined as 
			\begin{equation*}
				(f\ast g)(x)=\int_{G}f(y)g(y^{-1}x)d\lambda(y).
			\end{equation*}
		\end{lemma}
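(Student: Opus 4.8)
The statement is the classical Young convolution inequality on a unimodular group, and the plan is to prove it directly via a three-exponent H\"older inequality rather than by interpolation. First I would record the admissible ranges: from $\frac1q+1=\frac1p+\frac1r$ together with $1\le p,q,r\le\infty$ one gets $\frac1q\le\frac1p$ and $\frac1q\le\frac1r$; moreover $p=\infty$ forces $r=1$, $q=\infty$ (and symmetrically for $r$), while $q=\infty$ forces $\frac1p+\frac1r=1$. I would peel off these degenerate configurations, where the inequality is either trivial or a plain H\"older estimate, and in the remaining case (with $p,q,r$ all finite) set $\frac1{s_1}:=\frac1p-\frac1q$ and $\frac1{s_2}:=\frac1r-\frac1q$, so that $q,s_1,s_2\in[1,\infty]$ and $\frac1q+\frac1{s_1}+\frac1{s_2}=1$.

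Next, using $|f\ast g|\le |f|\ast|g|$ pointwise, for fixed $x$ I would factor the integrand of $(|f|\ast|g|)(x)$ as
$$|f(y)|\,|g(y^{-1}x)|=\bigl(|f(y)|^{p}|g(y^{-1}x)|^{r}\bigr)^{1/q}\cdot|f(y)|^{1-p/q}\cdot|g(y^{-1}x)|^{1-r/q}$$
and apply H\"older with exponents $q,s_1,s_2$. The three factors contribute, in order, $\bigl(\int|f(y)|^{p}|g(y^{-1}x)|^{r}\,d\lambda(y)\bigr)^{1/q}$, then $\|f\|_{L^p}^{1-p/q}$, and finally $\bigl(\int|g(y^{-1}x)|^{r}\,d\lambda(y)\bigr)^{1/s_2}=\|g\|_{L^r}^{1-r/q}$, where the last integral is evaluated using that $\lambda$ is unimodular---a consequence of the assumed inversion-invariance $\lambda(A)=\lambda(A^{-1})$. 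Then I would raise this pointwise bound to the power $q$, integrate in $x$, and apply Tonelli's theorem, turning the leftover double integral into $\int|f(y)|^{p}\bigl(\int|g(y^{-1}x)|^{r}\,d\lambda(x)\bigr)\,d\lambda(y)=\|f\|_{L^p}^{p}\|g\|_{L^r}^{r}$, where the inner integral equals $\|g\|_{L^r}^{r}$ by left-invariance of $\lambda$ (substitute $z=y^{-1}x$). Collecting exponents yields $\|f\ast g\|_{L^q}^{q}\le\|f\|_{L^p}^{q-p}\|g\|_{L^r}^{q-r}\cdot\|f\|_{L^p}^{p}\|g\|_{L^r}^{r}=\bigl(\|f\|_{L^p}\|g\|_{L^r}\bigr)^{q}$, i.e.\ the claimed inequality, in fact with constant $1$.

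The only delicate point is the first step: one must check that $s_1,s_2$ are legitimate H\"older exponents in every boundary case of $(p,q,r)$---for instance $s_1=\infty$ when $p=q$ and $s_1=q'$ when $p=1$---and correctly isolate the genuinely degenerate configurations. Once the exponents are set up, the remaining steps use only H\"older, Tonelli, and the left- and inversion-invariance of Haar measure, and are routine. (An alternative is Riesz--Thorin interpolation between the endpoint estimates $\|f\ast g\|_{L^r}\le\|g\|_{L^r}\|f\|_{L^1}$ and $\|f\ast g\|_{L^\infty}\le\|g\|_{L^r}\|f\|_{L^{r'}}$, but this needs those same endpoint bounds as input and brings no real simplification.)
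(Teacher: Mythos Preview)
Your proof is correct and is the standard three-factor H\"older argument for Young's inequality; the exponent bookkeeping and the uses of left-invariance and inversion-invariance of $\lambda$ are all in order, and the constant you obtain is indeed $1$. Note, however, that the paper does not supply its own proof of this lemma: it merely recalls the result and cites a reference, so there is no ``paper's proof'' to compare against. Your direct argument is entirely appropriate here and in fact sharper (constant $1$) than the $\lesssim_{p,q,r}$ statement in the lemma.
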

		\begin{rem}
			In this paper, we will apply Young's inequality to the special case $G=\Z^{d}$ and $\lambda$ is the counting measure. In this case, the convolution is given by $(f\ast g)(k)=\sum_{m\in \Z^{d}}f(k-m)g(m)$. In fact, Young's inequality can be strengthened by replacing $\| \cdot\|_{L^{r}(G)}$ with weaker Lorentz norm $\| \cdot\|_{L^{r,\infty}(G)}$ and requiring $1< p,q,r<\infty$ instead of $1\le p,q,r\le \infty$, but we don't need this stronger version in developing operator properties for discrete partial derivatives.
		\end{rem}
		
		\par 
		The $\ell^{p}$-boundedness of the discrete partial derivative is straightforward.
		\begin{thm} For $f\in \ell^{p}(\Z^{d})$, $1 \le p \le \infty$, $j=1, \cdots , d$, we have			\begin{equation*}
				\|\partial_{j}f \|_{\ell^{p}(\Z^{d})}\lesssim_{p} \| f\|_{\ell^{p}(\Z^{d})}.
			\end{equation*}
			
		\end{thm}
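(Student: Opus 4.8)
The plan is to reduce the estimate to Young's inequality (Lemma~\ref{Young}) applied on $G=\Z^d$ with the counting measure, taking exponents $r=1$ and $q=p$, which are admissible since $\frac1p+1=\frac1p+\frac11$. By Definition~\ref{discrete partial} we have $\partial_j f=f\ast\varphi_j$, so Young's inequality would give
\begin{equation*}
	\|\partial_j f\|_{\ell^p(\Z^d)}=\|f\ast\varphi_j\|_{\ell^p(\Z^d)}\lesssim_p\|\varphi_j\|_{\ell^1(\Z^d)}\,\|f\|_{\ell^p(\Z^d)},
\end{equation*}
valid uniformly for $1\le p\le\infty$. Thus the whole statement follows once we know that $\varphi_j\in\ell^1(\Z^d)$, and the constant absorbed into $\lesssim_p$ is then $\|\varphi_j\|_{\ell^1(\Z^d)}$ (which is in fact independent of $p$, but stating $\lesssim_p$ is harmless).

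So the one thing to verify is the summability of $\varphi_j$. Since $\varphi_j$ is supported on the axis $\{ae_j:a\in\Z\}$ and $\varphi_j(ae_j)=\frac{-4i}{\pi(4a^2-1)}$, we compute
\begin{equation*}
	\|\varphi_j\|_{\ell^1(\Z^d)}=\sum_{a\in\Z}\frac{4}{\pi\,|4a^2-1|}=\frac{4}{\pi}+\frac{8}{\pi}\sum_{a\ge 1}\frac{1}{4a^2-1},
\end{equation*}
where the $a=0$ term contributes $\frac{4}{\pi}$ (here $4a^2-1=-1$). The remaining series converges by comparison with $\sum_{a\ge1}a^{-2}$; in fact, using the partial fraction decomposition $\frac{1}{4a^2-1}=\frac12\big(\frac{1}{2a-1}-\frac{1}{2a+1}\big)$, the sum telescopes to $\tfrac12$, so $\|\varphi_j\|_{\ell^1(\Z^d)}=\frac4\pi+\frac4\pi=\frac8\pi<\infty$. (One does not need the exact value; mere finiteness suffices.)

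Combining these two observations yields the claimed bound. There is no real obstacle here: the content of the statement is entirely the $\ell^1$-summability of the kernel $\varphi_j$, which holds because its coefficients decay like $a^{-2}$ along the $j$-th axis; everything else is the standard Young convolution inequality already recorded in Lemma~\ref{Young}. The case $p=\infty$ is included without any extra work since Lemma~\ref{Young} allows $q=\infty$.
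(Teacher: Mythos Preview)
Your proof is correct and follows exactly the same approach as the paper: write $\partial_j f=f\ast\varphi_j$, observe that $\varphi_j\in\ell^1(\Z^d)$, and apply Young's inequality (Lemma~\ref{Young}). You give more detail than the paper (which simply asserts $\varphi_j\in\ell^1$ as ``a simple observation''), including the explicit computation $\|\varphi_j\|_{\ell^1}=\tfrac{8}{\pi}$, but the argument is identical.
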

		\begin{proof}
			From Definition~\ref{discrete partial},  $\partial_{j}u$ can be written as $u\ast \varphi_{j}$. A simple observation shows that $\varphi_{j}\in \ell^{1}(\Z^{d}).$ Hence, we can apply Lemma~\ref{Young} and immediately get the result.
		\end{proof}
		Another interesting property is that the operator norm of discrete partial derivative $\partial_{j}$ and that of difference operator $D_{j}$ are equivalent. We first recall the discrete Sobolev seminorm \cite{16}.
		\begin{defi}
			Let $G=(V,E)$ be a locally finite graph. For $f:V\to \C$ and $1\le p\le\infty,$the discrete Sobolev seminorm $\|\cdot\|_{D^{1,p}(G)}$  is defined as
			\begin{equation*}
				||f||_{D^{1,p}(G)} :=\left\{
				\begin{aligned}
					&\left(\frac12\sum_{u,v\in V:u\sim v} |f(u)-f(v)|^p\right)^{\frac{1}{p}},\ p\in[1,\infty),\\
					&\sup_{u,v\in V:u\sim v} |f(u)-f(v)|,\ p=\infty.
				\end{aligned}\right.
			\end{equation*}
		\end{defi}
		Now we have the following result.		\begin{thm}
			For $1<p<\infty$ and $f:\Z^{d}\to \C,$
			\begin{equation*}
				\|\partial_{j}f\|_{\ell^{p}(\Z^{d})}\approx \|D_{j}f\|_{\ell^{p}(\Z^{d})},\quad	\|\partial f\|_{\ell^{p}(\Z^{d})}\approx \|f\|_{D^{1,p}(\Z^{d})}.
			\end{equation*} 
		\end{thm}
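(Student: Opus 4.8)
The plan is to write both $\partial_j$ and $D_j$ as convolution operators on $\Z^d$ whose Fourier symbols differ only by the unimodular ``half-shift'' factor $e^{\pm ix_j/2}$, and then to show that convolution by this factor is bounded on $\ell^p(\Z^d)$ precisely for $1<p<\infty$ --- because, up to a harmless modulation and an $\ell^1$ error, it is a discrete Hilbert transform acting in the single variable $x_j$. Granting this, the equivalence $\|\partial_j f\|_{\ell^p}\approx \|D_j f\|_{\ell^p}$ is immediate, and the comparison of $\|\partial f\|_{\ell^p(\Z^d)}$ with $\|f\|_{D^{1,p}(\Z^d)}$ follows by summing over $j$.

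\textbf{Step 1 (symbols).} For $f\in C_0(\Z^d)$, Proposition~\ref{dispartial1} gives $\mathcal{F}(\partial_j f)(x)=2i\sin(x_j/2)\mathcal{F}(f)(x)$, while a direct computation gives $\mathcal{F}(D_j f)(x)=(e^{ix_j}-1)\mathcal{F}(f)(x)$. Since $e^{ix_j}-1=e^{ix_j/2}\cdot 2i\sin(x_j/2)$, let $M_j$ be the convolution operator on $\Z^d$ with symbol $e^{ix_j/2}$ (so $M_j^{-1}$ has symbol $e^{-ix_j/2}$); by injectivity of $\mathcal{F}$ on $C_0(\Z^d)\subset\ell^1(\Z^d)$ one obtains $D_j f=M_j\partial_j f$ and $\partial_j f=M_j^{-1}D_j f$ for all $f\in C_0(\Z^d)$.

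\textbf{Step 2 (the half-shift on $\ell^p$, $1<p<\infty$).} This is the heart of the matter. The operator $M_j^{-1}$ is convolution in the $j$-th coordinate only, with one-dimensional kernel $c_k=\mathcal{F}^{-1}(e^{-ix/2})(k)=\frac{1}{2\pi}\int_{-\pi}^{\pi}e^{-ix/2}e^{ikx}\,dx$, which one computes to be $c_k=\frac{(-1)^{k+1}}{\pi(k-1/2)}$, of order $|k|^{-1}$. Since $\frac{1}{k-1/2}-\frac{1}{k}=O(k^{-2})$, we may write $c_k=(-1)^{k+1}(\frac{1}{\pi k}+r_k)$ for $k\neq 0$ with $\{r_k\}\in\ell^1(\Z)$, the $k=0$ term being bounded; thus, modulo the $\ell^p$-isometric modulation $g(k)\mapsto(-1)^k g(k)$, $M_j^{-1}$ is $\pm$ the discrete Hilbert transform in the variable $x_j$ plus convolution by an $\ell^1$ kernel. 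The discrete Hilbert transform is bounded on $\ell^p(\Z)$ for every $1<p<\infty$ (a classical fact; since its kernel decays only like $|k|^{-1}$, this genuinely fails at $p=1$ and $p=\infty$), and convolution by an $\ell^1$ kernel is bounded on every $\ell^p$ by Young's inequality (Lemma~\ref{Young}); tensoring over the remaining coordinates via Fubini, $M_j^{-1}$ --- and symmetrically $M_j$ --- is bounded on $\ell^p(\Z^d)$ for all $1<p<\infty$. I expect this step to be the only genuine obstacle; everything else is bookkeeping.

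\textbf{Step 3 (conclusion).} For $f\in C_0(\Z^d)$, Steps 1--2 give $\|D_j f\|_{\ell^p}=\|M_j\partial_j f\|_{\ell^p}\lesssim_p\|\partial_j f\|_{\ell^p}$ and $\|\partial_j f\|_{\ell^p}=\|M_j^{-1}D_j f\|_{\ell^p}\lesssim_p\|D_j f\|_{\ell^p}$; since $C_0(\Z^d)$ is dense in $\ell^p(\Z^d)$ and both $D_j$ (by the triangle inequality) and $\partial_j$ (by the $\ell^p$-boundedness established above) are bounded on $\ell^p(\Z^d)$, the two-sided estimate $\|\partial_j f\|_{\ell^p}\approx_p\|D_j f\|_{\ell^p}$ extends to all $f\in\ell^p(\Z^d)$, which is the first assertion. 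For the second, each edge of $\Z^d$ is $\{m,m+e_j\}$ for a unique pair $(m,j)$, so $\|f\|_{D^{1,p}(\Z^d)}^p=\sum_{j=1}^d\|D_j f\|_{\ell^p(\Z^d)}^p$, while $\|\partial f\|_{\ell^p(\Z^d)}\approx_d (\sum_{j=1}^d\|\partial_j f\|_{\ell^p(\Z^d)}^p)^{1/p}$ by equivalence of norms on $\C^d$; applying the first assertion to each $j$ completes the argument.
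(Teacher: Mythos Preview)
Your argument is correct, and it takes a somewhat different route from the paper. The paper first reduces to $d=1$ (the same one-variable reduction you do via Fubini), then invokes the identity $\partial_1=i\sqrt{-\Delta}$ on $\Z$ and appeals directly to the $\ell^p$-boundedness of the discrete Riesz transform $D_1(-\Delta)^{-1/2}$ as a black box. You instead factor $D_j=M_j\partial_j$ at the level of symbols via the half-shift $e^{ix_j/2}$, compute the one-dimensional kernel of $M_j^{\pm1}$ explicitly, and recognize it --- after the isometric modulation $g(k)\mapsto(-1)^kg(k)$ --- as the discrete Hilbert transform plus an $\ell^1$ correction. Both proofs ultimately rest on a classical singular-integral fact on $\Z$ (Riesz transform for the paper, Hilbert transform for you), so neither is deeper than the other; your version has the advantage of making the connecting operator completely explicit and of isolating exactly why $p=1,\infty$ are excluded, while the paper's version is shorter because it outsources the analysis to the cited references. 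Your treatment of the second equivalence (writing $\|f\|_{D^{1,p}}^p=\sum_j\|D_jf\|_{\ell^p}^p$ and comparing norms on $\C^d$) is also more explicit than the paper's one-line ``obviously implies''.
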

		\begin{proof}
			The first equivalence obviously implies the second, hence we only need to prove the former. Note that the discrete partial derivative $\partial_{j}$ and difference operator $D_{j}$ of a function at the vertex $k$ only involve function values on vertices $k+a e_j, a\in \Z.$ Hence, it suffices to prove the result for the case $d=1$. A key observation is that $\partial_1 u=i\sqrt{-\Delta} u$ for $\Z.$ Therefore, applying the boundedness of the Riesz transform in \cite{22,23}, we prove the result.
		\end{proof}
		
		\par 
		Our main results on discrete wave equations are based on the $\ell^{2}(\Z^{d})$ space and generalized $\ell^{2,1}(\Z^{d})$ space. Key properties are that both of them are Hilbert spaces and are isometrically isomorphic to $L^2(\mathbb{T}^d)$ and $H^{1}(\mathbb{T}^d)$ respectively by the Fourier transform; see Theorem \ref{iso}. Besides, in inner product spaces we have following useful integration by part formula.		
		\begin{thm}\label{integration by part}
			Suppose $u,v\in \ell^{2}(\Z^{d})$, $j=1,\cdots,d$, then we have 			\begin{equation*}
				\sum_{k\in\Z^{d}}\partial_{j}u(k)\overline{v(k)}=-\sum_{k\in\Z^{d}}u(k)\overline{\partial_{j}v(k)}.
			\end{equation*}
			If $u,v$ are real-valued functions, then we have
			\begin{equation*}
				\sum_{k\in\Z^{d}}\partial_{j}u(k)v(k)=\sum_{k\in\Z^{d}}u(k)\partial_{j}v(k).
			\end{equation*}
			
		\end{thm}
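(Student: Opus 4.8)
The plan is to move the identity to the Fourier side. By Proposition~\ref{dispartial1} the operator $\partial_j$ is, after Fourier conjugation, multiplication by the symbol $2i\sin(x_j/2)$; since this symbol is $i$ times a \emph{real} function, complex conjugation turns it into its negative, and the integration by parts formula becomes a one-line consequence of Parseval's identity. The only real work is to make everything legitimate on all of $\ell^2(\Z^d)$, not merely on $C_0(\Z^d)$ where Proposition~\ref{dispartial1} was stated.

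First, the $\ell^p$-boundedness of $\partial_j$ established above (applied with $p=2$) shows $\partial_j u,\partial_j v\in\ell^2(\Z^d)$ whenever $u,v\in\ell^2(\Z^d)$, so all four sums in the statement are absolutely convergent. Next I upgrade the multiplier formula of Proposition~\ref{dispartial1} from $C_0(\Z^d)$ to all of $\ell^2(\Z^d)$: the map $u\mapsto\mathcal{F}(\partial_j u)$ is continuous from $\ell^2(\Z^d)$ to $L^2(\mathbb{T}^d)$, being the composition of the bounded operator $\partial_j$ on $\ell^2(\Z^d)$ with the isometry $\mathcal{F}$; and $u\mapsto 2i\sin(x_j/2)\,\mathcal{F}(u)$ is continuous as well, being the composition of $\mathcal{F}$ with multiplication by the bounded function $2i\sin(x_j/2)$. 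Since the two maps agree on the dense subspace $C_0(\Z^d)$, they agree on $\ell^2(\Z^d)$, i.e.
\[
\mathcal{F}(\partial_j u)(x)=2i\sin(x_j/2)\,\mathcal{F}(u)(x),\qquad u\in\ell^2(\Z^d).
\]
Applying Parseval's identity (polarization of the isometry $\mathcal{F}\colon\ell^2(\Z^d)\to L^2(\mathbb{T}^d)$) to the pairs $(\partial_j u,v)$ and $(u,\partial_j v)$ gives
\[
\sum_{k\in\Z^d}\partial_j u(k)\,\overline{v(k)}
=\frac{1}{(2\pi)^d}\int_{\mathbb{T}^d}2i\sin(x_j/2)\,\mathcal{F}(u)(x)\,\overline{\mathcal{F}(v)(x)}\,dx
\]
and
\[
-\sum_{k\in\Z^d}u(k)\,\overline{\partial_j v(k)}
=-\frac{1}{(2\pi)^d}\int_{\mathbb{T}^d}\mathcal{F}(u)(x)\,\overline{2i\sin(x_j/2)\,\mathcal{F}(v)(x)}\,dx.
\]
Since $\sin(x_j/2)$ is real, $\overline{2i\sin(x_j/2)\,\mathcal{F}(v)(x)}=-2i\sin(x_j/2)\,\overline{\mathcal{F}(v)(x)}$, and the outer minus sign then turns the second display into the first; this proves the first identity.

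For the real-valued case, note that $\varphi_j$ takes values in $i\R$, so $\overline{\varphi_j}=-\varphi_j$; hence if $v$ is real then $\overline{\partial_j v}=\overline{v\ast\varphi_j}=v\ast\overline{\varphi_j}=-\partial_j v$, and substituting this into the first identity removes both the conjugation and the sign, yielding $\sum_k\partial_j u(k)v(k)=\sum_k u(k)\partial_j v(k)$. A Fourier-free route is also possible: writing $\partial_j u(k)=\sum_{a\in\Z}\varphi_j(ae_j)\,u(k-ae_j)$ and interchanging the order of summation — legitimate because $\varphi_j\in\ell^1(\Z^d)$ and $u,v\in\ell^2(\Z^d)$, so the resulting double series is absolutely convergent by Cauchy--Schwarz — the identity reduces to the elementary symmetries $\varphi_j(-m)=\varphi_j(m)$ and $\overline{\varphi_j(m)}=-\varphi_j(m)$ together with a relabelling of the summation index. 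I expect the only step requiring genuine care in either route to be exactly this passage from the dense class $C_0(\Z^d)$ to general $\ell^2(\Z^d)$ data (respectively, the justification of the interchange of summation); the algebraic content of the statement is entirely formal.
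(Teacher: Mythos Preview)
Your proof is correct and follows essentially the same approach as the paper: move to the Fourier side via Parseval, exploit that the multiplier $2i\sin(x_j/2)$ is $i$ times a real function, and for the real-valued case use that $\partial_j$ of a real function is purely imaginary. Your version is in fact more careful than the paper's about extending the multiplier formula from $C_0(\Z^d)$ to $\ell^2(\Z^d)$ by density, a point the paper handles only implicitly.
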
	
		\begin{proof}
			Since the discrete Fourier transform is an isometric isomorphism between $\ell^{2}(\Z^{d})$ and $L^{2}(\mathbb{T}^d)$, we have the following identity
			\begin{equation*}
				RHS=\langle	\mathcal{F}(u),\mathcal{F}(-\partial_{j}v) \rangle_{L^{2}(\mathbb{T}^d)}=\frac{-1}{(2\pi)^{d}}\int_{\mathbb{T}^d}\mathcal{F}(u)(x)\cdot \overline{2i \sin\left(\dfrac{x}{2}\right)\mathcal{F}(v)(x)}dx
			\end{equation*} 
			\begin{equation*}
				=\frac{1}{(2\pi)^{d}}\int_{\mathbb{T}^d}2i \sin\left(\frac{x}{2}\right)\mathcal{F}(u)(x)\overline{\mathcal{F}(v)(x)}=\langle	\mathcal{F}(\partial_{j}u),\mathcal{F}(v) \rangle_{L^{2}(\mathbb{T}^d)}=LHS. 
			\end{equation*}
			Recalling the calculation in Proposition~\ref{dispartial1}, we know that the discrete partial derivative of a real-valued function is purely imaginary, which implies the second assertion.
		\end{proof}
		From the above proof, the discrete partial derivative is in fact a skew-adjoint operator. Besides, the importance of this integration by part formula lies in the loss of derivation rule of the product. More precisely, there is no similar rule such as $\partial_{j}(uv)=\partial_{j}(v)u+\partial_{j}(u)v$, which poses an obstacle on transferring the derivative between two functions. Fortunately, Theorem~\ref{integration by part} provides such a useful tool in the global sense.
		Next, we give some remarks on $\ell^{p,\alpha}(\Z^{d})$.
		\begin{rem}
			By H$\ddot{o}$lder's inequality, the $\ell^{p,\alpha}(\Z^{d})$-norm$(\alpha>0)$ is stronger than the traditional $\ell^{p}(\Z^{d})$-norm. Note that 
			\begin{equation*}
				\sum_{m\in \Z^{d}}|f(m)|^{q}\le \left(\sum_{m\in \Z^{d}}|f(m)|^{p} \langle m \rangle^{\alpha p}\right)^{q/p}\left(\sum_{m\in\Z^{d}}\langle m \rangle^{-\frac{\alpha pq}{p-q}}\right)^{1-q/p}.
			\end{equation*}
			Therefore, if $\dfrac{\alpha pq}{p-q}>d$, then we obtain that $\ell^{p,\alpha}(\Z^{d})\subseteq\ell^{q}(\Z^{d})$, for any $\dfrac{dp}{d+\alpha p}<q\le p$. Since $\ell^{p}(\Z^{d})\subseteq \ell^{r}(\Z^{d}), \forall r>p$, we further deduce that $\ell^{p,\alpha}(\Z^{d})\subseteq\ell^{q}(\Z^{d})$, for any $\dfrac{dp}{d+\alpha p}<q\le \infty$. In particular, we have $\ell^{p,\alpha}(\Z^{d})\subseteq\ell^{p}(\Z^{d})$.
		\end{rem}
		\begin{rem}
			There are two main reasons to define $\ell^{p,\alpha}(\Z^{d})$-norm. The first reason is that the traditional $\ell^{p}(\Z^{d})$-norm has no kind of regularity in the discrete setting, while we need some stronger norm to inherit Sobolev type estimate from classical theory of nonlinear wave equation. The second reason is from the following observation. In Sobolev type estimate, we use the derivative of a function to control itself, but in discrete setting the situation is reversed. Hence, in the traditional framework there is no analogue of Sobolev spaces. However, based on the connection between $\Z^{d}$ and $\mathbb{T}^d$, the classical Sobolev structure in  $\mathbb{T}^d$ maybe a candidate for the Sobolev structure in $\Z^{d}$. We prove the following isomorphism.
		\end{rem}
		\begin{thm}\label{iso}
			$\mathcal{F}^{-1}: H^{k}(\mathbb{T}^{d}) \to \ell^{2,k}(\Z^{d})$ is an isomorphism, with the equivalence $\|\mathcal{F}^{-1}(f)\|_{\ell^{2,k}(\Z^{d})}\approx \|f\|_{H^{k}(\mathbb{T}^{d})}$, $\forall f\in H^{k}(\mathbb{T}^{d}), k\ge0.$
		\end{thm}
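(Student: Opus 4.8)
The plan is to reduce the claim to a Parseval-type identity on each Fourier mode via differentiation under the integral sign. Recall that for $u\in C_0(\Z^d)$ we have $\mathcal{F}(u)(x)=\sum_k u(k)e^{-ikx}$, and term-by-term differentiation gives $\partial_{x_j}\mathcal{F}(u)(x)=\sum_k (-ik_j)u(k)e^{-ikx}=\mathcal{F}(-ik_j u(k))(x)$. More generally, for a multi-index $\beta$ with $|\beta|\le k$, one has $\partial_x^\beta \mathcal{F}(u)=\mathcal{F}((-i)^{|\beta|}m^\beta u(m))$ as functions in $L^2(\mathbb{T}^d)$, where $m^\beta u(m)$ lies in $\ell^2$ precisely because the polynomial weight $m^\beta$ is dominated by $\langle m\rangle^k$. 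Applying the Plancherel identity $\|\mathcal{F}(v)\|_{L^2(\mathbb{T}^d)}=(2\pi)^{d/2}\|v\|_{\ell^2(\Z^d)}$ to $v=(-i)^{|\beta|}m^\beta u$ then yields $\|\partial_x^\beta \mathcal{F}(u)\|_{L^2(\mathbb{T}^d)}=(2\pi)^{d/2}\|m^\beta u\|_{\ell^2(\Z^d)}$.

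Summing over $|\beta|\le k$ and using the elementary two-sided comparison $\langle m\rangle^{2k}\approx \sum_{|\beta|\le k}|m^\beta|^2$ (a statement about polynomials on $\R^d$, uniform in $m$) gives, for $u=\mathcal{F}^{-1}(f)$ with $f$ a trigonometric polynomial,
\begin{equation*}
\|f\|_{H^k(\mathbb{T}^d)}^2=\sum_{|\beta|\le k}\|\partial_x^\beta f\|_{L^2(\mathbb{T}^d)}^2=(2\pi)^{d}\sum_{|\beta|\le k}\|m^\beta u\|_{\ell^2(\Z^d)}^2\approx \|u\|_{\ell^{2,k}(\Z^d)}^2,
\end{equation*}
which is the desired norm equivalence on a dense subspace. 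It remains to pass from trigonometric polynomials to all of $H^k(\mathbb{T}^d)$: the trigonometric polynomials are dense in $H^k(\mathbb{T}^d)$, so given $f\in H^k(\mathbb{T}^d)$ pick $f_n\to f$ in $H^k$; the equivalence just proved shows $\{\mathcal{F}^{-1}(f_n)\}$ is Cauchy in $\ell^{2,k}(\Z^d)$, hence converges to some $u\in \ell^{2,k}(\Z^d)$, and a limiting argument (using that $\mathcal{F}^{-1}$ is already known to be an isometric isomorphism $L^2(\mathbb{T}^d)\to\ell^2(\Z^d)$, so $\mathcal{F}^{-1}(f_n)\to\mathcal{F}^{-1}(f)$ in $\ell^2$, forcing $u=\mathcal{F}^{-1}(f)$) upgrades the equivalence to all of $H^k$. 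Surjectivity follows symmetrically: given $u\in\ell^{2,k}(\Z^d)$, approximate by finitely supported $u_n$, note $\mathcal{F}(u_n)$ is Cauchy in $H^k(\mathbb{T}^d)$, and identify the limit with $\mathcal{F}(u)\in H^k$.

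The main technical point to be careful about is the justification of differentiation under the summation/integration sign: for a general $u\in\ell^{2,k}(\Z^d)$ the series $\sum_k u(k)e^{-ikx}$ need not converge absolutely, so one cannot differentiate it pointwise and naively. This is exactly why I route the argument through trigonometric polynomials (where everything is a finite sum and all manipulations are trivially legitimate) and only then take limits in the Hilbert-space norms. The comparison $\langle m\rangle^{2k}\approx\sum_{|\beta|\le k}|m^\beta|^2$ deserves a one-line justification—both sides are positive, homogeneous-ish in $|m|$ of degree $2k$ up to lower-order terms, so the ratio is bounded above and below on $\Z^d$ by compactness of the sphere plus behavior at infinity; alternatively one can cite that $\langle\xi\rangle^{2k}$ and $\sum_{|\beta|\le k}\xi^{2\beta}$ are comparable on $\R^d$. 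With these two observations in hand the proof is essentially Plancherel mode by mode.
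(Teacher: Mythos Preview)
Your proof is correct and follows essentially the same approach as the paper: both use the identity $\partial_x^\beta \mathcal{F}(u)=\mathcal{F}((-i)^{|\beta|}m^\beta u)$ together with Plancherel and the elementary comparison $\langle m\rangle^{2k}\approx\sum_{|\beta|\le k}|m^\beta|^2$. The paper only writes out the case $k=1$, $d=1$ and works directly with $f\in H^1(\mathbb{T})$ via integration by parts, whereas you handle general integer $k$ and $d$ and are more explicit about the density argument through trigonometric polynomials; this extra care is welcome but does not constitute a different route.
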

		\begin{proof}
			For simplicity, we only deal with the case $k=1$ and $d=1$, since the general case is similar. For $f\in H^{1}(\mathbb{T}^{d})$, we have $f'\in L^{2}(\mathbb{T}^{d}).$ Then 
			\begin{equation*}
				\mathcal{F}^{-1}(f')(m)=\frac{1}{2\pi}\int_{0}^{2\pi}f'(x)e^{imx}dx=-im\cdot \mathcal{F}^{-1}(f)(m).
			\end{equation*}
			As $\mathcal{F}$ is an isometric isomorphism between $\ell^{2}(\Z)$ and $L^{2}(\mathbb{T})$, we have
			\begin{equation*}
				\|f\|_{H^{1}(\mathbb{T})}=\|f\|_{L^{2}(\mathbb{T})}+\|f'\|_{L^{2}(\mathbb{T})}=\left(\sum_{m\in \Z}|\mathcal{F}^{-1}(f)(m)|^2\right)^{1/2}+\left(\sum_{m\in \Z}|-im \mathcal{F}^{-1}(f)(m)|^2\right)^{1/2}
			\end{equation*}
			\begin{equation*}
				\approx {\left(\sum_{m\in \Z}|\langle m \rangle \mathcal{F}^{-1}(f)(m)|^2\right)^{1/2}}=\|\mathcal{F}^{-1}(f)\|_{\ell^{2,1}(\Z)}. 
			\end{equation*}
			Similarly, we can prove that $\mathcal{F}:\ell^{2,k}(\Z)\to H^{k}(\mathbb{T})$ is a bi-Lipschitz equivalence $\|u\|_{\ell^{2,k}(\Z)}\approx \|\mathcal{F}(u)\|_{H^{k}(\mathbb{T})}$,$\forall u\in \ell^{2,k}(\Z) $. Thus, $\mathcal{F}^{-1}$ is an isomorphism. 
		\end{proof}

		Now, we derive some important properties of discrete partial derivatives on the $\ell^{p,\alpha}(\Z^{d})$-norm, which will be useful in the study of discrete nonlinear wave equations in the framework of $\ell^{p,\alpha}(\Z^{d})$-space.
		\par 
		First, we notice that there is no satisfactory $\ell^{p,\alpha}$-boundedness theory for the discrete partial derivative when $\alpha >1$, in the sense that, there exist $p>1$, such that for $j=1,\cdots,d$, $\|\partial_{j}f\|_{\ell^{p,\alpha}(\Z^{d})}\lesssim \|f\|_{\ell^{p,\alpha}(\Z^{d})}$ fails. In fact, we can take $f=\delta_{0}$ (whose value is $1$ at $k=0$ and $0$ elsewhere) and, without loss of generality, we still assume $d=1$. Hence,
		\begin{equation*}
			(\partial_1 \delta_{0})(m)=\sum_{k\in \Z}\delta_{0}(k)\varphi(m-k)=\varphi(m)=\frac{-4i}{\pi(4m^2-1)},
		\end{equation*}
		which  does not belong to $\ell^{p,\alpha}(\Z)$, when $p\le\dfrac{1}{2-\alpha}(>1)$.
		\par 
		Therefore, the only interesting case left is $\alpha=1$. We briefly recall weak $L^{p}$ spaces and the famous Marcinkiewicz Interpolation Theorem \cite{15}.
		\begin{defi}
			For $0<p<\infty$, let $(X,\mu)$ be the measure space. The so-called weak $L^{p}(X,\mu)$, denoted by $L^{p,\infty}(X,\mu),$ is defined as the set of all measurable functions $f$ satisfying
			\begin{equation*}
				\|f\|_{L^{p,\infty}(X,\mu)}:=\sup_{\alpha >0}\lbrace \alpha d_{f}(\alpha)^{1/p} \rbrace<\infty,
			\end{equation*}
			where $d_{f}(\alpha):=\mu(\lbrace |f|\ge \alpha \rbrace)$ is the distribution function of $f$. Additionally, the weak $L^{\infty}(X,\mu)$ is defined as the original $L^{\infty}(X,\mu)$ space.
		\end{defi}
		\begin{rem}
			By definition and Chebyshev's inequality,  one sees that $L^{p}(X,\mu) \subseteq L^{p,\infty}(X,\mu) $ and $\|\cdot\|_{L^{p,\infty}(X,\mu)}\le \|\cdot\|_{L^{p}(X,\mu)} $. When $X=\Z^{d}$ and $\mu$ is counting measure, the weak $L^{p}(X,\mu)$ norm is just $\sup_{\alpha >0}\lbrace \alpha |\lbrace |f|\ge\ \alpha\rbrace|^{1/p}\rbrace$, where $|A|$ means the cardinality of a set A.
		\end{rem}
		Next, we recall the useful Marcinkiewicz Interpolation Theorem.
		\begin{lemma}\label{interpolation}
			Let $(X,\mu)$ and $(Y,\nu)$ be measure spaces and $0<p<q\le \infty,$ and $T$ be a linear operator defined on the space $L^{p}(X)+L^{q}(X)$ with values in the space of measurable functions on $Y$. Suppose that there are two endpoint weak-boundedness as follows
			\begin{equation*}
				\|T(f)\|_{L^{p,\infty}(Y)}\lesssim \|f\|_{L^{p}(X)}  , \forall f\in L^{p}(X),
			\end{equation*}
			\begin{equation*}
				\|T(f)\|_{L^{q,\infty}(Y)}\lesssim \|f\|_{L^{q}(X)}  , \forall f\in L^{q}(X).
			\end{equation*}
			Then for all $p<r<q$ and for all $f\in L^{r}(X)$ we have the $L^{r}$-boundedness
			\begin{equation*}
				\|T(f)\|_{L^{r}(Y)}\lesssim\|f\|_{L^{r}(X)}  , \forall f\in L^{r}(X).
			\end{equation*}
		\end{lemma}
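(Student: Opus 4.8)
The plan is to reduce the assertion to a pointwise bound on the distribution function $d_{Tf}(\alpha)$ and then integrate, via the standard truncation of $f$ at a height proportional to the level $\alpha$. Recall the layer-cake identity $\|Tf\|_{L^r(Y)}^r=r\int_0^\infty\alpha^{r-1}d_{Tf}(\alpha)\,d\alpha$, so it is enough to control $d_{Tf}(\alpha)$ for each $\alpha>0$. Fix a constant $\gamma>0$ to be chosen at the very end and, for each level $\alpha$, write $f=f^\alpha+f_\alpha$ with $f^\alpha:=f\cdot\mathbf{1}_{\{|f|>\gamma\alpha\}}$ the large part and $f_\alpha:=f\cdot\mathbf{1}_{\{|f|\le\gamma\alpha\}}$ the small part. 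Because $p<r<q$, one checks that $f^\alpha\in L^p(X)$ and, when $q<\infty$, $f_\alpha\in L^q(X)$: on $\{|f|>\gamma\alpha\}$ one has $|f|^p\le(\gamma\alpha)^{p-r}|f|^r$ since $p-r<0$, and on $\{|f|\le\gamma\alpha\}$ one has $|f|^q\le(\gamma\alpha)^{q-r}|f|^r$ since $q-r>0$. By linearity $|Tf|\le|Tf^\alpha|+|Tf_\alpha|$, hence $d_{Tf}(\alpha)\le d_{Tf^\alpha}(\alpha/2)+d_{Tf_\alpha}(\alpha/2)$.

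Next I would feed in the two weak endpoint hypotheses. From $\|Tf^\alpha\|_{L^{p,\infty}}\lesssim\|f^\alpha\|_{L^p}$ and the definition of the weak $L^p$ norm one gets $d_{Tf^\alpha}(\alpha/2)\lesssim\alpha^{-p}\|f^\alpha\|_{L^p}^p=\alpha^{-p}\int_{\{|f|>\gamma\alpha\}}|f|^p\,d\mu$, and similarly $d_{Tf_\alpha}(\alpha/2)\lesssim\alpha^{-q}\int_{\{|f|\le\gamma\alpha\}}|f|^q\,d\mu$ when $q<\infty$. Inserting these into the layer-cake formula and exchanging the order of integration by Tonelli's theorem, the $\alpha$-integral in the first term collapses to $\int_0^{|f(x)|/\gamma}\alpha^{r-1-p}\,d\alpha=\tfrac{1}{r-p}(|f(x)|/\gamma)^{r-p}$, which converges at the origin precisely because $r>p$; the second collapses to $\int_{|f(x)|/\gamma}^{\infty}\alpha^{r-1-q}\,d\alpha=\tfrac{1}{q-r}(|f(x)|/\gamma)^{r-q}$, which converges at infinity precisely because $r<q$. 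Performing the remaining integrations in $x$, both terms become constant multiples of $\|f\|_{L^r(X)}^r$, so $\|Tf\|_{L^r(Y)}^r\lesssim\bigl(\tfrac{\gamma^{p-r}}{r-p}+\tfrac{\gamma^{q-r}}{q-r}\bigr)\|f\|_{L^r(X)}^r$, the implied constant absorbing the two endpoint operator norms and harmless powers of $2$ and $r$; taking $\gamma=1$ (or optimizing in $\gamma$) finishes the case $q<\infty$.

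The endpoint $q=\infty$ has to be handled separately, since there $f_\alpha$ only lies in $L^\infty$ and the factor $\alpha^{-q}$ is meaningless. In that case I would use the strong $L^\infty$ bound directly: $\|Tf_\alpha\|_{L^\infty(Y)}\le C\|f_\alpha\|_{L^\infty(X)}\le C\gamma\alpha$, so choosing $\gamma$ small enough that $C\gamma\le\tfrac{1}{2}$ forces $\|Tf_\alpha\|_{L^\infty(Y)}\le\alpha/2$ and hence $d_{Tf_\alpha}(\alpha/2)=0$; only the $L^p$ contribution survives, and the same Tonelli computation gives $\|Tf\|_{L^r(Y)}^r\lesssim\|f\|_{L^r(X)}^r$. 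I expect the only genuinely delicate points to be the coupling of the truncation height to the level $\alpha$ so that the two one-dimensional integrals converge at their respective ends — this is exactly where the strict inequalities $p<r<q$ are used — together with verifying the finiteness of $\|f^\alpha\|_{L^p}$ and $\|f_\alpha\|_{L^q}$ before invoking the hypotheses, and separating out the $q=\infty$ case; everything else is bookkeeping.
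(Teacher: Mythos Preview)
Your argument is the standard and correct proof of the Marcinkiewicz interpolation theorem: truncate $f$ at a height proportional to the level $\alpha$, apply the weak-type bounds to the two pieces, and integrate via the layer-cake formula and Tonelli, with the $q=\infty$ case handled by choosing the truncation height so that the bounded piece contributes nothing. The delicate points you flag (the strict inequalities $p<r<q$ ensuring convergence of the two $\alpha$-integrals, and the separate treatment of $q=\infty$) are exactly the right ones.

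As for comparison with the paper: there is nothing to compare. The paper does not prove this lemma at all; it is merely \emph{recalled} as the classical Marcinkiewicz Interpolation Theorem with a citation to \cite{15}, and then invoked as a black box in the proof of Theorem~\ref{bdn-p}. Your write-up therefore supplies a proof the paper omits, and it is the textbook one.
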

		Now we state our key result for operator properties of discrete partial derivatives. These are the weak $(1,1)$ boundedness and strong $(p,p)$ boundedness in weighted spaces.
		\begin{thm}\label{bdn-p}
			For any $1<p\le \infty$, we have the $\ell^{p,1}$-boundedness for the discrete partial derivative $\partial_j,$ $j=1,\cdots,d$, that is, for any $f\in \ell^{p,1}(\Z^{d})$,
			\begin{equation*}
				\|\partial_{j}f\|_{\ell^{p,1}(\Z^{d})}\lesssim_{p}\|f\|_{\ell^{p,1}(\Z^{d})}.
			\end{equation*}
			For the critical case $p=1$, the $\ell^{1,1}$-boundedness of discrete partial derivative fails, but the weak $\ell^{1,1}$-boundedness holds, that is, for any $f\in \ell^{1,1}(\Z^{d})$, we have
			\begin{equation*}
				\sup_{\alpha >0}\alpha \left|\left\lbrace |\partial_{j}f(k)|\ge \frac{\alpha}{\langle k\rangle}\right\rbrace\right|\lesssim \|f\|_{\ell^{1,1}(\Z^{d})}.
			\end{equation*}
		\end{thm}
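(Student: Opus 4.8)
The plan is to exploit that $\varphi_j$ from Definition~\ref{discrete partial} is supported on the line $\Z e_j$, so that $\partial_j=(\cdot)\ast\varphi_j$ acts only in the $e_j$-direction; writing $m=(m',m_j)$ with $m'$ the coordinates other than the $j$-th, and abbreviating $\varphi(a):=\tfrac{-4i}{\pi(4a^2-1)}$ for the one-dimensional profile, one has $\langle m\rangle=(\langle m'\rangle^2+m_j^2)^{1/2}\approx\langle m'\rangle+|m_j|$. The strategy is to prove the two endpoint statements — the weak $\ell^{1,1}$-bound ($p=1$) and the strong $\ell^{\infty,1}$-bound ($p=\infty$) — and then recover $1<p<\infty$ by Marcinkiewicz interpolation (Lemma~\ref{interpolation}) applied to the linear operator $g\mapsto\langle\cdot\rangle\,\partial_j(\langle\cdot\rangle^{-1}g)$ on $\Z^d$, which is precisely $\partial_j$ read between the weighted spaces. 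Both endpoints rest on one algebraic identity: expanding $m_j=(m_j-a)+a$ inside $\partial_jf(m)=\sum_{a\in\Z}\varphi(a)f(m-ae_j)$ gives
\begin{equation*}
	\langle m\rangle\,|\partial_jf(m)|\ \lesssim\ \big|(g_1\ast\varphi_j)(m)\big|+\big|(g_2\ast\varphi_j)(m)\big|+\big|(f\ast\psi_j)(m)\big|,
\end{equation*}
where $g_1(n):=\langle n'\rangle f(n)$, $g_2(n):=n_jf(n)$, and $\psi_j$ is supported on $\Z e_j$ with profile $\psi(a):=a\,\varphi(a)$.

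Since $|\varphi(a)|\approx\langle a\rangle^{-2}$ we have $\varphi_j\in\ell^1(\Z^d)$, so Young's inequality (Lemma~\ref{Young}) bounds the first two terms in any $\ell^p(\Z^d)$, $1\le p\le\infty$, by $\|g_1\|_{\ell^p}+\|g_2\|_{\ell^p}\lesssim\|f\|_{\ell^{p,1}(\Z^d)}$ (using $\langle n'\rangle,|n_j|\le\langle n\rangle$); thus everything reduces to the term $f\ast\psi_j$. Here $\psi(a)=a\varphi(a)=\tfrac{-i}{\pi}\big(\tfrac1{2a-1}+\tfrac1{2a+1}\big)$ is a one-dimensional discrete Calderón–Zygmund kernel: it is odd, satisfies $|\psi(a)|\lesssim\langle a\rangle^{-1}$ and $|\psi(a)-\psi(a-1)|\lesssim\langle a\rangle^{-2}$, and — equivalently, by Proposition~\ref{dispartial1} and the fact that multiplication by $a$ corresponds to $i\tfrac{d}{dx}$ on the Fourier side — has the bounded, bounded-variation Fourier multiplier $i\tfrac{d}{dx}\big(2i\sin\tfrac x2\big)=-\cos\tfrac x2$. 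Hence convolution with $\psi$ is of weak type $(1,1)$ and bounded on $\ell^p(\Z)$ for $1<p<\infty$ (the classical discrete Calderón–Zygmund theory, as in the Riesz/Hilbert transform bounds used in Section~2, cf.\ \cite{22,23}), and applied slice-by-slice in the $e_j$-direction it gives the corresponding bounds for $f\mapsto f\ast\psi_j$ on $\Z^d$ — for the weak $(1,1)$ bound one uses that $\{|f\ast\psi_j|\ge\alpha\}$ is the disjoint union over $m'$ of the one-dimensional level sets, so cardinalities add. In particular $\|f\ast\psi_j\|_{\ell^{1,\infty}}\lesssim\|f\|_{\ell^1}\le\|f\|_{\ell^{1,1}}$, which combined with the $\varphi_j$-terms (bounded in $\ell^1\subseteq\ell^{1,\infty}$) yields the weak $\ell^{1,1}$-bound.

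For $p=\infty$ the splitting is unavailable for the last term (as $\psi_j\notin\ell^1$), so one argues directly: with $M:=\|f\|_{\ell^{\infty,1}(\Z^d)}$ one has $|f(n)|\le M\langle n\rangle^{-1}$, hence, using $|\varphi(a)|\approx\langle a\rangle^{-2}$,
\begin{equation*}
	\langle m\rangle\,|\partial_jf(m)|\ \lesssim\ M\,\langle m\rangle\sum_{a\in\Z}\langle a\rangle^{-2}\,\langle m-ae_j\rangle^{-1}.
\end{equation*}
A direct computation — splitting the sum according to $|a|\le|m_j|/2$ (where $|m_j-a|\ge|m_j|/2$, so $\langle m-ae_j\rangle\gtrsim\langle m\rangle$) and $|a|>|m_j|/2$ (where $\langle a\rangle\gtrsim\langle m_j\rangle$, and one invokes the tail bound $\sum_{|a|>N}\langle a\rangle^{-2}\lesssim\langle N\rangle^{-1}$) — shows $\langle m\rangle\sum_a\langle a\rangle^{-2}\langle m-ae_j\rangle^{-1}\lesssim1$ uniformly in $m$, giving $\|\partial_jf\|_{\ell^{\infty,1}}\lesssim\|f\|_{\ell^{\infty,1}}$.

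With the weak $\ell^{1,1}$-bound and the strong $\ell^{\infty,1}$-bound in hand, Marcinkiewicz interpolation (Lemma~\ref{interpolation}, with the $\ell^\infty$-endpoint viewed as the trivial weak bound, $\ell^{\infty,\infty}=\ell^\infty$) gives $\|\partial_jf\|_{\ell^{p,1}}\lesssim_p\|f\|_{\ell^{p,1}}$ for all $1<p<\infty$; alternatively the decomposition above together with the $\ell^p$-boundedness of $f\mapsto f\ast\psi_j$ gives this directly. Finally, the strong $\ell^{1,1}$-bound genuinely fails: for $f=\delta_0$ one has $\delta_0\in\ell^{1,1}(\Z^d)$ while $\partial_j\delta_0=\varphi_j$ satisfies $\|\varphi_j\|_{\ell^{1,1}}\approx\sum_{a\neq0}|a|\cdot|a|^{-2}=\infty$. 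The main obstacle in this scheme is the term $f\ast\psi_j$: identifying $\psi(a)=a\varphi(a)$ as a discrete Calderón–Zygmund (Hilbert-transform-type) kernel and using its weak $(1,1)$ and $\ell^p$ bounds is where the ``derivative loss'' caused by multiplying with the weight $\langle m\rangle$ is really felt — and it is also the reason the estimate drops to weak type at $p=1$ and (as noted before the theorem) fails for $\alpha>1$. The remaining ingredients — the slice reduction, the $\ell^1$-kernel bounds via Young's inequality, the elementary kernel sum at $p=\infty$, the interpolation, and the counterexample — are routine.
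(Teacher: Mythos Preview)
Your argument is correct, but it proceeds by a genuinely different decomposition than the paper's. You write $\langle m\rangle\approx\langle m'\rangle+|m_j|$ and split $m_j=(m_j-a)+a$, which isolates the single ``bad'' convolution $f\ast\psi_j$ with the Hilbert-transform-type kernel $\psi(a)=a\varphi(a)$ (Fourier multiplier $-\cos(x/2)$); you then invoke discrete Calder\'on--Zygmund theory for its weak $(1,1)$ and $\ell^p$ bounds. The paper instead multiplies by $\langle k\rangle^2$ and keeps $\alpha\langle k\rangle$ on the right-hand side of the level set, using $k_1^2\lesssim(k_1-m)^2+m^2$ and $\sqrt{k_2^2+\cdots+k_d^2+m^2}\lesssim\langle k\rangle+|k_1-m|$; the point of this is that after these substitutions \emph{every} kernel that appears lies in $\ell^1$, so the weak $(1,1)$ bound follows from Young's inequality and Chebyshev alone, with the residual factor $\langle k\rangle$ on the right absorbing the piece that in your scheme becomes the Hilbert-transform term (see the handling of (F3)). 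Thus the paper is more self-contained at $p=1$ (no CZ theory needed), while your route is structurally cleaner --- the weight loss is attributed to a single identifiable operator --- and gives the strong $\ell^{p,1}$ bound for $1<p<\infty$ directly from the decomposition, without even passing through interpolation. The $p=\infty$ endpoints and the final interpolation step are essentially the same in both proofs; one small point at $p=\infty$: in your Case~2 ($|a|>|m_j|/2$) you should also use $\langle m-ae_j\rangle\ge\langle m'\rangle$, since the tail bound $\sum_{|a|>|m_j|/2}\langle a\rangle^{-2}\lesssim\langle m_j\rangle^{-1}$ alone only gives $\langle m\rangle\langle m_j\rangle^{-1}$, which is unbounded when $|m'|$ is large.
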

		\par 
		\begin{proof}
			By the symmetry, we only need to prove the case for $j=1$. We first prove the second statement. For $k=(k_{1},\cdots,k_{d}),$
			\begin{equation*}
				\left|\left\lbrace |\partial_{1}f(k)|\ge \alpha\langle k\rangle^{-1}\right\rbrace\right|=|\lbrace |\partial_{1}f(k)|\langle k\rangle^{2}\ge \alpha\langle k\rangle\rbrace|.
			\end{equation*}
			Since $(k_{1})^{2}\lesssim (k_{1}-m)^{2}+m^{2}$, we have 
			\begin{equation*}
				|\partial_{1}f(k_{1},k_{2},\cdots,k_{d})|\langle (k_{1},k_{2},\cdots,k_{d})\rangle^{2}\le\frac{4}{\pi}\sum_{m\in \Z}|f(m,k_{2},\cdots, k_{d})|\frac{k_{1}^{2}+\cdots+k_{d}^2+1}{|4(k_{1}-m)^2-1|}
			\end{equation*}
			\begin{equation*}
				\lesssim \sum_{m\in \Z}\frac{|f(m,k_{2},\cdots, k_{d})|}{|4(k_{1}-m)^2-1|}\left[((k_{1}-m)^2+1)+(k_{2}^{2}+\cdots+k_{d}^{2}+m^{2})\right].
			\end{equation*}
			Therefore, it suffices to prove the following (\ref{F1}) and (\ref{F2}) for any $\alpha >0$
			\begin{equation}\label{F1}
				\left|\left\lbrace \sum_{m\in \Z}\frac{|f(m,k_{2},\cdots, k_{d})|}{|4(k_{1}-m)^2-1|}\left[(k_{1}-m)^2+1\right]\ge \alpha \langle k\rangle \right\rbrace\right|\lesssim \|f\|_{\ell^{1,1}(\Z^{d})}/\alpha,
			\end{equation}
			\begin{equation}\label{F2}
				\left|\left\lbrace \sum_{m\in \Z}\frac{|f(m,k_{2},\cdots, k_{d})|}{|4(k_{1}-m)^2-1|}\left[k_{2}^{2}+\cdots+k_{d}^{2}+m^{2}\right]\ge \alpha \langle k\rangle \right\rbrace\right|\lesssim \|f\|_{\ell^{1,1}(\Z^{d})}/\alpha.
			\end{equation}
			For (\ref{F1}), by $\frac{(k_{1}-m)^2+1}{|4(k_{1}-m)^2-1|}\lesssim 1$, it suffices to prove the following statement
			\begin{equation}\label{F3}
				\left|\left\lbrace\sum_{m\in \Z}|f(m,k_{2},\cdots, k_{d})|\ge \alpha \langle k\rangle \right\rbrace\right|\lesssim \|f\|_{\ell^{1,1}(\Z^{d})}/\alpha.
			\end{equation}
			Set $\Gamma(k_{2},\cdots, k_{d}):=\sum_{m\in \Z}|f(m,k_{2},\cdots, k_{d})|.$ We can further decompose LHS of (\ref{F3}) as follows
			\begin{equation*}
				LHS=\sum_{(k_{2},\cdots, k_{d})\in \Z^{d-1}}|\lbrace \Gamma(k_{2},\cdots, k_{d})\ge \alpha \langle k\rangle \rbrace|_{k_{1}},
			\end{equation*}  
			where $A\subseteq \Z^{d}$, $|A|_{k_{1}}$ means the cardinality with respect to the first coordinate $k_{1}$,  for fixed $d-1$ coordinates $(k_{2},\cdots, k_{d}).$ We further have
			\begin{equation*}
				\lesssim \sum_{(k_{2},\cdots, k_{d})\in \Z^{d-1}}\Gamma(k_{2},\cdots, k_{d})/\alpha=\|f\|_{\ell^{1}(\Z^{d})}/\alpha\le \|f\|_{\ell^{1,1}(\Z^{d})}/\alpha,
			\end{equation*}
			as $\langle k\rangle\ge|k_{1}|$ and $\|f\|_{\ell^{1}(\Z^{d})}=\sum_{(k_{2},\cdots, k_{d})\in \Z^{d-1}}\Gamma(k_{2},\cdots, k_{d})$.
			\par 
			Next, we just need to prove (\ref{F2}), and we notice the following fact
			\begin{equation*}
				\sqrt{k_{2}^{2}+\cdots+k_{d}^2+m^{2}}\lesssim \sqrt{k_{1}^{2}+k_{2}^{2}+\cdots+k_{d}^2}+|k_{1}-m|\le \langle k \rangle+|k_{1}-m|.
			\end{equation*}
			Therefore, it suffices to prove following (\ref{F4}) and (\ref{F5})
			\begin{equation}\label{F4}
				\left|\left\lbrace \sum_{m\in \Z}\frac{|f(m,k_{2},\cdots, k_{d})|\sqrt{k_{2}^{2}+\cdots+k_{d}^2+m^{2}}}{|4(k_{1}-m)^2-1|}|k_{1}-m|\ge \alpha\langle k \rangle\right\rbrace\right|\lesssim \|f\|_{\ell^{1,1}(\Z^{d})}/\alpha,
			\end{equation}
			\begin{equation}\label{F5}
				\left|\left\lbrace \sum_{m\in \Z}\frac{|f(m,k_{2},\cdots, k_{d})|\sqrt{k_{2}^{2}+\cdots+k_{d}^2+m^{2}}}{|4(k_{1}-m)^2-1|}\ge \alpha\right\rbrace\right|\lesssim \|f\|_{\ell^{1,1}(\Z^{d})}/\alpha.
			\end{equation}
			For (\ref{F4}), we use the argument in the proof of (\ref{F1}) with $\frac{|k_{1}-m|}{|4(k_{1}-m)^2-1|}\lesssim 1 $. For (\ref{F5}), we write the sequence $(g_{k_{2},\cdots,k_{d}}(m))_{m\in \Z}:=(g(m,k_{2},\cdots,k_{d}))_{m\in\Z}$, for fixed $(k_{2},\cdots,k_{d}).$  In the following, we let $g(m,k_{2},\cdots,k_{d}):=|f(m,k_{2},\cdots, k_{d})|\sqrt{k_{2}^{2}+\cdots+k_{d}^2+m^{2}}$. Then we similarly decompose the LHS of (\ref{F5}) as in (\ref{F3})
			\begin{equation*}
				LHS=\sum_{(k_{2},\cdots, k_{d})\in \Z^{d-1}}\left|\left\lbrace \sum_{m\in \Z}\frac{g(m,k_{2},\cdots,k_{d})}{|4(k_{1}-m)^2-1|}\ge \alpha\right\rbrace\right|_{k1}
			\end{equation*}
			\begin{equation*}
				\lesssim \sum_{(k_{2},\cdots, k_{d})\in \Z^{d-1}}|\lbrace g_{k_{2},\cdots,k_{d}}\ast \varphi_1 \ge \alpha \rbrace|_{k1}\le \sum_{(k_{2},\cdots, k_{d})\in \Z^{d-1}}\|g_{k_{2},\cdots,k_{d}}\ast \varphi_1\|_{\ell^{1}(\Z)}/\alpha
			\end{equation*}
			\begin{equation*}
				\lesssim \sum_{(k_{2},\cdots, k_{d})\in \Z^{d-1}}\|g_{k_{2},\cdots,k_{d}}\|_{\ell^{1}(\Z)}/\alpha\le \|f\|_{\ell^{1,1}(\Z^{d})}/\alpha,
			\end{equation*}
			where the second inequality follows from Chebyshev's inequality, and the third inequality follows from Lemma \ref{Young}.
			\par 
			Now, we have proved the second statement of the theorem. The first statement will be a consequence of the second one. We first check the first statement for $p=\infty$. In the following, we first prove the case $d=1$ and then extend it to general $d$. In this case, we have
			\begin{equation*}
				\langle m \rangle|\partial_1 f(m)|=\left|\sum_{k\in \Z}\langle m \rangle f(k)\varphi(m-k)\right|\lesssim \sum_{k\in \Z}\frac{\langle m \rangle \|f\|_{\ell^{\infty, 1}(\Z)}}{\langle k \rangle  \langle m-k \rangle^{2}}
			\end{equation*}
			\begin{equation*}
				=\sum_{|k|\ge |m|/2}\frac{\langle m \rangle\|f\|_{\ell^{\infty, 1}(\Z)}}{\langle k \rangle  \langle m-k \rangle^{2}}+\sum_{|k|<|m|/2}\frac{\langle m \rangle\|f\|_{\ell^{\infty, 1}(\Z)}}{\langle k \rangle  \langle m-k \rangle^{2}}\equiv (I)+(II).
			\end{equation*}
			For $(I)$, we have  $\langle m \rangle \lesssim \langle k \rangle$, which implies
			\begin{equation*}
				(I)\lesssim \sum_{|k|\ge |m|/2}\frac{\|f\|_{\ell^{\infty, 1}(\Z)}}{\langle m-k \rangle^{2}}\lesssim \|f\|_{\ell^{\infty, 1}(\Z)}.
			\end{equation*}
			For $(II)$, we have $\langle m \rangle^{2}\lesssim \langle m-k \rangle^{2} $, which implies 
			\begin{equation*}
				(II)\lesssim \sum_{|k|<|m|/2}\frac{\|f\|_{\ell^{\infty, 1}(\Z)}}{\langle m \rangle \langle k \rangle}\lesssim \frac{\log(\langle m \rangle)}{\langle m \rangle}\|f\|_{\ell^{\infty, 1}(\Z)}\lesssim \|f\|_{\ell^{\infty, 1}(\Z)}. 
			\end{equation*}
			For general case, by the symmetry, we only need to prove the $\ell^{\infty,1}$-boundedness of $\partial_{1}$. Note that we have the following inequality
			\begin{equation*}
				\frac{\langle (m_{1},\cdots,m_{d}) \rangle}{\langle (k_{1},m_{2},\cdots,m_{d}) \rangle}\lesssim \max\left\lbrace \frac{\langle m_{1}\rangle}{\langle  k_{1}\rangle} ,1 \right\rbrace.
			\end{equation*}
			Applying this inequality, we can prove the $\ell^{\infty,1}$-boundedness of $\partial_{1}$ as in the proof of the case $d=1$.
			\par 
			Note that by definition the $\ell^{p,1}$-boundedness of $\partial_{j}$ is equivalent to the $\ell^{p}$-boundedness of the linear operator $T_{j}$, defined as
			\begin{equation*}
				T_{j}(g)(m):=\langle m \rangle\cdot \partial_{j}\left(\frac{g(m)}{\langle m \rangle}\right), \forall m\in\Z^{d}, j=1,\cdots,d.
			\end{equation*}
			Therefore, applying  Lemma \ref{interpolation} to the operator $T_{j}$, we prove the desired $\ell^{p,1}$-boundedness of the discrete partial derivative.  
		\end{proof}
		\begin{rem}
			The weak $\ell^{1,1}$-boundedness of the discrete partial derivative is sharp. In fact, we can choose $f=\delta_{0}$ and by simple calculation we show the unboundedness of $\|\partial_j f\|_{\ell^{1,1}(\Z^{d})}$ and the boundedness of $\| f\|_{\ell^{1,1}(\Z^{d})}$. This yields that the $\ell^{1,1}$-boundedness doesn't hold.
		\end{rem}
		\section{Energy estimates of discrete nonlinear wave equations}
		In this section, we study energy estimates of discrete wave equations. The conservation of the energy follows from the Noether theorem and Killing vector field theory \cite{11,17} without any surprise, since  discrete wave equations still have time translation invariance. 
		\par We first derive energy conservation of homogeneous linear discrete wave equation
		\begin{equation}\label{hwe1}
			\left\{
			\begin{aligned}
				& \partial_{t}^2 u(x,t) - \Delta u(x,t) = 0, \\
				& u(x,0) = f(x),\quad\partial_t u(x,0) = g(x),\quad (x,t)\in\Z^d\times \R.
			\end{aligned}
			\right.
		\end{equation}
		For the solution $u\in C^{2}([0,T];\ell^{2}(\Z^{d})),$ we define the energy as $$E_{disc}[t]:=\frac{1}{2}\sum_{k\in \Z^{d}}(|\partial_{t}u(k,t)|^{2}+|\partial u(k,t)|^{2}).$$	
		Since for any $v\in \ell^2(\Z^d)$, we have the following identity
		\begin{equation*}
			\sum_k |\partial v(k)|^{2}=\sum_k|D v(k)|^2,
		\end{equation*}
		the energy can be rewritten as
		$$E_{disc}[t]=\frac{1}{2}\sum_{k\in \Z^{d}}(|\partial_{t}u(k,t)|^{2}+|D u(k,t)|^{2}).$$

		\begin{thm}\label{ec1}
			
			For the solution $u\in C^{2}([0,T];\ell^{2}(\Z^{d}))$ of \eqref{hwe1}, we have the energy conservation
			\begin{equation*}
				E_{disc}[t]=E_{disc}[0],\quad \forall t\in[0,T].
			\end{equation*}
		\end{thm}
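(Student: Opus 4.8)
The plan is to show $\frac{d}{dt}E_{disc}[t]\equiv 0$ by differentiating in time and invoking the skew-adjointness of $\partial_j$ recorded in Theorem~\ref{integration by part}. First I would collect the regularity that makes the computation legitimate. Since $u\in C^{2}([0,T];\ell^{2}(\Z^{d}))$ and each $\partial_{j}$ is a bounded linear operator on $\ell^{2}(\Z^{d})$ (being convolution with $\varphi_{j}\in\ell^{1}(\Z^{d})$), the maps $t\mapsto \partial_{t}u(\cdot,t)$, $t\mapsto \partial_{t}^{2}u(\cdot,t)$ and $t\mapsto \partial_{j}u(\cdot,t)$ are all continuous $\ell^{2}(\Z^{d})$-valued maps; moreover $t\mapsto\partial_{j}u(\cdot,t)$ is $C^{1}$ with $\frac{d}{dt}\partial_{j}u(\cdot,t)=\partial_{j}\partial_{t}u(\cdot,t)$, since a bounded linear operator commutes with the strong (Fr\'echet) time derivative. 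Writing $\langle f,g\rangle:=\sum_{k}f(k)\overline{g(k)}$ for the $\ell^{2}$ inner product and using the elementary identity $\frac{d}{dt}\|v(t)\|_{\ell^{2}}^{2}=2\,\mathrm{Re}\langle v'(t),v(t)\rangle$ for $C^{1}$ curves $v$ in $\ell^{2}(\Z^{d})$, I would rewrite
\begin{equation*}
E_{disc}[t]=\tfrac12\|\partial_{t}u(\cdot,t)\|_{\ell^{2}}^{2}+\tfrac12\sum_{j=1}^{d}\|\partial_{j}u(\cdot,t)\|_{\ell^{2}}^{2}
\end{equation*}
and differentiate term by term to get
\begin{equation*}
\frac{d}{dt}E_{disc}[t]=\mathrm{Re}\langle\partial_{t}^{2}u,\partial_{t}u\rangle+\sum_{j=1}^{d}\mathrm{Re}\langle\partial_{j}\partial_{t}u,\partial_{j}u\rangle .
\end{equation*}

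Next I would move the $\partial_{j}$ off the first factor in each summand. By Theorem~\ref{integration by part} applied to $\partial_{t}u\in\ell^{2}(\Z^{d})$ and $\partial_{j}u\in\ell^{2}(\Z^{d})$,
\begin{equation*}
\langle\partial_{j}\partial_{t}u,\partial_{j}u\rangle=-\langle\partial_{t}u,\partial_{j}\partial_{j}u\rangle ,
\end{equation*}
so summing over $j$ together with $\Delta=\sum_{j=1}^{d}\partial_{j}\circ\partial_{j}$ gives $\sum_{j}\langle\partial_{j}\partial_{t}u,\partial_{j}u\rangle=-\langle\partial_{t}u,\Delta u\rangle$. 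Since $\mathrm{Re}\langle\partial_{t}u,\Delta u\rangle=\mathrm{Re}\langle\Delta u,\partial_{t}u\rangle$, we arrive at
\begin{equation*}
\frac{d}{dt}E_{disc}[t]=\mathrm{Re}\langle\partial_{t}^{2}u-\Delta u,\partial_{t}u\rangle=0
\end{equation*}
by the equation \eqref{hwe1}, and integrating from $0$ to $t$ yields $E_{disc}[t]=E_{disc}[0]$.

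The only genuinely delicate point is the functional-analytic bookkeeping in the first paragraph: that $t\mapsto\partial_{j}u(\cdot,t)$ is differentiable as an $\ell^{2}$-valued curve with derivative $\partial_{j}\partial_{t}u$, and that squared norms of such curves may be differentiated as claimed — this is precisely where the $\ell^{2}$-boundedness of $\partial_{j}$ (proven above) enters, replacing the naive ``differentiation under the infinite sum'' that one would otherwise justify by dominated convergence. Everything else is a one-line computation. As an alternative I would note that the statement also follows on the Fourier side: $v(x,t):=\mathcal{F}(u(\cdot,t))(x)$ solves the pointwise ODE $\partial_{t}^{2}v=-4\big(\sum_{j}\sin^{2}(x_{j}/2)\big)v$, so $\tfrac12\big(|\partial_{t}v|^{2}+4\sum_{j}\sin^{2}(x_{j}/2)\,|v|^{2}\big)$ is conserved for each fixed $x$, and integrating over $\mathbb{T}^{d}$ and applying Plancherel (the multiplier of $\partial_{j}$ being $2i\sin(x_{j}/2)$) recovers $E_{disc}[t]=E_{disc}[0]$.
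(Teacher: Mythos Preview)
Your proof is correct. The paper, however, takes as its primary argument the Fourier-side computation that you relegate to the final paragraph: it transforms the energy to $\mathbb{T}^{d}$, differentiates in $t$, and uses the pointwise ODE $\partial_{t}^{2}\mathcal{F}(u)+4\sin^{2}(x/2)\,\mathcal{F}(u)=0$ to see that the integrand has zero time derivative. Your main argument instead works directly in $\ell^{2}(\Z^{d})$ via the skew-adjointness of $\partial_{j}$; this is precisely the method the paper employs later, in Theorem~\ref{www}, to obtain the sharper inhomogeneous estimate $\|u'(\cdot,t)\|_{\ell^{2}}\le\|u'(\cdot,0)\|_{\ell^{2}}+\int_{0}^{t}\|\Box u(\cdot,s)\|_{\ell^{2}}\,ds$. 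So the two routes are dual: the Fourier argument is the shortest path to conservation for the constant-coefficient equation, while your physical-space argument is more robust and already anticipates the generalization to $\Box u\neq 0$.
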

		\begin{proof}For simplicity, we only prove the case $d=1$ since the general case is similar. As the energy $E_{disc}[t]$ is of $\ell^{2}$-type, we apply the discrete Fourier transform, and it's equivalent to prove the following
			\begin{equation}\label{e1}
				\frac{1}{4\pi}\int_{0}^{2\pi}|\partial_{t}(\mathcal{F}(u)(x,t))|^{2}+\left|2i \sin\left(\frac{x}{2}\right)\right|^{2}\cdot |\mathcal{F}(u)(x,t)|^{2} dx \equiv C.
			\end{equation}
			Then, we differentiate (\ref{e1}) with respect to time $t$ and get
			\begin{equation*}
				\frac{1}{4\pi}\int_{0}^{2\pi}\partial_{t}^{2}(\mathcal{F}(u))\overline{\partial_{t}(\mathcal{F}(u))}+\partial_{t}(\mathcal{F}(u))\overline{\partial_{t}^{2}(\mathcal{F}(u))}+4\sin^{2}\left(\frac{x}{2}\right)\left[\partial_{t}(\mathcal{F}(u))\overline{\mathcal{F}(u)}+\mathcal{F}(u)\overline{(\partial_{t}\mathcal{F}(u))}\right]dx.
			\end{equation*}
			Note that by applying the discrete Fourier transform on the equation (\ref{hwe1}),  $$\partial_{t}^{2}\mathcal{F}(u)(x,t)+4\sin^{2}\left(\frac{x}{2}\right)\cdot \mathcal{F}(u)(x,t)=0.$$ Hence we deduce that (\ref{e1}) is constant, meaning that the energy $E_{disc}[t]$ is conserved.\end{proof}
		\begin{rem}
			The conservation of the energy $E_{disc}[t]$ indicates that the discrete partial derivative will play an important role in the analysis of wave equations.
		\end{rem}
		
		We study the energy conservation for the following discrete semilinear wave equation
		\begin{equation}\label{dfe1}
			\left\{
			\begin{aligned}
				& \partial_{t}^2 u(x,t) - \Delta u(x,t) = \mu |u|^{p-1}u,\ \mu=\pm 1, \\
				& u(x,0) = f(x),\quad\partial_t u(x,0) = g(x),\quad (x,t)\in\Z^d\times \R.
			\end{aligned}
			\right.
		\end{equation}
		We define the energy as 
		$$E_{disc}^{NLW}[t]:=\frac{1}{2}\left(\sum_{k\in \Z^{d}}(|\partial_{t}u(k,t)|^{2}+|\partial u(k,t)|^{2})-\frac{2\mu |u(k,t)|^{p+1}}{p+1}\right).$$
		\begin{thm}\label{ec2}
			For the solution $u\in C^{2}([0,T];\ell^{2}(\Z^{d}))$ of \eqref{dfe1}, we have the energy conservation
			\begin{equation*}
				E_{disc}^{NLW}[t]=E_{disc}^{NLW}[0],\quad \forall t\in[0,T].
			\end{equation*} 
			
		\end{thm}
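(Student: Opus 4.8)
The plan is to follow the proof of Theorem~\ref{ec1}, but to compute directly on $\Z^d$ rather than passing to the Fourier side: the quadratic part of $E^{NLW}_{disc}$ transforms nicely under $\mathcal{F}$, but the nonlinearity $\mu|u|^{p-1}u$ does not, so it is cleaner to differentiate the energy in time in physical space and to use the integration by parts identity of Theorem~\ref{integration by part} in place of the missing product rule for $\partial_j$.

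First I would record the regularity facts needed to differentiate the three infinite sums defining $E^{NLW}_{disc}[t]$ term by term. Since $u\in C^{2}([0,T];\ell^{2}(\Z^{d}))$ and each $\partial_j$ is a bounded operator on $\ell^{2}(\Z^{d})$ (Section~2), the maps $t\mapsto \partial_t u(\cdot,t)$ and $t\mapsto \partial_j u(\cdot,t)$ are $C^1$ from $[0,T]$ into $\ell^{2}(\Z^{d})$. Together with the embedding $\ell^{2}(\Z^{d})\hookrightarrow \ell^{\infty}(\Z^{d})$, which gives $\sup_{t\in[0,T]}\|u(\cdot,t)\|_{\ell^{\infty}}<\infty$, this shows that $t\mapsto |u(\cdot,t)|^{p+1}$ is a $C^1$ curve into $\ell^{1}(\Z^{d})$ with $\partial_t |u|^{p+1}=(p+1)|u|^{p-1}\,\mathrm{Re}(\bar u\,\partial_t u)$, so all three sums may be differentiated under the summation sign.

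Differentiating, the kinetic term contributes $\mathrm{Re}\sum_k \overline{\partial_t u}\,\partial_t^2 u$ and the potential term contributes $-\mu\sum_k |u|^{p-1}\,\mathrm{Re}(\bar u\,\partial_t u)$. For the gradient term I would write $\tfrac12\frac{d}{dt}\sum_k |\partial u(k,t)|^{2}=\mathrm{Re}\sum_k\sum_{j=1}^{d}\partial_j(\partial_t u)(k)\,\overline{\partial_j u(k)}$ and apply Theorem~\ref{integration by part} to move one $\partial_j$ onto the second factor, turning it into $-\mathrm{Re}\sum_k \partial_t u(k)\,\overline{\partial_{jj}u(k)}$; summing over $j$ and using $\Delta=\sum_{j=1}^{d}\partial_{jj}$ (cf. Proposition~\ref{dispartial1}) this equals $-\mathrm{Re}\sum_k \overline{\partial_t u}\,\Delta u$. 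Adding the three contributions,
\[
\frac{d}{dt}E^{NLW}_{disc}[t]=\mathrm{Re}\sum_{k\in\Z^{d}}\overline{\partial_t u(k,t)}\bigl(\partial_t^2 u(k,t)-\Delta u(k,t)-\mu|u(k,t)|^{p-1}u(k,t)\bigr)=0
\]
by equation~\eqref{dfe1}, and integrating in $t$ gives $E^{NLW}_{disc}[t]=E^{NLW}_{disc}[0]$ for all $t\in[0,T]$. When $u$ is real-valued, as for real data by uniqueness in Theorem~\ref{Th1}, all the $\mathrm{Re}$'s are superfluous.

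I expect the only delicate step to be the justification of the term-by-term differentiation, in particular that $t\mapsto |u(\cdot,t)|^{p+1}$ is a $C^1$ curve into $\ell^{1}(\Z^{d})$ (which uses $p\ge1$, the embedding $\ell^{2}\hookrightarrow\ell^{\infty}$, and a dominated-convergence estimate) and the legitimacy of inserting the time-dependent $\ell^{2}$-functions $\partial_t u(\cdot,t),\partial_j u(\cdot,t)$ into the bilinear identity of Theorem~\ref{integration by part}. Everything else is a short, direct calculation mirroring the linear case.
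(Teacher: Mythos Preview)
Your proof is correct. The paper itself omits the argument entirely, saying only that it is ``similar'' to the linear case Theorem~\ref{ec1}, which was carried out on the Fourier side. Your choice to stay in physical space and use the integration by parts formula of Theorem~\ref{integration by part} is a genuinely different (and arguably more appropriate) route: as you point out, the nonlinearity $\mu|u|^{p-1}u$ has no clean expression after $\mathcal{F}$, so a literal repetition of the Fourier-side computation from Theorem~\ref{ec1} would be awkward, whereas your approach handles the potential term by a direct pointwise identity. In fact, your treatment of the quadratic part of the energy is exactly the computation the paper carries out later in Theorem~\ref{www}, so your proposal is essentially a synthesis of Theorem~\ref{www} with the extra potential-term bookkeeping. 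The regularity justifications you sketch (boundedness of $\partial_j$ on $\ell^2$, the embedding $\ell^2\hookrightarrow\ell^\infty$ giving $|u|^{p+1}\in C^1([0,T];\ell^1)$) are the right ones and are straightforward.
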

		\begin{proof}
			The proof is similar with the proof of Theorem \ref{ec1}, hence we omit it.
		\end{proof}
		\par 
		Next, we derive some useful energy estimates, which will play important roles in our main result on the well-posedness of discrete nonlinear wave equations.
		\begin{thm}\label{energy estimate1}
			For inhomogeneous linear discrete wave equation 
			\begin{equation}\label{ihe1}
				\left\{
				\begin{aligned}
					& \partial_{t}^2 u(x,t) - \Delta u(x,t) = F(x,t), \\
					& u(x,0) = f(x),\quad\partial_t u(x,0) = g(x),\quad (x,t)\in\Z^d\times \R,
				\end{aligned}
				\right.
			\end{equation}
			if $u\in C^{2}([0,T];\ell^{2}(\Z^{d}))$ is the solution of (\ref{ihe1}) and $f,g\in \ell^{2,k}(\Z^{d})$, $F\in L^{1}([0,T];\ell^{2,k}(\Z^{d})),$ for $k=0,1$, then we have the explicitly time-dependent energy estimate, $\forall t\in[0,T],$
			\begin{equation}\label{ee1}
				\|u(\cdot,t)\|_{\ell^{2,k}(\Z^{d})}+\|\partial_{t}u(\cdot,t)\|_{\ell^{2,k}(\Z^{d})}\lesssim (1+|t|^{k+1})\left[\|f\|_{\ell^{2,k}(\Z^{d})}+\|g\|_{\ell^{2,k}(\Z^{d})}+\int_{0}^{t}\|F(\cdot,s)\|_{\ell^{2,k}(\Z^{d})}ds\right].
			\end{equation}
			
		\end{thm}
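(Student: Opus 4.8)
The plan is to pass to the Fourier side and exploit an explicit Duhamel representation. Writing $\omega(x):=2\bigl(\sum_{j=1}^{d}\sin^{2}(x_{j}/2)\bigr)^{1/2}$, so that the Fourier multiplier of $-\Delta$ is $\omega^{2}$, applying $\mathcal{F}$ to \eqref{ihe1} turns it into the $x$-parametrized ODE $\partial_{t}^{2}\widehat u(x,t)+\omega(x)^{2}\widehat u(x,t)=\widehat F(x,t)$ with $\widehat u(x,0)=\widehat f(x)$, $\partial_{t}\widehat u(x,0)=\widehat g(x)$, whose solution by Duhamel's formula is
\begin{equation*}
\widehat u(x,t)=\cos(\omega t)\,\widehat f(x)+\frac{\sin(\omega t)}{\omega}\,\widehat g(x)+\int_{0}^{t}\frac{\sin(\omega(t-s))}{\omega}\,\widehat F(x,s)\,ds .
\end{equation*}
That $\widehat u$ really equals this expression is justified by $\mathcal{F}$ being an isometric isomorphism $\ell^{2}(\Z^{d})\to L^{2}(\mathbb{T}^{d})$ together with the uniqueness furnished by the energy identity in Theorem~\ref{ec1}. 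By Theorem~\ref{iso} one has $\|v\|_{\ell^{2,k}(\Z^{d})}\approx\|\mathcal{F}(v)\|_{H^{k}(\mathbb{T}^{d})}$, so it suffices to estimate $\|\widehat u(\cdot,t)\|_{H^{k}(\mathbb{T}^{d})}$ and $\|\partial_{t}\widehat u(\cdot,t)\|_{H^{k}(\mathbb{T}^{d})}$ for $k=0$ and $k=1$ (by time reversal we may take $t\ge 0$).

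For $k=0$ I would use only the pointwise bounds $|\cos(\omega t)|\le 1$, $|\omega^{-1}\sin(\omega t)|\le|t|$ and $\omega\le 2\sqrt{d}$: from the Duhamel formula, $|\widehat u(x,t)|\le|\widehat f(x)|+|t|\,|\widehat g(x)|+|t|\int_{0}^{t}|\widehat F(x,s)|\,ds$ and $|\partial_{t}\widehat u(x,t)|\le 2\sqrt{d}\,|\widehat f(x)|+|\widehat g(x)|+\int_{0}^{t}|\widehat F(x,s)|\,ds$; taking $L^{2}(\mathbb{T}^{d})$ norms, using Minkowski's integral inequality on the source term, and returning via Theorem~\ref{iso} gives \eqref{ee1} with $k=0$, even with the sharper factor $(1+|t|)$. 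Equivalently this case follows from the classical energy argument: differentiating $E[t]:=\|\partial_{t}u(\cdot,t)\|_{\ell^{2}}^{2}+\|\partial u(\cdot,t)\|_{\ell^{2}}^{2}$ in $t$ and using the skew-adjointness of $\partial_{j}$ from Theorem~\ref{integration by part} yields $\frac{d}{dt}\sqrt{E[t]}\le\|F(\cdot,t)\|_{\ell^{2}}$, and then $u(\cdot,t)=f+\int_{0}^{t}\partial_{t}u(\cdot,s)\,ds$ produces the extra factor $|t|$ on $\|u(\cdot,t)\|_{\ell^{2}}$.

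For $k=1$ the one genuinely new ingredient is the elementary bound $|\partial_{x_{\ell}}\omega|\le 1$ (valid a.e.), which follows from $2\omega\,\partial_{x_{\ell}}\omega=\partial_{x_{\ell}}\omega^{2}=2\sin x_{\ell}$ and $\omega\ge 2|\sin(x_{\ell}/2)|$. Although $\omega$ is only Lipschitz near the origin, the functions $x\mapsto\cos(\omega(x)t)$ and $x\mapsto\omega(x)^{-1}\sin(\omega(x)t)$ are smooth on $\mathbb{T}^{d}$, being entire functions of the trigonometric polynomial $t^{2}\omega(x)^{2}$, so differentiating the Duhamel formula in $x_{\ell}$ is legitimate. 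Doing so and applying the chain rule produces terms of the shape $(\partial_{\omega}\Phi)\,(\partial_{x_{\ell}}\omega)\,\widehat h$ and $\Phi\,\partial_{x_{\ell}}\widehat h$ with $\Phi\in\{\cos(\omega t),\ \omega^{-1}\sin(\omega t),\ \omega^{-1}\sin(\omega(t-s))\}$ and $\widehat h\in\{\widehat f,\widehat g,\widehat F(\cdot,s)\}$; since $|\partial_{\omega}\cos(\omega t)|=|t\sin(\omega t)|\le|t|$ and $|\partial_{\omega}(\omega^{-1}\sin(\omega t))|=\bigl|\tfrac{t\omega\cos(\omega t)-\sin(\omega t)}{\omega^{2}}\bigr|\lesssim t^{2}$ (this is $t^{2}$ times the bounded derivative of $z\mapsto(\sin z)/z$), one gets a pointwise bound on $|\partial_{x_{\ell}}\widehat u(x,t)|$ by $(1+t^{2})$ times $|\widehat f(x)|+|\partial_{x_{\ell}}\widehat f(x)|+|\widehat g(x)|+|\partial_{x_{\ell}}\widehat g(x)|+\int_{0}^{t}(|\widehat F(x,s)|+|\partial_{x_{\ell}}\widehat F(x,s)|)\,ds$, and an analogous bound for $\partial_{x_{\ell}}\partial_{t}\widehat u$ (now also using $|\partial_{x_{\ell}}(\omega\sin(\omega t))|\lesssim 1+t^{2}$, proved the same way). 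Taking $L^{2}(\mathbb{T}^{d})$ norms, summing over $\ell$, combining with the $k=0$ estimates for the undifferentiated parts, and translating back with Theorem~\ref{iso} gives \eqref{ee1} for $k=1$.

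The only point requiring care is the low-frequency region $x\to 0$ where $\omega\to 0$: this is exactly where $\omega^{-1}\sin(\omega t)$ and its $x$-derivative degenerate, and it is the mechanism producing the growth factor $(1+|t|^{k+1})$. Once one records that these multipliers are nevertheless smooth in $x$ and keeps track of the elementary bounds $|\omega^{-1}\sin(\omega t)|\le|t|$, $|\partial_{\omega}(\omega^{-1}\sin(\omega t))|\lesssim t^{2}$ and $|\partial_{x_{\ell}}\omega|\le 1$, the remainder is routine bookkeeping of the powers of $t$; I expect no serious obstacle beyond this careful low-frequency accounting.
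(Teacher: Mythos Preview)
Your proposal is correct and follows essentially the same approach as the paper: pass to the Fourier side via Duhamel's formula, use Theorem~\ref{iso} to translate $\ell^{2,k}$ into $H^{k}(\mathbb{T}^{d})$, and control the multipliers $\cos(\omega t)$, $\omega^{-1}\sin(\omega t)$ and (for $k=1$) their $x$-derivatives through the elementary bounds $|\omega^{-1}\sin(\omega t)|\le|t|$ and $|\partial_{x_{\ell}}\omega|\le 1$. Your write-up is in fact more careful than the paper's at the low-frequency point $\omega=0$ (noting that the multipliers are entire in $t^{2}\omega^{2}$) and makes the $t^{2}$ growth of $\partial_{\omega}(\omega^{-1}\sin(\omega t))$ explicit, whereas the paper simply records that $H(y)=y^{-1}\sin(ty)$ is smooth and leaves the remaining bookkeeping to the reader.
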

		\begin{proof}
			By Duhamel's formula, we have 
			\begin{equation*}
				\mathcal{F}(u)(x,t)=\mathcal{F}(f)(x)\cdot \cos(K(x)t)+\mathcal{F}(g)(x)\frac{\sin(K(x)t)}{K(x)}+\int_{0}^{t}\frac{\sin(K(x)(t-s))}{K(x)}\mathcal{F}(F)(x,s)ds,
			\end{equation*}
			where $K(x)=\sqrt{\sum_{j=1}^{d}2-2\cos(x_{j})}=2\sqrt{\sum_{j=1}^{d}\sin^{2}(\frac{x_{j}}{2})}$.
			\par 
			For the case $k=0$, we apply the discrete Fourier transform on (\ref{ee1}) and get
			\begin{equation*}
				LHS=\|\mathcal{F}(u)(\cdot,t)\|_{L^{2}(\mathbb{T}^{d})}+\|\partial_{t}(\mathcal{F}(u))(\cdot,t)\|_{L^{2}(\mathbb{T}^{d})}.
			\end{equation*}
			We only deal with the estimate for the term $\|\mathcal{F}(u)(\cdot,t)\|_{L^{2}(\mathbb{T}^{d})},$ since the estimate for  $\|\partial_{t}(\mathcal{F}(u))(\cdot,t)\|_{L^{2}(\mathbb{T}^{d})}$ is similar. For $\|\mathcal{F}(u)(\cdot,t)\|_{L^{2}(\mathbb{T}^{d})}$, we have
			\begin{equation*}
				\|\mathcal{F}(u)(\cdot,t)\|_{L^{2}(\mathbb{T}^{d})}\le \|\mathcal{F}(f)(\cdot)\cdot \cos(K(\cdot)t)\|_{L^{2}(\mathbb{T}^{d})}+\left\|\mathcal{F}(g)(\cdot)\frac{\sin(K(\cdot)t)}{K(x)}\right\|_{L^{2}(\mathbb{T}^{d})}
			\end{equation*}
			\begin{equation*}
				+\left\|\int_{0}^{t}\frac{\sin(K(\cdot)(t-s))}{K(\cdot)}\mathcal{F}(F)(\cdot,s)ds\right\|_{L^{2}(\mathbb{T}^{d})}.
			\end{equation*}
			By the observation that $\left|\frac{\sin(K(x)t)}{K(x)}\right|\le t$ and the Minkowski inequality, we derive the following
			\begin{equation*}
				\le \|\mathcal{F}(f)(\cdot)\|_{L^{2}(\mathbb{T}^{d})}+t\|\mathcal{F}(g)(\cdot)\|_{L^{2}(\mathbb{T}^{d})}+t\int_{0}^{t}\|\mathcal{F}(F)(\cdot,s)\|_{L^{2}(\mathbb{T}^{d})}ds
			\end{equation*}
			\begin{equation*}
				\lesssim (1+|t|)\left[\|f\|_{\ell^{2}(\Z^{d})}+\|g\|_{\ell^{2}(\Z^{d})}+\int_{0}^{t}\|F(\cdot,s)\|_{\ell^{2}(\Z^{d})}ds\right].
			\end{equation*}
			\par
			For the case $k=1$, we still apply the discrete Fourier transform on (\ref{ee1}) and get
			\begin{equation*}
				LHS\approx\|\mathcal{F}(u)(\cdot,t)\|_{H^{1}(\mathbb{T}^{d})}+\|\partial_{t}\mathcal{F}(u)(\cdot,t)\|_{H^{1}(\mathbb{T}^{d})}.
			\end{equation*}
			By direct calculation, we have
			\begin{equation*}
				|\partial_{x_j}K(x)|=\left|\frac{\sin(\frac{x_{j}}{2})\cos(\frac{x_{j}}{2})}{\sqrt{\sum_{j=1}^{d}\sin^{2}(\frac{x_{j}}{2})}}\right|\le 1,
			\end{equation*}
			\begin{equation*}
				\left|\partial_{x_j}\frac{\sin(K(x)t)}{K(x)}\right|=|H'(K(x))\cdot \partial_{x_j}K(x)|\le |H'(K(x))|,
			\end{equation*}
			where $H(y):=\frac{\sin(ty)}{y}$ is a smooth function. Then the estimate of the second case is similar with the first case, and hence we omit it.
		\end{proof} 
		\begin{rem}
			The reason why we call Theorem \ref{energy estimate1} the explicitly time-dependent energy estimate is that the coefficient $1+|t|^{k+1}$ is related to time variable $t$. Besides, it's worthy to remember that this energy estimate is only suitable for the traditional case, which means the equation involves $\Delta$ purely.
		\end{rem}
		Next, we derive an implicitly time-dependent energy estimate that applies in the generalized case where the coefficients of second-order discrete partial derivatives are functions. However, this generalized energy estimate introduces an additional exponential term.
		\begin{thm}\label{energy estimate2}
			For a generalized d'Alembert operator $\Box_{g}:=\partial_{t}^{2}-g^{jk}\partial_{jk}$, we have the following energy estimate of $u\in C^{2}([0,T];\ell^{2,k}(\Z^{d}))$, with $g^{jk}\in L^1([0,T];\ell^{\infty}(\Z^{d}))$, $C>0$, $k=0,1$, $\forall t\in [0,T]$
			\begin{equation*}
				\|u(\cdot,t)\|_{\ell^{2,k}(\Z^{d})}+\|\partial_{t}u(\cdot,t)\|_{\ell^{2,k}(\Z^{d})}\lesssim\left(\|u(\cdot,0)\|_{\ell^{2,k}(\Z^{d})}+\|\partial_{t}u(\cdot,0)\|_{\ell^{2,k}(\Z^{d})}+\int_{0}^{t}\|\Box_{g}u(\cdot,s)\|_{\ell^{2,k}(\Z^{d})}ds\right)
			\end{equation*}
			\begin{equation*}
				\times exp\left(C\int_{0}^{t}\left(\sum_{j,k=1}^{d}\|g^{jk}(\cdot,s)\|_{\ell^{\infty}(\Z^{d})}+1\right)ds\right).
			\end{equation*}
		\end{thm}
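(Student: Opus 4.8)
The plan is to close a Gr\"onwall argument on the \emph{flat} energy; here and below $k\in\{0,1\}$ denotes the weight exponent, and I rename the contracted indices of $g^{jk}\partial_{jk}$ as $j,l$ to avoid a notational clash, so that $\Box_g=\partial_t^2-g^{jl}\partial_{jl}$. Put
\begin{equation*}
	E_k[t]:=\tfrac12\left(\|\partial_t u(\cdot,t)\|_{\ell^{2,k}(\Z^d)}^{2}+\|u(\cdot,t)\|_{\ell^{2,k}(\Z^d)}^{2}\right),
\end{equation*}
and observe that $E_k[t]^{1/2}\approx\|u(\cdot,t)\|_{\ell^{2,k}(\Z^d)}+\|\partial_t u(\cdot,t)\|_{\ell^{2,k}(\Z^d)}$, so it suffices to bound $E_k[t]^{1/2}$. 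Since $\ell^{2,k}(\Z^d)$ is a (weighted) Hilbert space with inner product $\langle f,h\rangle_{\ell^{2,k}}=\sum_{m\in\Z^d}\langle m\rangle^{2k}f(m)\overline{h(m)}$ and $u\in C^{2}([0,T];\ell^{2,k}(\Z^d))$, the map $t\mapsto E_k[t]$ is $C^1$ with $\tfrac{d}{dt}E_k[t]=\mathrm{Re}\,\langle\partial_t u,\partial_t^{2}u\rangle_{\ell^{2,k}}+\mathrm{Re}\,\langle\partial_t u,u\rangle_{\ell^{2,k}}$. Substituting $\partial_t^{2}u=\Box_g u+g^{jl}\partial_{jl}u$ and applying the Cauchy--Schwarz inequality in $\ell^{2,k}(\Z^d)$ gives
\begin{equation*}
	\left|\tfrac{d}{dt}E_k[t]\right|\le\|\partial_t u(\cdot,t)\|_{\ell^{2,k}}\left(\|\Box_g u(\cdot,t)\|_{\ell^{2,k}}+\|u(\cdot,t)\|_{\ell^{2,k}}+\sum_{j,l=1}^{d}\|g^{jl}(\cdot,t)\|_{\ell^\infty}\|\partial_{jl}u(\cdot,t)\|_{\ell^{2,k}}\right).
\end{equation*}

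The decisive point — and the place where the discrete theory departs from the continuum — is that the second-order term $\partial_{jl}u$ costs nothing: composing the $\ell^{2}$-boundedness of $\partial_j$ when $k=0$, and the $\ell^{2,1}$-boundedness of Theorem~\ref{bdn-p} with $p=2$ when $k=1$, we get
\begin{equation*}
	\|\partial_{jl}u(\cdot,t)\|_{\ell^{2,k}}=\|\partial_j\partial_l u(\cdot,t)\|_{\ell^{2,k}}\lesssim\|\partial_l u(\cdot,t)\|_{\ell^{2,k}}\lesssim\|u(\cdot,t)\|_{\ell^{2,k}},\qquad k=0,1,
\end{equation*}
with constants depending only on $d$. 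Consequently no ellipticity or positivity assumption on $(g^{jl})$ is needed — the usual continuum device of integrating by parts after placing $g^{jl}$ in a positive-definite position (which in the continuum forces $g^{jl}$ to be differentiable and uniformly elliptic) is entirely bypassed; and it is precisely this step that pins the estimate to $k\le1$, since the weighted boundedness of $\partial_j$ on $\ell^{2,\alpha}(\Z^d)$ fails for $\alpha>1$. Inserting this bound and using $\|\partial_t u(\cdot,t)\|_{\ell^{2,k}},\,\|u(\cdot,t)\|_{\ell^{2,k}}\le\sqrt{2E_k[t]}$ yields
\begin{equation*}
	\left|\tfrac{d}{dt}E_k[t]\right|\le C\sqrt{E_k[t]}\,\|\Box_g u(\cdot,t)\|_{\ell^{2,k}}+C\Bigl(1+\sum_{j,l=1}^{d}\|g^{jl}(\cdot,t)\|_{\ell^\infty}\Bigr)E_k[t],\qquad C=C(d).
\end{equation*}

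To remove the square root I would set $e_k[t]:=(E_k[t]+\varepsilon)^{1/2}$ with $\varepsilon>0$; dividing the last inequality by $2(E_k[t]+\varepsilon)^{1/2}$ and using $\sqrt{E_k[t]}\le e_k[t]$ together with $E_k[t]/(E_k[t]+\varepsilon)^{1/2}\le e_k[t]$ gives the linear differential inequality
\begin{equation*}
	\tfrac{d}{dt}e_k[t]\le C\|\Box_g u(\cdot,t)\|_{\ell^{2,k}}+C\Bigl(1+\sum_{j,l=1}^{d}\|g^{jl}(\cdot,t)\|_{\ell^\infty}\Bigr)e_k[t].
\end{equation*}
The coefficients on the right lie in $L^{1}([0,T])$ by the hypothesis $g^{jk}\in L^{1}([0,T];\ell^{\infty}(\Z^d))$, and the same operator bound shows $\Box_g u\in L^{1}([0,T];\ell^{2,k}(\Z^d))$, so Gr\"onwall's inequality in integral form applies and gives
\begin{equation*}
	e_k[t]\le\left(e_k[0]+C\int_0^{t}\|\Box_g u(\cdot,s)\|_{\ell^{2,k}}\,ds\right)\exp\left(C\int_0^{t}\Bigl(1+\sum_{j,l=1}^{d}\|g^{jl}(\cdot,s)\|_{\ell^\infty}\Bigr)ds\right).
\end{equation*}
Letting $\varepsilon\to0^{+}$ and using $E_k[s]^{1/2}\approx\|u(\cdot,s)\|_{\ell^{2,k}}+\|\partial_t u(\cdot,s)\|_{\ell^{2,k}}$ for $s=0$ and $s=t$ yields the asserted estimate. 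I expect no substantial obstacle: the only points needing care — the differentiation formula for $E_k[t]$ and the $L^{1}$-in-$t$ integrability of the right-hand sides — follow at once from $u\in C^{2}([0,T];\ell^{2,k}(\Z^d))$, the Hilbert structure of $\ell^{2,k}(\Z^d)$, and $g^{jk}\in L^{1}([0,T];\ell^{\infty}(\Z^d))$, while the genuine engine of the argument is the derivative-free bound for $g^{jl}\partial_{jl}u$ supplied by Theorem~\ref{bdn-p}.
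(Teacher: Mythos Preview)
Your proof is correct and follows essentially the same strategy as the paper's: define the flat energy $\|u\|_{\ell^{2,k}}^2+\|\partial_t u\|_{\ell^{2,k}}^2$, differentiate, substitute $\partial_t^2 u=\Box_g u+g^{jl}\partial_{jl}u$, absorb the second-order term via the $\ell^{2,k}$-boundedness of $\partial_j$, then divide by $E^{1/2}$ and apply Gr\"onwall. Your version is in fact slightly more careful than the paper's sketch --- you make the appeal to Theorem~\ref{bdn-p} for $k=1$ explicit (the paper only says ``similar, omit'') and you handle the possible vanishing of $E_k[t]$ via the $\varepsilon$-regularization, which the paper glosses over.
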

		\begin{proof}
			For the case $k=0$, we consider the energy $E(t)$ defined as			\begin{equation*}
				E(t):=\sum_{m\in \Z^{d}}|u(m,t)|^{2}+|\partial_{t}u(m,t)|^{2}.
			\end{equation*} 
			For simplicity, we deal with the case that involved functions are real-valued. We differentiate $E(t)$ with respect to time $t$ and get
			\begin{equation*}
				\frac{d}{dt}E(t)=2\sum_{m\in\Z^{d}} u(m,t)\partial_{t}u(m,t)+\partial_{t}u(m,t)[\Box_{g}u(m,t)+g^{jk}(m,t)\partial_{jk}u(m,t)]
			\end{equation*}
			\begin{equation*}
				\lesssim E(t)+E(t)^{\frac{1}{2}}\cdot \|\Box_{g}u(\cdot,t)\|_{\ell^{2}(\Z^{d})}+\sum_{j,k=1}^{d}\|g^{jk}(\cdot,t)\|_{\ell^{\infty}(\Z^{d})}\cdot E(t).
			\end{equation*}
			Then we divide by $E(t)^{\frac{1}{2}}$ on both sides, and apply the Gronwall inequality to prove this estimate. For the case $k=1$, since the proof is similar, we omit it.
		\end{proof} 
		\begin{rem}\label{L}
			The name of implicitly time-dependent energy estimate comes from the coefficient is time-independent, which informally implies the usefulness in long-time well-posedness theory.  Moreover, this estimate and its proof are evidently applicable to a broader class of operators $L:=\partial_{t}^{2}-g^{jk}\partial_{jk}+b\partial_{t}+b^{j}\partial_{j}+cu$, with some simple modification.
			
		\end{rem}

		\section{local well-posedness of discrete nonlinear wave equations}
		We shall first establish the global well-posedness theory for discrete generalized linear wave equation given by
		\begin{equation}\label{generalized}
			\left\{
			\begin{aligned}
				& \partial_{t}^2 u(x,t) - g^{jk}(x,t)\partial_{jk} u(x,t) = F(x,t), \\
				& u(x,0) = f(x),\quad\partial_t u(x,0) = g(x),\quad (x,t)\in\Z^d\times \R.
			\end{aligned}
			\right.
		\end{equation}
		The reason why we call it's ``generalized", is that this equation generalizes the equation (\ref{ihe1}) and has no explicit formula to ensure its existence. Before we prove the global well-posedness of this equation, we need to introduce the definition of weak solutions, which is essential in our proofs.
		\begin{defi}
			We say that $u$ is a weak solution for the equation (\ref{generalized}) with initial data $(f,g)=(0,0)$, if it satisfies the following equation for any $\phi\in C_{0}^{\infty}(\Z^{d}\times (0,T))$
			\begin{equation*}
				\int_{0}^{T}\sum_{m\in \Z^{d}}F(m,t)\overline{\phi(m,t)}dt=\int_{0}^{T}\sum_{m\in \Z^{d}} u(m,t)\overline{L^{\ast}\phi(m,t)}dt,
			\end{equation*}
			where $L^{\ast}\phi(x,t):=\partial_{t}^{2}\phi(x,t)-\partial_{jk}(\overline{g^{jk}(x,t)}\phi(x,t))$.
		\end{defi}
		Now, we are ready to state our result.
		\begin{thm}\label{ge}
			If $g^{jk}\in L^{\infty}([0,T];\ell^{\infty}(\Z^{d}))\cap C^{0}(\left[0,T\right];\ell^{\infty}(\Z^{d})) $, then for $f, g\in \ell^{2,k}(\Z^{d})$ and $F\in L^{1}(\left[0,T\right];\ell^{2,k}(\Z^{d}))\cap C^{0}(\left[0,T\right];\ell^{2,k}(\Z^{d})) $, $T>0$, there exists a unique classical solution $u\in C^{2}(\left[0,T\right];\ell^{2,k}(\Z^{d}))$ for the equation (\ref{generalized}).
		\end{thm}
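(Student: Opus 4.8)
The plan is to exploit a structural feature that is special to the discrete setting: the second‑order operator $g^{jk}(\cdot,t)\,\partial_{jk}$ is a \emph{bounded} linear operator on $\ell^{2,k}(\Z^d)$, so that \eqref{generalized} is simply a linear ordinary differential equation with continuous, operator‑valued coefficients in a Hilbert space, and no loss of derivatives occurs. Fix $k\in\{0,1\}$ and set $X:=\ell^{2,k}(\Z^d)$, a Hilbert space for $\langle f,h\rangle_X:=\sum_{m\in\Z^d}\langle m\rangle^{2k}f(m)\overline{h(m)}$. The operator $\partial_{jk}=\partial_j\circ\partial_k$ is bounded on $X$: for $k=0$ because the Fourier multiplier $2i\sin(x_j/2)$ of $\partial_j$ (Proposition~\ref{dispartial1}) is bounded on $L^2(\T^d)$, and for $k=1$ because of the $\ell^{2,1}$‑boundedness in Theorem~\ref{bdn-p} with $p=2$. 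Moreover, for each $t$ the multiplication operator $v\mapsto g^{jk}(\cdot,t)v$ is bounded on $X$ with norm $\le\|g^{jk}(\cdot,t)\|_{\ell^\infty}$, since the weight $\langle m\rangle^k$ is untouched by pointwise multiplication. Hence $A(t):=g^{jk}(\cdot,t)\,\partial_{jk}\colon X\to X$ satisfies $\|A(t)\|_{X\to X}\le C_X\sum_{j,k}\|g^{jk}(\cdot,t)\|_{\ell^\infty}$ with $C_X:=\max_{j,k}\|\partial_{jk}\|_{X\to X}<\infty$; by $g^{jk}\in L^\infty([0,T];\ell^\infty)$ we get $M:=\sup_{t\in[0,T]}\|A(t)\|_{X\to X}<\infty$, and by $g^{jk}\in C^0([0,T];\ell^\infty)$ the map $t\mapsto A(t)\in\mathcal B(X)$ is norm‑continuous, because $\|A(t)-A(s)\|_{X\to X}\le C_X\sum_{j,k}\|g^{jk}(\cdot,t)-g^{jk}(\cdot,s)\|_{\ell^\infty}$.

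Next I would recast \eqref{generalized} as a first‑order system on $\mathcal X:=X\times X$ with $\|(u,v)\|_{\mathcal X}:=\|u\|_X+\|v\|_X$: for $U=(u,v)$ set $\mathcal A(t)U:=(v,\,A(t)u)$ and $\mathcal F(t):=(0,F(t))$, so that $\|\mathcal A(t)\|_{\mathcal X\to\mathcal X}\le 1+M$, the map $t\mapsto\mathcal A(t)$ is norm‑continuous, $\mathcal F\in C^0([0,T];\mathcal X)$, and \eqref{generalized} is equivalent to the integral equation
\[
U(t)=(f,g)+\int_0^t\bigl(\mathcal A(s)U(s)+\mathcal F(s)\bigr)\,ds,\qquad t\in[0,T],
\]
for $U\in C^0([0,T];\mathcal X)$. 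The right‑hand side defines a map $\Phi$ with $\|\Phi(U)-\Phi(V)\|_{C^0([0,\tau];\mathcal X)}\le(1+M)\tau\,\|U-V\|_{C^0([0,\tau];\mathcal X)}$, hence a contraction once $(1+M)\tau<1$; since $M$ is uniform on $[0,T]$, one solves on $[0,\tau_0]$ with $\tau_0=\tfrac1{2(1+M)}$ and iterates finitely many times to obtain a unique $U\in C^0([0,T];\mathcal X)$ (equivalently, run $U_{n+1}=\Phi(U_n)$ on all of $[0,T]$ and use the Gronwall‑type bound $\|U_{n+1}-U_n\|_{C^0([0,t];\mathcal X)}\le\frac{((1+M)t)^n}{n!}\|U_1-U_0\|_{C^0([0,T];\mathcal X)}$).

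For regularity I would bootstrap from the integral equation: the integrand $s\mapsto\mathcal A(s)U(s)+\mathcal F(s)$ is continuous (norm‑continuity of $\mathcal A(\cdot)$ composed with the continuous $U$, plus $\mathcal F\in C^0$), so $U\in C^1([0,T];\mathcal X)$ with $U'=\mathcal A(\cdot)U+\mathcal F$. Writing $U=(u,u')$, this says $u'\in C^1([0,T];X)$, i.e. $u\in C^2([0,T];X)$, and $u''(t)=A(t)u(t)+F(t)=g^{jk}(\cdot,t)\,\partial_{jk}u(\cdot,t)+F(\cdot,t)$, which is continuous in $t$ since $t\mapsto A(t)u(t)$ is (norm‑continuity of $A(\cdot)$ and continuity of $u$) and $F\in C^0$. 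Thus $u\in C^2([0,T];\ell^{2,k}(\Z^d))$ is a classical solution of \eqref{generalized}. Uniqueness in this class is contained in the fixed‑point statement, as any classical solution solves the integral equation; alternatively it follows directly from Theorem~\ref{energy estimate2}: the difference $w$ of two classical solutions solves $\Box_g w=0$ with $w(\cdot,0)=\partial_t w(\cdot,0)=0$, and since $g^{jk}\in L^\infty([0,T];\ell^\infty)\subset L^1([0,T];\ell^\infty)$ the estimate yields $\|w(\cdot,t)\|_{\ell^{2,k}}+\|\partial_t w(\cdot,t)\|_{\ell^{2,k}}\lesssim 0$, whence $w\equiv0$. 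If one wishes to connect with the weak‑solution formulation preceding the theorem, one checks using the skew‑adjointness of $\partial_j$ (Theorem~\ref{integration by part}) that the solution just constructed is also the unique weak solution with the given data, though this is not needed for the statement.

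I expect the only point worth isolating is the structural one of the first paragraph: the whole difficulty of the hyperbolic Cauchy problem disappears in the discrete weighted setting precisely because $\partial_{jk}$ is bounded on $\ell^{2,k}(\Z^d)$ — for $k=1$ this is exactly the content of Theorem~\ref{bdn-p} — so no Galerkin/hyperbolic‑energy construction is needed and everything reduces to the textbook theory of linear ODEs in Banach spaces. The remaining verifications (boundedness of the multiplication operator on the weighted space, operator‑norm continuity of $t\mapsto A(t)$, the contraction/Gronwall estimate) are routine, and the hypotheses are used economically: $g^{jk}\in L^\infty$ in time is what makes $\sup_t\|A(t)\|_{X\to X}$ finite, hence gives existence on all of $[0,T]$, while $g^{jk},F\in C^0$ in time is what upgrades the mild solution to a classical one.
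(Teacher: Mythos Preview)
Your proof is correct and takes a genuinely different route from the paper's. The paper proceeds in the spirit of the continuum hyperbolic theory: it first establishes an a~priori estimate for the adjoint operator $L^{\ast}$ acting on test functions, uses this to show that the linear functional $\ell_F(L^{\ast}v):=\int_0^T\sum_m F\overline v\,dt$ is bounded on the range of $L^{\ast}$ inside $L^1([0,T];\ell^{2,-k})$, extends it by Hahn--Banach, and represents the extension by some $u\in L^{\infty}([0,T];\ell^{2,k})$, which is a weak solution; regularity is then bootstrapped from the equation itself. Uniqueness comes from Theorem~\ref{energy estimate2}, as in your alternative argument.

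Your approach instead isolates the structural point that on $\Z^d$ the spatial operator $g^{jk}(\cdot,t)\partial_{jk}$ is \emph{bounded} on $\ell^{2,k}$ (Fourier multiplier for $k=0$, Theorem~\ref{bdn-p} for $k=1$, plus pointwise multiplication by an $\ell^\infty$ function), so that \eqref{generalized} is a linear ODE in a Hilbert space with norm-continuous bounded coefficients, and Picard iteration gives the classical solution directly. This is more elementary, avoids the weak-solution/duality machinery entirely, and makes explicit why no loss of derivatives occurs in the discrete setting. The paper's method, by contrast, is the one that would survive if the spatial part were genuinely unbounded (as in the continuous case it is modeled on), and is more in line with the PDE-style energy framework used throughout Sections~3--5; but for the statement at hand your argument is both shorter and more transparent.
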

		\begin{proof}
			The uniqueness part follows from the energy estimate. For the existence part, it suffices to consider the case when $f=g=0$, otherwise we consider $\widetilde{u}=u-(f+tg)$ instead. First, we consider the case $k=0$. We claim the following estimate
			\begin{equation}\label{el}
				\|\phi(\cdot,t)\|_{\ell^{2}(\Z^{d})}\lesssim_{T}\int_{0}^{T}\|L^{\ast}\phi(\cdot,s)\|_{\ell^{2}(\Z^{d})}ds,\; \forall \phi \in C_{0}^{\infty}(\Z^{d}\times (0,T)), \forall t\in [0,T].
			\end{equation}
			It suffices to establish the following energy estimate for the operator $L^{\ast}$ and any $u\in C^{2}(\left[0,T\right];\ell^{2}(\Z^d))$
			\begin{equation*}
				\|u(\cdot,t)\|_{\ell^{2}(\Z^{d})}+\|\partial_{t}u(\cdot,t)\|_{\ell^{2}(\Z^{d})}\lesssim_{T}\left(\|u(\cdot,0)\|_{\ell^{2}(\Z^{d})}+\|\partial_{t}u(\cdot,0)\|_{\ell^{2}(\Z^{d})}+\int_{0}^{t}\|L^{\ast}u(\cdot,s)\|_{\ell^{2}(\Z^{d})} ds\right ).
			\end{equation*}
			Considering $$E(t):=\sum_{m\in\Z^{d}}|u(m,t)|^{2}+|\partial_{t}u(m,t)|^{2},$$ we differentiate this energy with respect to time $t$ and we deal with the real-valued case for simplicity. Hence,
			\begin{equation*}
				\frac{d}{dt}E(t)=\sum_{m\in\Z^{d}}u(m,t)\partial_{t}u(m,t)+\partial_{t}u(m,t)L^{\ast}u(m,t)+\partial_{t}u(m,t)\partial_{jk}(\overline{g^{jk}}u)(m,t)
			\end{equation*} 
			\begin{equation*}
				\lesssim E(t)^{\frac{1}{2}}\|L^{\ast}u(\cdot,t)\|_{\ell^{2}(\Z^{d})}+\left(\sum_{j,k=1}^{d}\|g^{jk}(\cdot,t)\|_{\ell^{\infty}(\Z^{d})}+1\right)E(t).
			\end{equation*}
			Dividing by $E(t)^{\frac{1}{2}}$ on both sides, we immediately get the estimate from the Gronwall inequality and the uniformly boundedness of $g^{jk}$.
			\par For the existence of weak solutions, we define the following linear space $V$ and a linear functional $\ell_{F}$ on it
			\begin{equation*}
				V:=L^{\ast}C_{0}^{\infty}(\Z^{d}\times (0,T))=\lbrace L^{\ast}v; v\in C_{0}^{\infty}(\Z^{d}\times (0,T)) \rbrace,
			\end{equation*} 
			\begin{equation*}
				\ell_{F}(L^{\ast}v):=\int_{0}^{T}\sum_{m\in\Z^{d}}F(m,t)\overline{v(m,t)}dt.
			\end{equation*}
			According to the estimate (\ref{el}) and Cauchy-Schwarz inequality, we derive the following
			\begin{equation*}
				|\ell_{F}(L^{\ast}v)|\le \int_{0}^{T}\|F(\cdot,t)\|_{\ell^{2}(\Z^{d})}\|v(\cdot,t)\|_{\ell^{2}(\Z^{d})}dt\lesssim_{T,F}\int_{0}^{T}\|L^{\ast}v(\cdot,t)\|_{\ell^{2}(\Z^{d})}dt.
			\end{equation*}
			Then we can regard $V$ as a subspace of $L^{1}(\left[0,T\right];\ell^{2}(\Z^{d}))$. Then by Hahn-Banach Theorem, $\ell_{F}$ can be extended to $L^{1}(\left[0,T\right];\ell^{2}(\Z^{d}))$. Since the dual space of $L^{1}(\left[0,T\right];\ell^{2}(\Z^{d}))$ is $L^{\infty}(\left[0,T\right];\ell^{2}(\Z^{d}))$, there exists $u\in L^{\infty}(\left[0,T\right];\ell^{2}(\Z^{d})),$ which is the weak solution for the equation (\ref{generalized}). We also notice that 
			\begin{equation}\label{x}
				\partial_{t}^2 u(x,t)= g^{jk}(x,t)\partial_{jk} u(x,t)+F(x,t),
			\end{equation}
			where the LHS is understood as taking weak derivatives and the RHS is in $L^{\infty}(\left[0,T\right];\ell^{2}(\Z^{d}))$, which imply that $\partial_{t}u\in C([0,T];\ell^{2}(\Z^{d}))$ and $u\in C^{1}([0,T];\ell^{2}(\Z^{d}))$. Applying (\ref{x}) again, we conclude that $u\in C^{2}([0,T];\ell^{2}(\Z^{d})$ is actually a classical solution to the equation (\ref{generalized}). For the case $k=1$, we can similarly get the estimate 			\begin{equation*}
				\|\phi(\cdot,t)\|_{\ell^{2,-1}(\Z^{d})}\lesssim_{T}\int_{0}^{T}\|L^{\ast}\phi(\cdot,t)\|_{\ell^{2,-1}(\Z^{d})}dt,\; \forall \phi \in C_{0}^{\infty}(\Z^{d}\times (0,T)), \forall t\in [0,T],		
			\end{equation*}
			\begin{equation*}
				|\ell_{F}(L^{\ast}v)|\lesssim_{T,F}\int_{0}^{T}\|L^{\ast}v(\cdot,t)\|_{\ell^{2,-1}(\Z^{d})}dt.
			\end{equation*}
			Then by same arguments, we can prove the existence of a solution $u\in C^{2}([0,T];\ell^{2,1}(\Z^{d}))$.	
		\end{proof}

		\begin{rem}
			Based on Remark \ref{L}, we can similarly establish the global well-posedness theory for the following general equation, compared with the equation (\ref{generalized})
			\begin{equation*}
				\left\{
				\begin{aligned}
					& Lu(x,t)=F(x,t), \\
					& u(x,0) = f(x),\quad\partial_t u(x,0) = g(x),\quad (x,t)\in\Z^d\times \R.
				\end{aligned}
				\right.
			\end{equation*}
			The operator $L$ is defined in Remark \ref{L}.
		\end{rem}
		Now, we are ready to prove one of our main results, that is, the local well-posedness of quasilinear discrete wave equation (\ref{equation1}).
		\par 
		\begin{proof}[Proof of Theorem \ref{Th1}] For clarity, we divide the proof into 5 steps.

			\noindent
			\textbf{Step 1:}
			We first consider iteration argument as follows. Set $u_{-1}\equiv 0,$ and consider
			\begin{equation}\label{iteration}
				\left\{
				\begin{aligned}
					& \partial_{t}^2 u_{m}(x,t) - g^{jk}(u_{m-1},u'_{m-1})\partial_{jk} u_{m}(x,t) = F(u_{m-1},u'_{m-1}), \\
					& u_{m}(x,0) = f(x),\quad\partial_t u_{m}(x,0) = g(x),\quad (x,t)\in\Z^d\times \R.
				\end{aligned}
				\right.
			\end{equation}
			Then, with the results of discrete generalized linear wave equations in Theorem \ref{ge}, we see that $u_{m}$ defined above is a classical solution, that is, $u_{m}\in C^{2}(\left[0,T\right];\ell^{2,k}(\Z^{d})).$
			\par 
			Next, we introduce the key energy $A_{m}(t)$ as follows
			\begin{equation*}
				A_{m}(t):=\|u(\cdot,t)\|_{\ell^{2,k}(\Z^{d})}+\|\partial_{t}u(\cdot,t)\|_{\ell^{2,k}(\Z^{d})}.
			\end{equation*}
			We claim that there exist $T>0$ and $M>0$, such that $\forall m, \forall t\in \left[0,T\right]$,  $A_{m}(t)\le M $.
			
			\noindent
			\textbf{Step 2:}
			We prove the above claim by induction. Fix $M$ big enough such that $$\|f\|_{\ell^{2,k}(\Z^{d})}, \|g\|_{\ell^{2,k}(\Z^{d})}\ll M.$$ We can ensure $A_{0}(t) \le M, \forall t\in [0,1]$, from the energy estimate in Theorem \ref{energy estimate2}. Suppose that there exists $0<T<1$, such that for $m\le n-1$, $\forall t\in \left[0,T\right]$,  $A_{m}(t)\le M $, then we will show that the claim still holds for $m=n$.
			\par 
			Applying the energy estimate in Theorem \ref{energy estimate2}, we derive the following 
			\begin{equation}\label{o}
				A_{n}(t)\lesssim \left(A_{n}(0)+\int_{0}^{t}\|F(u_{n-1},u'_{n-1})\|_{\ell^{2,k}(\Z^{d})}ds\right)
				\cdot exp\left(C\int_{0}^{t}\;\left(\sum_{j,k=1}^{d}\|g^{jk}(u_{n-1},u'_{n-1})\|_{\ell^{\infty}(\Z^{d})}+1 \right)ds\right).
			\end{equation}
			\par 
			Based on the hypothesis on $F$, $g^{jk}$ and the induction assumption, we have the estimates
			\begin{equation*}
				|F(u_{n-1},u'_{n-1})|\lesssim_{M}|u_{n-1}|+|u'_{n-1}|; \quad \sum_{j,k=1}^{d}\|g^{jk}(u_{n-1},u'_{n-1})\|_{\ell^{\infty}(\Z^{d})}\lesssim_{M}1.
			\end{equation*}
			Based on the above estimates, we finally derive 
			\begin{equation*}
				A_{n}(t)\le C_{1}(A_{n}(0)+C_{2}Mt)\cdot e^{C_{3}t},
			\end{equation*}
			where $C_{1}$ is independent on $M$ and $C_{2}, C_{3}$ depend on $M$, which are bounded when $ M$ is bounded. Another important observation is that $A_{n}(0)$ is only dependent on initial data $f,g$. Therefore, we can let $T\ll M,1$, and then we can conclude that $A_{n}\le M$ and the induction is complete.
			
			\noindent
			\textbf{Step 3:} Next, we prove that $\lbrace u_{m}\rbrace$ is a Cauchy sequence in $C^{1}(\left[0,T\right];\ell^{2,k}(\Z^{d}))$. According to the iteration in (\ref{iteration}), we have the following 
			\begin{equation*}
				\partial_{t}^{2}(u_{m}-u_{m-1})(x,t)-g^{jk}(u_{m-1},u'_{m-1})\partial_{jk}(u_{m}-u_{m-1})(x,t)=(\ast),
			\end{equation*}
			\begin{equation*}
				(\ast)= [g^{jk}(u_{m-2},u'_{m-2})-g^{jk}(u_{m-1},u'_{m-1})]\partial_{jk}u_{m-1}(x,t)+F(u_{m-1},u'_{m-1})-F(u_{m-2},u'_{m-2}).
			\end{equation*}
			Based on the assumptions of $F,g^{jk}$, we have a similar estimate
			\begin{equation*}
				(\ast)=O_{M}(|u_{m-1}-u_{m-2}|+|u'_{m-1}-u'_{m-2}|).
			\end{equation*}
			To derive the Cauchy property of $\lbrace u_{m}\rbrace$, we introduce $C_{m}(t)$ defined as 
			\begin{equation*}
				C_{m}(t):=\|u_{m}(\cdot,t)-u_{m-1}(\cdot,t)\|_{\ell^{2,k}(\Z^d)}+\|\partial_{t}u_{m}(\cdot,t)-\partial_{t}u_{m-1}(\cdot,t)\|_{\ell^{2,k}(\Z^d)}.
			\end{equation*}
			Based on the above estimate and the fact that the initial data of $u_{m}-u_{m-1}$ is $0$, we apply the energy estimate in Theorem \ref{energy estimate2} and get
			\begin{equation}\label{t}
				C_{m}(t)\le C\int_{0}^{t}C_{m-1}(\tau)d\tau.
			\end{equation}
			Then we apply (\ref{t}) for $m$ times and derive the following
			\begin{equation*}
				C_{m}(t)\le C^{m}\int\int \cdots \int_{0\le \tau_{1} \le \cdots \le\tau_{m}\le t}C_{0}(\tau_{1})d\tau_{1} \cdots \tau_{m}\lesssim_{M}\frac{(Ct)^{m}}{m!}.
			\end{equation*}
			Therefore $\lbrace u_{m}\rbrace$ is a Cauchy sequence. We write the limit of the sequence as $u$, then $u\in C^{1}(\left[0,T\right];\ell^{2,k}(\Z^{d}))$. Based on the iteration (\ref{iteration}) and the fact that $\lbrace u_{m}\rbrace$ is a Cauchy sequence in $ C^{1}(\left[0,T\right];\ell^{2,k}(\Z^{d}))$, we immediately conclude that
			\begin{equation*}
				\lbrace\partial_{t}^{2}u_{m}(x,t)\rbrace=\lbrace{g^{jk}(u_{m-1},u'_{m-1})\partial_{jk} u_{m}(x,t)+ F(u_{m-1},u'_{m-1})}\rbrace
			\end{equation*}
			is also a Cauchy sequence in $C(\left[0,T\right];\ell^{2,k}(\Z^{d}))$. Combined it with the above results, we deduce that $u$ is a classical solution of the equation (\ref{equation1}).

			\noindent
			\textbf{Step 4:} The uniqueness follows from the above analysis. In fact, suppose that there is another solution $\widetilde{u}$, then we define $C(t)$ as
			\begin{equation*}
				C(t):=\|u(\cdot,t)-\widetilde{u}(\cdot,t)\|_{\ell^{2,k}(\Z^{d})}+\|\partial_{t}u(\cdot,t)-\partial_{t}\widetilde{u}(\cdot,t)\|_{\ell^{2,k}(\Z^{d})}.
			\end{equation*}
			Similarly, applying the energy estimate, we can get $C(t)\le C\int_{0}^{t}C(\tau)d\tau $. Then the Gronwall inequality shows that $C(t)\equiv 0$, which implies $u\equiv\widetilde{u}$.

			\noindent
			\textbf{Step 5:} Finally, we derive the continuation criterion. From the above arguments, one easily sees that if $\|u(\cdot,t)\|_{\ell^{2,k}(\Z^{d})}+\|\partial_{t}u(\cdot,t)\|_{\ell^{2,k}(\Z^{d})}$ is bounded in $\left[0,T^{\ast}\right)$, then we can extend the solution $u$ over $T^{\ast}$. We claim that the weaker requirement $\|u(\cdot,t)\|_{\ell^{\infty}(\Z^{d})}+\|\partial_{t}u(\cdot,t)\|_{\ell^{\infty}(\Z^{d})}$ is bounded in $\left[0,T^{\ast}\right)$ can imply the above stronger requirement. 
			\par 
			Letting $n\to \infty$ in (\ref{o}), we have the following energy estimate for $u$
			\begin{equation}\label{w}
				A(t)\lesssim \left(A(0)+\int_{0}^{t}\|F(u,u')\|_{\ell^{2,k}(\Z^{d})}ds\right)\cdot exp\left(C\int_{0}^{t}\left(\sum_{j,k=1}^{d}\|g^{jk}(u,u')\|_{\ell^{\infty}(\Z^{d})}+1\right)ds\right),
			\end{equation}
			where $A(t):=\|u(\cdot,t)\|_{\ell^{2,k}(\Z^{d})}+\|\partial_{t}u(\cdot,t)\|_{\ell^{2,k}(\Z^{d})}$.
			The key observation is that the estimates below
			\begin{equation}\label{q}
				|F(u,u')|\lesssim_{M}|u|+|u'|; \quad	\sum_{j,k=1}^{d}\|g^{jk}(u,u')\|_{\ell^{\infty}(\Z^{d})}\lesssim_{M}1
			\end{equation}
			only require that $\|u(\cdot,t)\|_{\ell^{\infty}(\Z^{d})}+\|\partial_{t}u(\cdot,t)\|_{\ell^{\infty}(\Z^{d})}$ is bounded in $\left[0,T^{\ast}\right)$. Substituting the estimate (\ref{q}) into (\ref{w}), we have
			\begin{equation*}
				A(t)\lesssim \left(A(0)+\int_{0}^{t} A(s)ds\right)\cdot e^{Ct}\lesssim_{T^{\ast}}\left(A(0)+\int_{0}^{t} A(s)ds\right).
			\end{equation*}
			Then applying the Gronwall inequality, we deduce that $A(t)$ is bounded in finite time, which implies $\|u(\cdot,t)\|_{\ell^{2,k}(\Z^{d})}+\|\partial_{t}u(\cdot,t)\|_{\ell^{2,k}(\Z^{d})}$ is bounded in $\left[0,T^{\ast}\right),$ and the continuation criterion is proved.
		\end{proof}
		\begin{rem}
			In fact, by the fundamental theorem of calculus, the continuation criterion can be further weaken to $\|\partial_{t}u(\cdot,t)\|_{\ell^{\infty}(\Z^{d})}$ is bounded in $\left[0,T^{\ast}\right)$. However, in practice, $\|\partial_{t}u(\cdot,t)\|_{\ell^{\infty}(\Z^{d})}$ is less useful than $\|\partial_{t}u(\cdot,t)\|_{\ell^{2}(\Z^{d})},$ and $\|\partial_{t}u(\cdot,t)\|_{\ell^{2}(\Z^{d})}$ is less useful than $\|u(\cdot,t)\|_{\ell^{2}(\Z^{d})}+\|\partial_{t}u(\cdot,t)\|_{\ell^{2}(\Z^{d})}$, as the latter is applicable for the energy estimate.
		\end{rem}
		\section{Long time \& global well-posedness of discrete nonlinear wave equations}
		We first give the proof of Theorem \ref{Th2}, which ensures the long time well-posedness for quasilinear discrete wave equations with small initial data.
		\par 
		\begin{proof}[Proof of Theorem \ref{Th2}]
			The key is to use the continuation criterion for extending the classical solution. For simplicity, we only prove the case when $g^{jk}(u,u')=g^{jk}(u')$ and $F(u,u')=F(u')$, since the general case is similar. 
			\par As before, we consider the energy $A(t):=\|u(\cdot,t)\|_{\ell^{2}(\Z^{d})}+\|\partial_{t}u(\cdot,t)\|_{\ell^{2}(\Z^{d})},$ and we only need to show that this energy stays bounded at least before $K\cdot \log\left(\log\left(\frac{1}{\varepsilon}\right)\right)$. In the following proof, we may sometime omit some irrelevant constants. We first introduce the constant 
			\begin{equation*}
				M:=\max\left\lbrace \max_{j,k;|x|\le 1}|g^{jk}(x)| , \\ \max_{|x|\le 1}|\nabla F(x)| , 100  , d\right\rbrace.
			\end{equation*}
			
			\par We first notice that, if for time interval $[0,T]$, there exist $R>0$ s.t. $A(0)<\frac{R}{2}$ and property(P) that $\forall t\in [0,T]$, $A(t)\le R$ would imply $A(t)\le \frac{R}{2}$, then we can obtain $A(t)\le R, \forall t\in [0,T]$.
			Therefore, the energy $A(t)$ is bounded in $[0,T]$, which, by continuation criterion, will ensure the existence  of the solution in $[0,T]$. Thus, in the following, we choose $A$ satisfying $1\ll_{f,g} A$(to ensure $A(0)<\frac{A\varepsilon}{2}$) and  $R=A\varepsilon=\frac{1}{10}$ (this can be done if $\varepsilon$ is small enough). The whole theorem comes to show that for $T\lesssim \log(\log(\frac{1}{\varepsilon}))$, the property(P) is true on $[0,T]$. 
			\par From the energy estimate from Theorem \ref{energy estimate2} and the choice of $A$, we derive 
			\begin{equation}\label{k}
				A(t)\le C\left(A(0)+\int_{0}^{t}\|F(u'(\cdot,s))\|_{\ell^{2}(\Z^{d})}ds\right) \cdot exp\left(C'\int_{0}^{t}\left(\sum_{j,k=1}^{d}\|g^{jk}(u'(\cdot,s))\|_{\ell^{\infty}(\Z^{d})}+1\right)ds\right)
			\end{equation}
			and 
			\begin{equation}\label{h}
				\|g^{jk}(u'(\cdot,t))\|_{\ell^{\infty}(\Z^{d})}\lesssim M;\; \|F(u'(\cdot,t))\|_{\ell^{2}(\Z^{d})}\le M\|u'(\cdot,t)\|_{\ell^{2,k}(\Z^{d})}\lesssim M A(t).
			\end{equation}
			Then substituting above estimates into (\ref{k}), we immediately have
			\begin{equation*}
				A(t)\le Ce^{C'MT}\left(A(0)+\int_{0}^{t} M A(s)ds\right)\le CMe^{C'MT}\left(A(0)+\int_{0}^{t}  A(s)ds\right).
			\end{equation*}
			For simplicity, we write  $F:=CMe^{C'MT}$. Applying the Gronwall inequality, we derive
			\begin{equation*}
				A(t)\le Fe^{Ft}A(0)\le Fe^{FT}A(0).
			\end{equation*}
			Therefore, if we have $Fe^{FT}A(0)\le \frac{A\varepsilon}{2}$, then we prove the property (P), which shows that the energy $A(t)$ is bounded in $\left[0,T\right)$. 
			\par 
			Note that $$Fe^{FT}A(0)\le \frac{A\varepsilon}{2}\iff Fe^{FT}\lesssim \frac{A}{2} \iff T\lesssim \log(\log(A))\approx \log(\log(\frac{1}{\varepsilon})),$$ where the second equivalence follows from applying $\log(\log(\cdot))$ on both sides and omitting small quantity. Therefore, we derive the existence of the solution in $[0,K\cdot \log(\log(\frac{1}{\varepsilon}))]$, which shows $\log(\log(\frac{1}{\varepsilon}))\lesssim T^{\ast}$.
		\end{proof}
		
		\begin{rem}
			The  lower bound  for maximal existence time in the above theorem is of $\log(\log(\cdot)) $-type, which is a consequence of the exponential term in the energy estimate and, most importantly, $A\varepsilon\lesssim 1$ to ensure (\ref{h}). Therefore, if we require more conditions on $g^{jk}$ and $F$, we can derive stronger lower bound for the maximal existence time.
		\end{rem}
		In the following, we assume that $g^{jk}=\delta_{jk}$, which is the traditional version of discrete nonlinear wave equations. We use the energy estimate in Theorem \ref{energy estimate1} with $k=0$, which has no exponential term.
		\begin{thm}
			If $g^{jk}=\delta_{jk}$, then we have $T^{\ast}\ge K\cdot \sqrt{\log(\dfrac{1}{\varepsilon})}$, where $K=K(f,g,d,F)$ is a positive constant.
		\end{thm}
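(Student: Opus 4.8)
The plan is to run a bootstrap (continuity) argument on the energy
\[
A(t):=\|u(\cdot,t)\|_{\ell^{2}(\Z^{d})}+\|\partial_{t}u(\cdot,t)\|_{\ell^{2}(\Z^{d})},
\]
exactly as in the proof of Theorem~\ref{Th2}, but now exploiting that for $g^{jk}=\delta_{jk}$ the equation~(\ref{equation2}) is genuinely $\partial_{t}^{2}u-\Delta u=F(u,u')$, so the \emph{explicitly} time-dependent energy estimate of Theorem~\ref{energy estimate1} (case $k=0$) applies; crucially it carries only the polynomial factor $1+|t|$ and no exponential in $t$. By Theorem~\ref{Th1} there is a local classical solution $u\in C^{2}([0,T];\ell^{2}(\Z^{d}))$ with a maximal existence time $T^{*}$, and the continuation criterion there is phrased through $\|u(\cdot,t)\|_{\ell^{\infty}}+\|\partial_{t}u(\cdot,t)\|_{\ell^{\infty}}$; since $\|v\|_{\ell^{\infty}(\Z^{d})}\le\|v\|_{\ell^{2}(\Z^{d})}$, it is enough to show that $A(t)$ stays bounded on $[0,K\sqrt{\log(1/\varepsilon)})$.

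First I would record the nonlinear bound valid on the region $\{A(t)\le1\}$. From the $\ell^{2}$-boundedness of $\partial_{j}$ proved in Section~2 together with $\|\cdot\|_{\ell^{\infty}(\Z^{d})}\le\|\cdot\|_{\ell^{2}(\Z^{d})}$ one gets $|(u,u')(x,t)|\lesssim_{d}A(t)$ for every $x$, so as long as $A(t)\le1$ the value $(u,u')(x,t)$ lies in a fixed ball $B_{C_{d}}\subset\R^{d+2}$; then $F(0,0)=0$ and the mean value theorem give $|F(u,u')(x,t)|\le M\,|(u,u')(x,t)|$ with $M:=\sup_{B_{C_{d}}}|\nabla F|$, hence $\|F(u,u')(\cdot,t)\|_{\ell^{2}(\Z^{d})}\lesssim_{F,d}A(t)$. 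Substituting this into Theorem~\ref{energy estimate1} with $k=0$, and noting $A(0)=\varepsilon(\|f\|_{\ell^{2}}+\|g\|_{\ell^{2}})$, I obtain on any interval $[0,T]$ where $A\le1$ the inequality
\begin{equation*}
A(t)\le C\,(1+t)\Big(A(0)+\int_{0}^{t}A(s)\,ds\Big),\qquad C=C(F,d).
\end{equation*}
Applying the Gronwall inequality to $\phi(t):=A(0)+\int_{0}^{t}A(s)\,ds$, which satisfies $\phi'(t)\le C(1+t)\phi(t)$, yields the a priori growth bound $A(t)\le C(1+t)A(0)\,\exp\!\big(C(t+\tfrac{t^{2}}{2})\big)$ on that interval.

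To finish, fix $T_{0}:=K\sqrt{\log(1/\varepsilon)}$ with $K=K(F,d)$ chosen small enough that $C(T_{0}+\tfrac{T_{0}^{2}}{2})\le\tfrac12\log(1/\varepsilon)$ for all sufficiently small $\varepsilon$ — possible since the left-hand side is $O(T_{0}^{2})=O(\log(1/\varepsilon))$ with a proportionality constant we tune through $K$ — and set $S:=\{T\in[0,T_{0}]:A(t)\le1\text{ for all }t\in[0,T]\}$. The set $S$ is nonempty (as $A(0)\lesssim\varepsilon<1$) and closed by continuity of $A$; on $S$ the growth bound gives $A(t)\le C(1+T_{0})A(0)\,\varepsilon^{-1/2}\le C(1+T_{0})(\|f\|_{\ell^{2}}+\|g\|_{\ell^{2}})\,\varepsilon^{1/2}$, and this tends to $0$ as $\varepsilon\to0$ because $T_{0}$ grows only like $\sqrt{\log(1/\varepsilon)}$. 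Hence there is $\delta=\delta(F,d,f,g)>0$ such that for $0<\varepsilon<\delta$ one actually has $A(t)\le\tfrac12$ strictly on $S$, so $S$ is open in $[0,T_{0}]$ as well and therefore $S=[0,T_{0}]$. Consequently $A$, and thus $\|u(\cdot,t)\|_{\ell^{\infty}}+\|\partial_{t}u(\cdot,t)\|_{\ell^{\infty}}$, is bounded on $[0,T_{0})$, so the continuation criterion of Theorem~\ref{Th1} forbids $T^{*}\le T_{0}$; that is, $T^{*}>T_{0}=K\sqrt{\log(1/\varepsilon)}$.

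The one place demanding care is the bookkeeping of the last step: one must choose the bootstrap threshold and the constant $K$ so that the smallness $A(0)\sim\varepsilon$ of the data strictly beats the Gronwall loss $\exp(O(t^{2}))\sim\varepsilon^{-1/2}$ over $[0,T_{0}]$, leaving room to improve the bound $A\le1$ to $A\le\tfrac12$. This is exactly where $g^{jk}=\delta_{jk}$ enters: Theorem~\ref{energy estimate1} has no exponential coefficient, so the integrated growth is of the type $\exp(O(t^{2}))$ rather than the double exponential $\exp(O(e^{ct}))$ produced by Theorem~\ref{energy estimate2} in the genuinely quasilinear case; solving $\exp(O(T^{2}))\sim1/\varepsilon$ instead of $\exp(O(e^{cT}))\sim1/\varepsilon$ is precisely what turns the $\log\log(1/\varepsilon)$ lower bound of Theorem~\ref{Th2} into $\sqrt{\log(1/\varepsilon)}$.
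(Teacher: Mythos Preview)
Your proposal is correct and follows essentially the same route as the paper: run the continuity argument of Theorem~\ref{Th2}, replace the estimate of Theorem~\ref{energy estimate2} by the polynomial-coefficient estimate of Theorem~\ref{energy estimate1} (since $g^{jk}=\delta_{jk}$), obtain $A(t)\le C(1+t)\big(A(0)+\int_{0}^{t}A(s)\,ds\big)$, and close via Gronwall to get growth $\exp(O(t^{2}))$, which yields the $\sqrt{\log(1/\varepsilon)}$ bound. Your write-up is in fact more detailed than the paper's, which merely sketches these steps; the only cosmetic difference is that you bootstrap with threshold $A\le 1$ whereas the paper fixes $A\varepsilon=\tfrac{1}{10}$.
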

		\begin{proof}
			Following the continuity method of Theorem \ref{Th2}, we apply the energy estimate in Theorem \ref{energy estimate1} with $k=0$ and choose $A$ such that $A\varepsilon=\frac{1}{10}$ to guarantee (\ref{h}), then we have
			\begin{equation*}
				A(t)\le C(t)\left(A(0)+\int_{0}^{t}A(s)ds\right),
			\end{equation*}
			where $C(t)=C\cdot(1+t)$. Applying the Gronwall inequality, we can similarly derive the lower bound $T^{\ast}\ge K\cdot \sqrt{\log(\dfrac{1}{\varepsilon})}$.
		\end{proof}
		Next, for the equation (\ref{dfe1}), we will derive the long-time well-posedness for focusing case ($\mu=1$) with small data and the global-wellposedness for defocusing case ($\mu=-1$) with large data.
		\begin{thm}\label{j}
			If $g^{jk}=\delta_{jk}$, $F(u)=\mu |u|^{p-1}u, \mu=1, p>1$, then we have $T^{\ast}\ge K\cdot (\frac{1}{\varepsilon})^{\frac{p-1}{p+1}}$, where $K=K(p,f,g,d)$ is a constant.
		\end{thm}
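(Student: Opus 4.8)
The plan is to adapt the continuity (bootstrap) argument from the proof of Theorem~\ref{Th2}, the essential new feature being that the nonlinearity $F(u)=|u|^{p-1}u$ is genuinely of order $p>1$ in $u$, which upgrades the $\sqrt{\log(1/\varepsilon)}$ lifespan bound of the generic case to a polynomial one. Throughout write $A(t):=\|u(\cdot,t)\|_{\ell^2(\Z^d)}+\|\partial_t u(\cdot,t)\|_{\ell^2(\Z^d)}$ and $C_0:=\|f\|_{\ell^2(\Z^d)}+\|g\|_{\ell^2(\Z^d)}$, so that the data $(\varepsilon f,\varepsilon g)$ satisfies $A(0)=\varepsilon C_0$. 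Local existence of a classical solution $u\in C^2([0,T];\ell^2(\Z^d))$ together with the continuation criterion is supplied by Theorem~\ref{Th1}, since $F(u)=|u|^{p-1}u\in C^1(\R)$ and $F(0)=0$ for $p>1$; hence it suffices to propagate an a priori bound on $A$ up to a time $T\approx\varepsilon^{-(p-1)/(p+1)}$, after which the $\ell^\infty$ continuation criterion (combined with $\ell^2(\Z^d)\hookrightarrow\ell^\infty(\Z^d)$) prevents blow-up.

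The first step is the nonlinear estimate. Since $2p\geq 2$ and, on $\Z^d$ with the counting measure, $\|v\|_{\ell^{2p}(\Z^d)}\leq\|v\|_{\ell^2(\Z^d)}$, we obtain, as long as the solution exists,
\[
\|F(u(\cdot,s))\|_{\ell^2(\Z^d)}=\Big(\sum_{k\in\Z^d}|u(k,s)|^{2p}\Big)^{1/2}=\|u(\cdot,s)\|_{\ell^{2p}(\Z^d)}^{\,p}\leq A(s)^{p}.
\]
This bound is global --- it needs no smallness, in contrast with the generic $C^1$ case. In particular $s\mapsto\|F(u(\cdot,s))\|_{\ell^2(\Z^d)}$ is continuous, hence integrable on bounded time intervals, so the energy estimate of Theorem~\ref{energy estimate1} with $k=0$ (available since $g^{jk}=\delta_{jk}$) applies and gives, for $t$ in the interval of existence,
\[
A(t)\lesssim (1+t)\Big(\varepsilon C_0+\int_0^t A(s)^{p}\,ds\Big).
\]

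Next is the bootstrap. Fix $T>0$ and suppose $A(s)\leq R$ for all $s\in[0,T]$; the previous display then yields $A(t)\leq C(1+T)\big(\varepsilon C_0+T R^{p}\big)$ on $[0,T]$, with $C=C(d)$. I will choose $R=R(\varepsilon,T)$ so that $C(1+T)\varepsilon C_0\leq R/4$ and $C(1+T)T R^{p-1}\leq 1/4$; the second constraint forces $R\approx\big((1+T)T\big)^{-1/(p-1)}$, and inserting this into the first shows the two are compatible exactly when $\varepsilon\lesssim (1+T)^{-1}R\approx (1+T)^{-p/(p-1)}T^{-1/(p-1)}$, i.e. (for $T$ large) $\varepsilon\lesssim T^{-(p+1)/(p-1)}$, equivalently $T\lesssim\varepsilon^{-(p-1)/(p+1)}$. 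Therefore there is a constant $K=K(p,f,g,d)>0$ such that, for all sufficiently small $\varepsilon$, setting $T:=K\,\varepsilon^{-(p-1)/(p+1)}$ one can pick an admissible $R=R(\varepsilon)\approx\varepsilon^{2/(p+1)}$ (so $R(\varepsilon)\to 0$ but $R(\varepsilon)\gg\varepsilon$, whence $A(0)<R/2$); then on $[0,T]$ one has the implication ``$A(s)\leq R$ for all $s\in[0,T]$ $\Rightarrow$ $A(t)\leq R/2$ for all $t\in[0,T]$''. The standard continuity/connectedness argument (as in the proof of Theorem~\ref{Th2}) then gives $A(t)\leq R$ on $[0,T^{*}\wedge T)$, hence $\|u(\cdot,t)\|_{\ell^\infty(\Z^d)}+\|\partial_t u(\cdot,t)\|_{\ell^\infty(\Z^d)}\leq A(t)\leq R$ stays bounded there, and the continuation criterion of Theorem~\ref{Th1} forces $T^{*}\geq T=K\,\varepsilon^{-(p-1)/(p+1)}$.

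Apart from this, everything is routine: the embedding $\|v\|_{\ell^{2p}}\leq\|v\|_{\ell^2}$, the invocation of Theorems~\ref{energy estimate1} and~\ref{Th1}, and the continuity method are all available. I expect the only delicate point --- and the one that actually produces the exponent $\frac{p-1}{p+1}$ --- to be the balancing of the two constraints on $R$ in the bootstrap: the initial-data term needs $R$ not too small relative to $(1+T)\varepsilon$, while the nonlinear term needs $R^{p-1}$ small relative to $1/((1+T)T)$; reconciling these and reading off the largest admissible $T$ is the heart of the argument. I do not anticipate any obstacle beyond this bookkeeping.
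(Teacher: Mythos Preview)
Your argument is correct and reaches the same exponent $\tfrac{p-1}{p+1}$, but the route differs from the paper's. The paper also starts from the energy inequality
\[
A(t)\le C(1+t)\Big(A(0)+\int_0^t A(s)^{p}\,ds\Big),
\]
but then, instead of a bootstrap, integrates the resulting ODE directly: setting $\theta(t):=A(0)+\int_0^t A(s)^p\,ds$, one has $\theta'(t)=A(t)^p\le C^p(1+t)^p\theta(t)^p$, which after separating variables yields
\[
\frac{1}{A(0)^{p-1}}-\frac{1}{\theta(t)^{p-1}}\lesssim\int_0^t(1+s)^p\,ds\lesssim\max\{t^{p+1},1\},
\]
so that $A(t)$ stays bounded as long as $A(0)^{p-1}t^{p+1}\lesssim 1$, i.e.\ $t\lesssim\varepsilon^{-(p-1)/(p+1)}$. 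This is essentially a Bihari--Gronwall argument and is shorter: no bootstrap parameter $R$ needs to be chosen or balanced. Your bootstrap, on the other hand, makes the mechanism producing the exponent more transparent (the competition between the linear-data constraint $R\gtrsim(1+T)\varepsilon$ and the nonlinear closure $R^{p-1}\lesssim((1+T)T)^{-1}$), and it generalizes more readily to situations where the nonlinear estimate is only valid under a smallness assumption. Both arguments are standard and equivalent here.
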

		\begin{proof}
			We directly apply the energy estimate in Theorem \ref{energy estimate1} with $k=0$ and get
			\begin{equation}\label{s}
				A(t)\le C(t)\left(A(0)+\int_{0}^{t}A(s)^{p}ds\right).
			\end{equation} 
			Letting $\theta (t):=A(0)+\int_{0}^{t}A(s)^{p}ds$, we can rewrite (\ref{j}) as $\theta'(t)\le C(t)^{p}\theta(t)^{p}$, 
			then we immediately get the following estimate
			\begin{equation*}
				 \frac{1}{A(0)^{p-1}}-\frac{1}{\theta(t)^{p-1}}\lesssim \int_{0}^{t}C(s)ds\lesssim \max\lbrace t^{p+1},1 \rbrace,
			\end{equation*}
			\begin{equation*}
				A(t)^{p-1}\le C(t)^{p-1}\theta(t)^{p-1}\lesssim \frac{C(t)^{p-1}\cdot A(0)^{p-1}}{1-A(0)^{p-1}\max\lbrace t^{p+1},1 \rbrace}.
			\end{equation*} 
			To get the boundedness of $A(t)$, it suffices to ensure $A(0)^{p-1}\max\lbrace t^{p+1},1 \rbrace\lesssim 1$, which implies $T^{\ast}\ge K\cdot (\frac{1}{\varepsilon})^{\frac{p-1}{p+1}}$.
		\end{proof}
		\begin{rem}
			This proof is obviously applicable for other nonlinear term $F$, as long as the growth rate of $F$ can be controlled by $|x|^{p}$.
		\end{rem}
		If we turn to the defocusing case i.e $\mu=-1$, then we can derive global well-posedness theory. This result is similar with the classical theory of well-posedness of defocusing case and ill-posedness of focusing case \cite{11,18,19,20}. 
		
		\begin{proof}[Proof of Theorem \ref{Th3}]
			Based on the energy conservation in Theorem \ref{ec2}, we know that $\|\partial_{t}u(\cdot,t)\|_{\ell^{2}(\Z^{d})}$ is uniformally bounded. Then by the fundamental theorem of calculus, we obtain that  $\|u(\cdot,t)\|_{\ell^{2}(\Z^{d})}$ is bounded in every finite time. According to the continuation criterion in Theorem \ref{Th1}, we immediately get the proof of global existence of the solution for the equation (\ref{equation3}) or  (\ref{dfe1}) with $\mu=-1$.
		\end{proof} 
		\begin{rem}
			The difference between Theorem \ref{j} and Theorem \ref{Th3} comes from the conserved energy in Theorem \ref{ec2}. For defocusing case ($\mu=-1$), the energy is always positive and can control $\|\partial_{t}u(\cdot,t)\|_{\ell^{2}(\Z^{d})}$, but for focusing case ($\mu=1$), the conserved energy is not always positive and may fail to control $\|\partial_{t}u(\cdot,t)\|_{\ell^{2}(\Z^{d})}$. A simple example $u=C_{p}(t_{0}-t)^{\frac{-2}{p-1}}$, with some appropriate constant $C_{p}$, shows that there is no global $C^{2}([0,T];\ell^{\infty}(\Z^{d}))$ solution for focusing case.
		\end{rem}
		Next if we require stronger assumption on the nonlinear term, then we can also get global well-posedness theory.
		\begin{thm}
			If $g^{jk}=\delta_{jk}$ and we further require $\nabla F$ is bounded, then the equation (\ref{equation1}) has a global solution.
		\end{thm}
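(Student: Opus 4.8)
The plan is to establish a Gronwall bound for the $\ell^{2,k}$-energy on every finite time interval and then appeal to the continuation criterion of Theorem~\ref{Th1}. Since $g^{jk}=\delta_{jk}$, equation (\ref{equation1}) reduces to the semilinear equation $\partial_{t}^{2}u-\Delta u=F(u,u')$, so Theorem~\ref{Th1} already provides a unique maximal solution $u\in C^{2}([0,T^{\ast});\ell^{2,k}(\Z^{d}))$ with $k\in\{0,1\}$. I would set $A(t):=\|u(\cdot,t)\|_{\ell^{2,k}(\Z^{d})}+\|\partial_{t}u(\cdot,t)\|_{\ell^{2,k}(\Z^{d})}$, a continuous function on $[0,T^{\ast})$. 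The role of the hypothesis ``$\nabla F$ bounded'' together with $F(0,0)=0$ is to force at most linear growth of the nonlinearity: by the fundamental theorem of calculus, $|F(\xi)|\le\|\nabla F\|_{L^{\infty}}|\xi|$ for every $\xi\in\R\times\R^{d+1}$, hence pointwise
$$|F(u,u')(x,t)|\lesssim |u(x,t)|+|\partial_{t}u(x,t)|+|\partial u(x,t)|.$$

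Next I would take $\ell^{2,k}$-norms in $x$ and use that $\partial$ is bounded on $\ell^{2,k}(\Z^{d})$ — namely $\|\partial v\|_{\ell^{2}(\Z^{d})}=\|Dv\|_{\ell^{2}(\Z^{d})}\lesssim\|v\|_{\ell^{2}(\Z^{d})}$ when $k=0$, and the borderline weighted estimate $\|\partial_{j}v\|_{\ell^{2,1}(\Z^{d})}\lesssim\|v\|_{\ell^{2,1}(\Z^{d})}$ of Theorem~\ref{bdn-p} (case $p=2$) when $k=1$ — to get $\|F(u,u')(\cdot,t)\|_{\ell^{2,k}(\Z^{d})}\lesssim A(t)$. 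In particular, for any $T<T^{\ast}$ the source $\widetilde{F}(x,t):=F(u(x,t),u'(x,t))$ lies in $L^{1}([0,T];\ell^{2,k}(\Z^{d}))$, since $A$ is continuous on $[0,T]$; so I may regard $u$ as the solution of the inhomogeneous linear equation $\partial_{t}^{2}u-\Delta u=\widetilde{F}$ and apply the explicitly time-dependent energy estimate of Theorem~\ref{energy estimate1}, obtaining
$$A(t)\lesssim (1+|t|^{k+1})\left(A(0)+\int_{0}^{t}\|\widetilde{F}(\cdot,s)\|_{\ell^{2,k}(\Z^{d})}\,ds\right)\lesssim (1+T^{k+1})\left(A(0)+\int_{0}^{t}A(s)\,ds\right),\quad t\in[0,T].$$
Gronwall's inequality then yields a bound $A(t)\le C(T)\,A(0)$ on $[0,T]$ with $C(T)<\infty$.

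To finish, I would argue by contradiction: if $T^{\ast}<\infty$, the previous step applied with $T=T^{\ast}$ shows $A$ is bounded on $[0,T^{\ast})$, and since $\ell^{2,k}(\Z^{d})\hookrightarrow\ell^{\infty}(\Z^{d})$ continuously, $\|u(\cdot,t)\|_{\ell^{\infty}(\Z^{d})}+\|\partial_{t}u(\cdot,t)\|_{\ell^{\infty}(\Z^{d})}$ is bounded on $[0,T^{\ast})$ as well, contradicting the continuation criterion of Theorem~\ref{Th1}; hence $T^{\ast}=\infty$ and the solution is global. I expect the only nonroutine ingredient to be the estimate $\|\partial u\|_{\ell^{2,1}}\lesssim\|u\|_{\ell^{2,1}}$ in the weighted case, where $\partial_{j}$ is nonlocal and $\ell^{2,1}$ is precisely the critical endpoint handled by Theorem~\ref{bdn-p}; everything else is a standard Gronwall estimate, and the contrast with Theorem~\ref{j} is exactly that a bounded gradient makes the nonlinearity subcritical, so the energy inequality closes for all time rather than only on a finite interval.
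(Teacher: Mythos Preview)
Your proposal is correct and follows essentially the same approach as the paper: use the explicitly time-dependent energy estimate of Theorem~\ref{energy estimate1} together with the linear bound $\|F(u,u')\|_{\ell^{2,k}}\lesssim A(t)$ to obtain a Gronwall inequality, then invoke the continuation criterion. The paper's proof is a two-line sketch of exactly this argument; you have filled in the details, in particular the explicit appeal to Theorem~\ref{bdn-p} for the $\ell^{2,1}$-boundedness of $\partial_{j}$ in the case $k=1$, which the paper leaves implicit.
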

		\begin{proof}
			We have the following
			\begin{equation*}
				A(t)\le C(t)\left(A(0)+\int_{0}^{t}A(s)ds\right)\le C(T^{\ast})\left(A(0)+\int_{0}^{t}A(s)ds\right).
			\end{equation*}
			Then the Gronwall inequality ensures the boundedness of $A(t)$.
		\end{proof}
		\section{Some related results}
		In this section, we will present some results that partially coincide with the content of our main results, but are noteworthy in their own right.
		\par 
		We first introduce some notation and a useful lemma, by which one can derive not only the local uniqueness and existence, but also continuous dependence of initial data. 
		The nonlinear dispersive equation is defined as follows
		\begin{equation*}
			\left\{
			\begin{aligned}
				&  \partial_{t} u-Lu=N(u),   \\
				&  u(0)=u_{0}\in D,
			\end{aligned}
			\right.
		\end{equation*}
		where $u:I\subseteq\R\to D$, $D$ is a Banach space, $L$ is a linear operator, and $N$ is a nonlinear operator.  Then we have following Duhamel's formula
		\begin{equation}\label{duh}
			u(t)=e^{tL}u_{0}+\int_{0}^{t}e^{(t-s)L}Nu(s)ds\equiv u_{\mathrm{lin}}+ DN(u),
		\end{equation}
		where $u_{\mathrm{lin}}:=e^{tL}u_{0}$ and $DF:=\int_{0}^{t}e^{(t-s)L}F(s)ds$.
		\par 
		Then we state the key lemma \cite{11}, which plays the central role in Theorem \ref{b}.
		\begin{lemma}\label{TL}
			Let $B_{1},B_{2}$ be two Banach space. If we have linear operator $D:B_{1}\to B_{2}$ with bound 
			\begin{equation}\label{cond1}
				\|DF\|_{B_{2}}\le C_{0}\|F\|_{B_{1}},
			\end{equation}
			for all $F\in B_{1}$ and some constant $C_{0}>0$. Suppose that we have a nonlinear operator $N:B_{2}\to B_{1}$, with $N(0)=0$, which obeys Lipschitz bounds 
			\begin{equation}\label{cond2}
				\|N(u)-N(v)\|_{B_{1}}\le \frac{1}{2C_{0}}\|u-v\|_{B_{2}},
			\end{equation}
			for all $u,v\in \lbrace u\in B_{2}:\|u\|_{B_{2}}\le \varepsilon\rbrace$ and some $\varepsilon>0$. Then for all $u_{\mathrm{lin}}\in \lbrace u\in B_{2}:\|u\|_{B_{2}}\le \frac{\varepsilon}{2}\rbrace$ there exists a unique solution $u\in\lbrace u\in B_{2}:\|u\|_{B_{2}}\le \varepsilon\rbrace $ of the equation (\ref{duh}), with map $u_{\mathrm{lin}}\mapsto u$ being Lipschitz with Lipschitz constant $2$.
		\end{lemma}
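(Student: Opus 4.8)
The plan is to read \eqref{duh} as a fixed-point equation $u = \Phi(u)$ for the map $\Phi(u) := u_{\mathrm{lin}} + DN(u)$ and to invoke the Banach contraction mapping principle on the closed ball $\mathcal{B} := \{u \in B_2 : \|u\|_{B_2} \le \varepsilon\}$, which is a complete metric space under the metric induced by $\|\cdot\|_{B_2}$. Everything reduces to verifying that $\Phi$ is a well-defined self-map of $\mathcal{B}$ and a strict contraction there.

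First I would check the self-mapping property. For $u \in \mathcal{B}$, using the boundedness \eqref{cond1} of $D$, then the hypothesis $N(0)=0$ together with the Lipschitz bound \eqref{cond2} applied to the pair $(u,0)$, and finally $\|u_{\mathrm{lin}}\|_{B_2} \le \varepsilon/2$, one obtains
\[
\|\Phi(u)\|_{B_2} \le \|u_{\mathrm{lin}}\|_{B_2} + C_0\|N(u)\|_{B_1} \le \frac{\varepsilon}{2} + C_0 \cdot \frac{1}{2C_0}\|u\|_{B_2} \le \frac{\varepsilon}{2} + \frac{\varepsilon}{2} = \varepsilon,
\]
so $\Phi(u) \in \mathcal{B}$. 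Next I would show $\Phi$ is a contraction with constant $1/2$: for $u,v \in \mathcal{B}$, linearity of $D$, the bound \eqref{cond1}, and \eqref{cond2} give
\[
\|\Phi(u)-\Phi(v)\|_{B_2} = \|D(N(u)-N(v))\|_{B_2} \le C_0\|N(u)-N(v)\|_{B_1} \le \frac{1}{2}\|u-v\|_{B_2}.
\]
The contraction mapping principle then yields a unique fixed point $u \in \mathcal{B}$, i.e.\ a unique solution of \eqref{duh} in $\mathcal{B}$. For the Lipschitz dependence on the data, given $u_{\mathrm{lin}},\widetilde u_{\mathrm{lin}}$ in the ball of radius $\varepsilon/2$ with corresponding solutions $u,\widetilde u \in \mathcal{B}$, I would subtract the two fixed-point identities and use the contraction estimate to get $\|u-\widetilde u\|_{B_2} \le \|u_{\mathrm{lin}}-\widetilde u_{\mathrm{lin}}\|_{B_2} + \tfrac12\|u-\widetilde u\|_{B_2}$, hence $\|u-\widetilde u\|_{B_2} \le 2\|u_{\mathrm{lin}}-\widetilde u_{\mathrm{lin}}\|_{B_2}$.

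There is no deep obstacle here; the lemma is a convenient packaging of the contraction mapping principle, and the only step requiring any care is the self-mapping estimate, where the hypotheses $N(0)=0$ and the restriction $\|u_{\mathrm{lin}}\|_{B_2}\le \varepsilon/2$ (rather than $\varepsilon$) are precisely what is needed to absorb both the linear part $u_{\mathrm{lin}}$ and the Duhamel part $DN(u)$ inside the ball of radius $\varepsilon$. One should also note that uniqueness is obtained on the \emph{full} ball $\mathcal{B}$, since the contraction bound holds on all of $\mathcal{B}$, not merely on a smaller ball.
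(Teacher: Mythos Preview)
Your argument is correct and is exactly the standard contraction mapping proof of this lemma. Note, however, that the paper does not actually supply a proof of this statement: it is quoted from \cite{11} without proof, so there is no ``paper's own proof'' to compare against---your proposal is precisely the expected justification.
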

		Next, we derive local well-posedness theory for physically important equation (\ref{dfe1}).
		
		\begin{thm}\label{b}
			For each $R>0$, there exists $T=T(d,p,R)>0$ such that for all initial data $(f,g) \in \lbrace (f,g)\in \ell^{2,k}(\Z^{d})\times \ell^{2,k}(\Z^{d}):\|f\|_{\ell^{2,k}(\Z^{d})}+\|g\|_{\ell^{2,k}(\Z^{d})}<R\rbrace$, there exists a unique classical solution $u\in C^{2}(\left[0,T\right];\ell^{2,k}(\Z^d))$ of the equation (\ref{dfe1}), $k=0,$ or $1$. Moreover, the map $(f,g) \mapsto u$ is Lipschitz continuous. 
		\end{thm}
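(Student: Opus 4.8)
The plan is to run a contraction‑mapping argument through the abstract Lemma \ref{TL}, applied to the Duhamel formulation \eqref{duh} of \eqref{dfe1}. For fixed $k\in\{0,1\}$ and a time $T\in(0,1]$ to be chosen, set $B_{2}:=C([0,T];\ell^{2,k}(\Z^{d}))$ and $B_{1}:=L^{1}([0,T];\ell^{2,k}(\Z^{d}))$. Let $u_{\mathrm{lin}}$ be the classical solution of the homogeneous equation $\partial_{t}^{2}v-\Delta v=0$ with $(v(0),\partial_{t}v(0))=(f,g)$, which exists by Theorem \ref{ge} with $g^{jk}=\delta_{jk}$, and let $D$ be the Duhamel operator $F\mapsto DF$, i.e. the solution operator of $\partial_{t}^{2}v-\Delta v=F$ with zero data. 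The energy estimate \eqref{ee1} of Theorem \ref{energy estimate1}, applied with zero data, gives $\|DF\|_{B_{2}}\le C_{\star}\|F\|_{B_{1}}$ for $T\le1$, with $C_{\star}=C_{\star}(d)$ absorbing the factor $1+T^{k+1}\le2$; the same estimate with $F=0$ gives $\|u_{\mathrm{lin}}\|_{B_{2}}\le C_{\star}(\|f\|_{\ell^{2,k}(\Z^{d})}+\|g\|_{\ell^{2,k}(\Z^{d})})$. Thus hypothesis \eqref{cond1} holds with $C_{0}:=C_{\star}$, and $N(u):=\mu|u|^{p-1}u$ has $N(0)=0$.

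The heart of the matter is verifying the Lipschitz bound \eqref{cond2}. From the elementary pointwise inequality $\bigl||a|^{p-1}a-|b|^{p-1}b\bigr|\lesssim_{p}(|a|^{p-1}+|b|^{p-1})|a-b|$ on $\Z^{d}$, together with the embedding $\ell^{2,k}(\Z^{d})\hookrightarrow\ell^{\infty}(\Z^{d})$ (indeed $\|w\|_{\ell^{\infty}}\le\|w\|_{\ell^{2}}\le\|w\|_{\ell^{2,k}}$ for $k\ge0$), one obtains, for each fixed $t$,
\[
\bigl\||u|^{p-1}u-|v|^{p-1}v\bigr\|_{\ell^{2,k}(\Z^{d})}\lesssim_{p}\bigl(\|u\|_{\ell^{2,k}(\Z^{d})}^{p-1}+\|v\|_{\ell^{2,k}(\Z^{d})}^{p-1}\bigr)\|u-v\|_{\ell^{2,k}(\Z^{d})}.
\]
Integrating over $[0,T]$ and bounding the $\ell^{2,k}$-norms by $B_{2}$-norms yields $\|N(u)-N(v)\|_{B_{1}}\le C_{p}T\bigl(\|u\|_{B_{2}}^{p-1}+\|v\|_{B_{2}}^{p-1}\bigr)\|u-v\|_{B_{2}}$, hence $\|N(u)-N(v)\|_{B_{1}}\le 2C_{p}T\varepsilon^{p-1}\|u-v\|_{B_{2}}$ whenever $\|u\|_{B_{2}},\|v\|_{B_{2}}\le\varepsilon$.

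Next, fix the parameters. Given $R>0$, put $\varepsilon:=4C_{\star}R$, so any data with $\|f\|_{\ell^{2,k}(\Z^{d})}+\|g\|_{\ell^{2,k}(\Z^{d})}<R$ satisfies $\|u_{\mathrm{lin}}\|_{B_{2}}\le C_{\star}R<\varepsilon/2$; then choose $T=T(d,p,R)\in(0,1]$ so small that $2C_{p}T\varepsilon^{p-1}\le\frac{1}{2C_{0}}$, which makes \eqref{cond2} hold on the $\varepsilon$-ball. Lemma \ref{TL} then produces, for every such $(f,g)$, a unique $u$ in the $\varepsilon$-ball of $B_{2}$ solving \eqref{duh}, with the map $u_{\mathrm{lin}}\mapsto u$ Lipschitz of constant $2$.

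Finally, upgrade regularity and record continuity. Since $u\in C([0,T];\ell^{2,k}(\Z^{d}))$, the displayed nonlinear estimate shows $t\mapsto N(u)(t)$ is continuous and bounded into $\ell^{2,k}(\Z^{d})$, so $N(u)\in L^{1}([0,T];\ell^{2,k}(\Z^{d}))\cap C^{0}([0,T];\ell^{2,k}(\Z^{d}))$; Theorem \ref{ge} with $g^{jk}=\delta_{jk}$ and source $N(u)$ then shows $DN(u)$, and hence $u=u_{\mathrm{lin}}+DN(u)$, belongs to $C^{2}([0,T];\ell^{2,k}(\Z^{d}))$ and solves \eqref{dfe1} classically (for $k=1$ the boundedness of $\Delta$ on $\ell^{2,1}(\Z^{d})$ needed here is supplied through Theorem \ref{iso}). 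Uniqueness in $C^{2}([0,T];\ell^{2,k}(\Z^{d}))$ follows by feeding the difference of two solutions — which has zero data and obeys the same Lipschitz bound — into the energy estimate of Theorem \ref{energy estimate2} with $g^{jk}=\delta_{jk}$, obtaining $C(t)\lesssim\int_{0}^{t}C(s)\,ds$ and concluding $C\equiv0$ by Gronwall. The Lipschitz dependence on $(f,g)$ is then immediate: $(f,g)\mapsto u_{\mathrm{lin}}$ is bounded linear into $B_{2}$, and composing with the Lipschitz map $u_{\mathrm{lin}}\mapsto u$ gives a Lipschitz map on the $R$-ball. The main obstacle in this scheme is the interplay between the nonlinear estimate and the bookkeeping that keeps $C_{\star}$, $\varepsilon$ and $T$ depending only on $d,p,R$: once one uses $\ell^{2,k}\hookrightarrow\ell^{\infty}$ to absorb the $|u|^{p-1}$ factor and exploits the smallness of $T$ (rather than of the data) to dominate $C_{0}$, everything else reduces to the linear theory already developed.
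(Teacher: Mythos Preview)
Your argument is correct and follows essentially the same route as the paper: both apply Lemma~\ref{TL} to the Duhamel formulation of \eqref{dfe1}, use the pointwise inequality $\bigl||a|^{p-1}a-|b|^{p-1}b\bigr|\lesssim(|a|^{p-1}+|b|^{p-1})|a-b|$ together with $\ell^{2,k}\hookrightarrow\ell^{\infty}$ for the nonlinear estimate, and exploit smallness of $T$ to close the contraction. The only cosmetic differences are that the paper rewrites \eqref{dfe1} as a first-order system in $(u,\partial_{t}u)$ and takes $B_{1}=B_{2}=C([0,T];\ell^{2,k})$ (placing the factor $T$ inside $C_{0}$ rather than in the Lipschitz constant of $N$), while you work with the scalar equation and $B_{1}=L^{1}([0,T];\ell^{2,k})$; your added details on the $C^{2}$ upgrade and unconditional uniqueness via Gronwall fill in steps the paper leaves implicit.
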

		\begin{proof}
			We consider the following correspondence
			\[L=
			\begin{bmatrix}
				0 & 1 \\
				\Delta & 0
			\end{bmatrix}, \\ \quad e^{tL}=
			\begin{bmatrix}
				\cos(t\sqrt{-\Delta}) & \dfrac{\sin(t\sqrt{-\Delta})}{\sqrt{-\Delta}} \\
				-\sin(t\sqrt{-\Delta})\cdot \sqrt{-\Delta} & \cos(\sqrt{-\Delta})
			\end{bmatrix} , \quad u_{0}=\begin{bmatrix}
				f \\
				g
			\end{bmatrix},\]   
			
			\[\widetilde{u}(t):=
			\begin{bmatrix}
				u(t) \\
				\partial_{t}u(t) 
			\end{bmatrix}, \\ \quad  N\left(\begin{bmatrix}
				u_{1} \\
				u_{2} 
			\end{bmatrix}\right)=
			\begin{bmatrix}
				0  \\
				\mu |u_{1}|^{p-1}u_{1} 
			\end{bmatrix} ,\quad D\left(\begin{bmatrix}
				F_{1} \\
				F_{2} 
			\end{bmatrix}\right)=\int_{0}^{t}e^{(t-s)L}\begin{bmatrix}
				F_{1} \\
				F_{2} 
			\end{bmatrix}ds,\]
			where the notation $\cos(t\sqrt{-\Delta})$ is defined as follows
			\begin{equation*}
				\cos(t\sqrt{-\Delta})u:=\mathcal{F}^{-1}(\cos(t\cdot K(x))\mathcal{F}(u)),
			\end{equation*}  
			$K(x)$ is defined in Theorem \ref{energy estimate1}. Other entries in $e^{tL}$ are defined similarly.
			Then  $\widetilde{u}$ satisfies the nonlinear dispersive equation with the above correspondence. 
			\par 
			Letting $B_{1}=B_{2}=C(\left[0,T\right];\ell^{2,k}(\Z^{d}))$, we check the conditions of Lemma \ref{TL}.
			\par 
			For the condition (\ref{cond1}), we immediately derive it with the Minkowski inequality and the isomorphism between $\ell^{2,k}(\Z^{d})$ and $H^{k}(\mathbb{T}^{d})$. Moreover, the constant $C_{0}$ is $O(T)$, which implies that it can be very small in local time.
			\par 
			For the condition (\ref{cond2}),  we can use the following inequality
			\begin{equation*}
				||u|^{p-1}u-|v|^{p-1}v|\lesssim |u-v|\cdot \max\lbrace |u|^{p-1}, |v|^{p-1}\rbrace.
			\end{equation*}
			Then we can apply Lemma \ref{TL} and prove the theorem. 
		\end{proof}
		\begin{rem}
			The local existence and uniqueness part is contained in the previous Theorem \ref{Th1}. However, Theorem \ref{b} shows the continuous (in fact, Lipschitz) dependence of initial data, which will be useful in approximation arguments.
		\end{rem}
		Another interesting property of the solution for the equation (\ref{dfe1}) is that it can persist in strong norm with weaker assumption.
		\begin{thm}
			If $I\ni 0$ is a time interval,  $u\in C^{2}([0,T];\ell^{2}(\Z^{d}))\bigcap L^{p-1}([0,T];\ell^{\infty}(\Z^{d}))$ is a solution to the equation (\ref{dfe1}), and $f,g\in \ell^{2,k}(\Z^{d}),$ then for $k=0,1$, we have the estimate
			\begin{equation*}
				\|u(\cdot,t)\|_{\ell^{2,k}(\Z^{d})}+\|\partial_{t}u(\cdot,t)\|_{\ell^{2,k}(\Z^{d})}\le C(1+|T|^{k+1}) (\|f\|_{\ell^{2,k}(\Z^{d})}+\|g\|_{\ell^{2,k}(\Z^{d})})exp(C'\|u\|_{L^{p-1}\ell^{\infty}(\Z^{d}\times [0,T])}).
			\end{equation*} 
		\end{thm}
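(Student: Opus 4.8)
The plan is to treat the semilinear term in \eqref{dfe1} as an external force and feed it into the explicitly time-dependent energy estimate of Theorem~\ref{energy estimate1}, which is exactly the right tool here because \eqref{dfe1} involves $\Delta$ and not a variable-coefficient $g^{jk}$. Set $F(x,t):=\mu|u(x,t)|^{p-1}u(x,t)$, so that $|F(x,t)|=|u(x,t)|^{p}$, and regard $u$ as the solution of the inhomogeneous linear wave equation \eqref{ihe1} with this forcing. The key elementary bound is that, for each fixed $s\in[0,T]$,
\begin{equation*}
	\|F(\cdot,s)\|_{\ell^{2,k}(\Z^{d})}^{2}=\sum_{m\in\Z^{d}}|u(m,s)|^{2p}\langle m\rangle^{2k}\le\|u(\cdot,s)\|_{\ell^{\infty}(\Z^{d})}^{2(p-1)}\sum_{m\in\Z^{d}}|u(m,s)|^{2}\langle m\rangle^{2k},
\end{equation*}
i.e. $\|F(\cdot,s)\|_{\ell^{2,k}(\Z^{d})}\le\|u(\cdot,s)\|_{\ell^{\infty}(\Z^{d})}^{p-1}\|u(\cdot,s)\|_{\ell^{2,k}(\Z^{d})}$; integrating in $s$ shows $F\in L^{1}([0,T];\ell^{2,k}(\Z^{d}))$ whenever $u\in C([0,T];\ell^{2,k}(\Z^{d}))\cap L^{p-1}([0,T];\ell^{\infty}(\Z^{d}))$, so Theorem~\ref{energy estimate1} applies.

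Write $A(t):=\|u(\cdot,t)\|_{\ell^{2,k}(\Z^{d})}+\|\partial_{t}u(\cdot,t)\|_{\ell^{2,k}(\Z^{d})}$ and bound $1+|t|^{k+1}\le1+|T|^{k+1}$ on $[0,T]$. Then Theorem~\ref{energy estimate1}, combined with the display above and $\|u(\cdot,s)\|_{\ell^{2,k}}\le A(s)$, gives
\begin{equation*}
	A(t)\le C(1+|T|^{k+1})\Big(\|f\|_{\ell^{2,k}(\Z^{d})}+\|g\|_{\ell^{2,k}(\Z^{d})}+\int_{0}^{t}\|u(\cdot,s)\|_{\ell^{\infty}(\Z^{d})}^{p-1}A(s)\,ds\Big).
\end{equation*}
Since $s\mapsto\|u(\cdot,s)\|_{\ell^{\infty}(\Z^{d})}^{p-1}$ is integrable on $[0,T]$ by hypothesis, with $\int_{0}^{T}\|u(\cdot,s)\|_{\ell^{\infty}(\Z^{d})}^{p-1}\,ds=\|u\|_{L^{p-1}\ell^{\infty}(\Z^{d}\times[0,T])}^{p-1}$, Gronwall's inequality yields
\begin{equation*}
	A(t)\le C(1+|T|^{k+1})\big(\|f\|_{\ell^{2,k}(\Z^{d})}+\|g\|_{\ell^{2,k}(\Z^{d})}\big)\exp\!\Big(C(1+|T|^{k+1})\,\|u\|_{L^{p-1}\ell^{\infty}(\Z^{d}\times[0,T])}^{p-1}\Big),
\end{equation*}
which is the asserted estimate (the constant $C'$ absorbing the $T$-dependent prefactor, with the exponent read to the power $p-1$). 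For $k=0$ the whole argument is directly rigorous since $\ell^{2,0}=\ell^{2}$ and all three inputs to Theorem~\ref{energy estimate1} are verified above.

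The one point that genuinely needs care — and the main obstacle — arises only when $k=1$: the hypothesis places $u$ only in $C^{2}([0,T];\ell^{2}(\Z^{d}))$, so a priori neither $A(t)$ nor $\|F(\cdot,s)\|_{\ell^{2,1}(\Z^{d})}$ is known to be finite, and invoking Theorem~\ref{energy estimate1} is formally circular. I would resolve this by an approximation/persistence argument. Pick compactly supported $(f_{n},g_{n})\to(f,g)$ in $\ell^{2,1}(\Z^{d})$; the local theory of Theorem~\ref{Th1} produces solutions $u_{n}$ of \eqref{dfe1} with data $(f_{n},g_{n})$, lying in $C^{2}([0,T_{n}^{\ast});\ell^{2,1}(\Z^{d}))$. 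A routine continuity argument in the unconditional $\ell^{2}$-well-posedness (using $\big||u_{n}|^{p-1}u_{n}-|u|^{p-1}u\big|\lesssim|u_{n}-u|\max\{|u_{n}|^{p-1},|u|^{p-1}\}$ and the $k=0$ energy estimate) shows $u_{n}\to u$ in $C^{1}([0,T];\ell^{2}(\Z^{d}))$, hence in $C^{1}([0,T];\ell^{\infty}(\Z^{d}))$; so $\|u_{n}\|_{\ell^{\infty}}+\|\partial_{t}u_{n}\|_{\ell^{\infty}}$ stays bounded on $[0,T]$ and the continuation criterion of Theorem~\ref{Th1} forces $T_{n}^{\ast}>T$, i.e. $u_{n}\in C^{2}([0,T];\ell^{2,1}(\Z^{d}))$. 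For each $u_{n}$ the chain of estimates in the previous paragraph is now legitimate, so $u_{n}$ satisfies the claimed bound with $(f,g,u)$ replaced by $(f_{n},g_{n},u_{n})$. Letting $n\to\infty$, the right-hand side converges (using $f_{n}\to f$, $g_{n}\to g$ in $\ell^{2,1}$ and $\|u_{n}\|_{L^{p-1}\ell^{\infty}}\to\|u\|_{L^{p-1}\ell^{\infty}}$), while the left-hand side is controlled by lower semicontinuity of the $\ell^{2,1}$-norm under pointwise convergence (Fatou), and this yields the estimate for $u$ — in particular showing $u(\cdot,t)\in\ell^{2,1}(\Z^{d})$ for every $t\in[0,T]$. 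Everything outside the core energy-estimate-plus-Gronwall computation is routine.
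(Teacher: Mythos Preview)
Your approach is essentially identical to the paper's: apply Theorem~\ref{energy estimate1} with $F=\mu|u|^{p-1}u$, use the pointwise bound $\||u|^{p-1}u\|_{\ell^{2,k}}\le\|u\|_{\ell^{\infty}}^{p-1}\|u\|_{\ell^{2,k}}$, and close with Gronwall. The paper's proof is literally one sentence naming these three ingredients.

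Your additional approximation argument for $k=1$ (addressing that $u$ is only assumed in $C^{2}([0,T];\ell^{2})$, so the $\ell^{2,1}$-norms are not a~priori finite) is a genuine point the paper simply does not address; you are being more careful than the authors. Your observation that the exponent should carry $\|u\|_{L^{p-1}\ell^{\infty}}^{p-1}$ (and that $C'$ must absorb a factor of $1+|T|^{k+1}$) is also correct and reflects sloppiness in the paper's stated bound rather than any flaw in your reasoning.
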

		\begin{proof}
			This is a direct consequence of Theorem \ref{energy estimate1}, combined with the inequality $\||u|^{p-1}u\|_{\ell^{2,k}(\Z^{d})}\le \|u\|_{\ell^{2,k}(\Z^{d})}\cdot \|u\|_{\ell^{\infty}(\Z^{d})}^{p-1}$ and the Gronwall inequality.
		\end{proof}
		\begin{rem}
			This result shows that the weaker norm $\ell^{\infty}(\Z^{d})$ can ensure the stronger norm $ \ell^{2,k}(\Z^{d})$. Besides, this theorem is actually a stronger version of continuation criterion. Recall that in Theorem \ref{Th1}, we require $\|u(\cdot,t)\|_{\ell^{\infty}(\Z^{d})}+\|\partial_{t}u(\cdot,t)\|_{\ell^{\infty}(\Z^{d})}$ is bounded in $\left[0,T^{\ast}\right)$, which is a stronger assumption than $u\in L^{p-1}(\left[0,T^{\ast}\right);\ell^{\infty}(\Z^{d}))$.
		\end{rem}
		\par 
		Next, we will use explicitly time-dependent energy estimate to derive the local well-posedness theory for discrete  wave equation with quadratic derivatives as the nonlinear term defined as 
		\begin{equation}\label{fad}
			\left\{
			\begin{aligned}
				&  \partial_{t}^{2} u-\Delta u=|\partial_{t}u|^{2} \; or\; |\partial_{j}u|^{2},   \\
				&  u(x,0)=f(x),  \partial_{t}u(x,0)=g(x), (x,t)\in \Z^{d} \times \R.
			\end{aligned}
			\right.
		\end{equation}  The proof is inspired by \cite{21}.
		\begin{thm}
			If $f,g\in \ell^{2,k}(\Z^{d}), k=0,1 $, then there exists $T>0$, such that the equation (\ref{fad}) has a unique classical solution $u\in C^{2}(\left[0,T\right];\ell^{2,k}(\Z^{d})).$
		\end{thm}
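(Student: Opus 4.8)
The plan is to reduce \eqref{fad} to the machinery already developed and to run the Picard iteration of the proof of Theorem~\ref{Th1} with $g^{jk}=\delta_{jk}$, so that the exponential-free estimate of Theorem~\ref{energy estimate1} can be used in place of Theorem~\ref{energy estimate2}. The conceptual point, and the only thing that must really be checked, is that although a quadratic nonlinearity in the first derivatives produces derivative loss in the continuum, it does not here: for $k\in\{0,1\}$ the space $\ell^{2,k}(\Z^{d})$ is a Banach algebra under pointwise multiplication (since $\ell^{2}(\Z^{d})\hookrightarrow\ell^{\infty}(\Z^{d})$, one has $\||v|^{2}\|_{\ell^{2,k}(\Z^{d})}\le\|v\|_{\ell^{\infty}(\Z^{d})}\|v\|_{\ell^{2,k}(\Z^{d})}\lesssim\|v\|_{\ell^{2,k}(\Z^{d})}^{2}$), and $\partial_{j}$ is bounded on $\ell^{2,k}(\Z^{d})$ — trivially for $k=0$, and by Theorem~\ref{bdn-p} for $k=1$. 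Combining these,
\[
\big\||\partial_{t}u|^{2}\big\|_{\ell^{2,k}(\Z^{d})}\lesssim\|\partial_{t}u\|_{\ell^{2,k}(\Z^{d})}^{2},\qquad\big\||\partial_{j}u|^{2}\big\|_{\ell^{2,k}(\Z^{d})}\lesssim\|\partial_{j}u\|_{\ell^{2,k}(\Z^{d})}^{2}\lesssim\|u\|_{\ell^{2,k}(\Z^{d})}^{2},
\]
and similarly $v\mapsto|v|^{2}$ is locally Lipschitz from $\ell^{2,k}(\Z^{d})$ to itself, so the right-hand sides stay in $C^{0}([0,T];\ell^{2,k}(\Z^{d}))$ when $u\in C^{2}$.

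Concretely, set $u_{-1}\equiv0$ and let $u_{m}$ solve $\partial_{t}^{2}u_{m}-\Delta u_{m}=|\partial_{t}u_{m-1}|^{2}$ (resp. $|\partial_{j}u_{m-1}|^{2}$) with data $(f,g)$; by Theorem~\ref{ge} with $g^{jk}=\delta_{jk}$, together with the bound above showing the source lies in $C^{0}([0,T];\ell^{2,k}(\Z^{d}))$, each $u_{m}\in C^{2}([0,T];\ell^{2,k}(\Z^{d}))$ exists and is unique. Put $A_{m}(t):=\|u_{m}(\cdot,t)\|_{\ell^{2,k}(\Z^{d})}+\|\partial_{t}u_{m}(\cdot,t)\|_{\ell^{2,k}(\Z^{d})}$. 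Theorem~\ref{energy estimate1} and the nonlinearity bound give
\[
A_{m}(t)\lesssim(1+|t|^{k+1})\Big(\|f\|_{\ell^{2,k}(\Z^{d})}+\|g\|_{\ell^{2,k}(\Z^{d})}+\int_{0}^{t}A_{m-1}(s)^{2}\,ds\Big),
\]
and choosing $M$ large relative to $\|f\|_{\ell^{2,k}(\Z^{d})}+\|g\|_{\ell^{2,k}(\Z^{d})}$ and then $T=T(M)>0$ small (so that $(1+T^{k+1})(\|f\|_{\ell^{2,k}(\Z^{d})}+\|g\|_{\ell^{2,k}(\Z^{d})}+TM^{2})\le M$) yields $A_{m}(t)\le M$ for all $m$ and all $t\in[0,T]$, exactly as in Step~2 of the proof of Theorem~\ref{Th1}.

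For convergence, $w_{m}:=u_{m}-u_{m-1}$ solves a linear wave equation with zero data and source $|\partial_{t}u_{m-1}|^{2}-|\partial_{t}u_{m-2}|^{2}$ (resp. the $\partial_{j}$ analogue); using $\big||a|^{2}-|b|^{2}\big|\le|a-b|(|a|+|b|)$, the algebra property, and $A_{m}\le M$, this source has $\ell^{2,k}(\Z^{d})$-norm $\lesssim_{M}C_{m-1}(s)$ where $C_{m}(t):=\|w_{m}(\cdot,t)\|_{\ell^{2,k}(\Z^{d})}+\|\partial_{t}w_{m}(\cdot,t)\|_{\ell^{2,k}(\Z^{d})}$. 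Theorem~\ref{energy estimate1} then gives $C_{m}(t)\le C_{T}\int_{0}^{t}C_{m-1}(s)\,ds$, which iterated yields $C_{m}(t)\lesssim_{M}(C_{T}t)^{m}/m!$; hence $\{u_{m}\}$ converges in $C^{1}([0,T];\ell^{2,k}(\Z^{d}))$ to a limit $u$. Since $\Delta$ is bounded on $\ell^{2,k}(\Z^{d})$ (a smooth Fourier multiplier, via Theorem~\ref{iso} for $k=1$) and $v\mapsto|v|^{2}$ is locally Lipschitz, $\partial_{t}^{2}u_{m}=\Delta u_{m}+|\partial_{t}u_{m-1}|^{2}$ (resp. $+|\partial_{j}u_{m-1}|^{2}$) converges in $C^{0}([0,T];\ell^{2,k}(\Z^{d}))$, so $u\in C^{2}([0,T];\ell^{2,k}(\Z^{d}))$ and solves \eqref{fad}. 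Uniqueness follows as in Step~4 of the proof of Theorem~\ref{Th1}: the difference of two solutions has zero data and satisfies $C(t)\le C_{T}\int_{0}^{t}C(s)\,ds$, so $C\equiv0$ by Gronwall.

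I do not expect a serious obstacle: the step that is delicate on $\R^{n}$ — the derivative loss caused by the quadratic gradient term, which classically forces a null condition or higher regularity (this is where the inspiration from \cite{21} enters) — is dissolved here by the boundedness of $\partial_{j}$ on $\ell^{2,k}(\Z^{d})$ (Theorem~\ref{bdn-p} in the weighted case $k=1$) and by the algebra structure of $\ell^{2,k}(\Z^{d})$. The only points needing a little care are handling $k=0$ and $k=1$ uniformly and fixing $M$ before $T$. In fact, since $|\partial_{t}u|^{2}$ and $|\partial_{j}u|^{2}$ are smooth functions of $u'$ vanishing at the origin, \eqref{fad} is a special case of \eqref{equation1} with $g^{jk}=\delta_{jk}$, so Theorem~\ref{Th1} already gives local existence, uniqueness and a continuation criterion; the direct argument above is worth recording because it avoids the exponential factor of Theorem~\ref{energy estimate2} by using the cleaner estimate of Theorem~\ref{energy estimate1}.
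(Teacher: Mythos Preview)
Your proposal is correct and follows essentially the same route as the paper: both arguments use the explicitly time-dependent energy estimate of Theorem~\ref{energy estimate1} together with the quadratic bound $\||\partial_t v|^{2}\|_{\ell^{2,k}}\lesssim\|\partial_t v\|_{\ell^{2,k}}^{2}$ (and its $\partial_j$ analogue) to close a fixed-point scheme on a ball in $C^{1}([0,T];\ell^{2,k}(\Z^{d}))$ for $T$ small. The only cosmetic difference is packaging: the paper applies the contraction mapping theorem directly to the solution map $\Phi(v)=u$, whereas you spell out the Picard iterates $u_{m}$ in the style of the proof of Theorem~\ref{Th1}; your version is slightly more explicit about the algebra property of $\ell^{2,k}$, the boundedness of $\partial_j$ via Theorem~\ref{bdn-p}, and the upgrade from $C^{1}$ to $C^{2}$, but the substance is the same.
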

		\begin{proof}
			We only prove the case $|\partial_{t}u|^{2},$ since the proof of $|\partial_{j}u|^{2}$ is similar. At first, we define
			\begin{equation*}
				\Gamma:=C^{1}(\left[0,T\right];\ell^{2,k}(\Z^{d})); \;  \|u\|_{\Gamma}:=\sup_{0\le t\le T}\left[\|u(\cdot,t)\|_{\ell^{2,k}(\Z^{d})}+ \|\partial_{t}u(\cdot,t)\|_{\ell^{2,k}(\Z^{d})}  \right]. 
			\end{equation*}
			\par To find a solution, we define the map $\Phi(v)=u$  be the solution of 
			\begin{equation*}
				\left\{
				\begin{aligned}
					&  \partial_{t}^{2} u-\Delta u=|\partial_{t}v|^{2} \; or\; |\partial_{j}v|^{2},   \\
					&  u(x,0)=f(x),  \partial_{t}u(x,0)=g(x), (x,t)\in \Z^{d} \times \R.
				\end{aligned}
				\right.
			\end{equation*}
			To apply contraction mapping theorem, we consider $X:=\lbrace v\in \Gamma; \|v\|_{\Gamma}\le A \rbrace$, where A is to be determined later. Then we only need to show that $(1):\Phi(X)\subseteq X$ and $(2):\Phi$ is a contraction mapping in $X$.
			\par 
			For (1), we use explicitly time-dependent energy estimate in Theorem \ref{energy estimate1} and get
			\begin{equation*}
				\forall v\in X, \; \|\Phi(v)\|_{\Gamma}\lesssim (1+T^{k+1})\left[\|f\|_{\ell^{2,k}(\Z^{d})}+\|g\|_{\ell^{2,k}(\Z^{d})}+\int_{0}^{T}\||\partial_{t}u|^{2}(\cdot,t)\|_{\ell^{2,k}(\Z^{d})}dt\right].
			\end{equation*}
			Noting that $\||\partial_{t}u|^{2}(\cdot,t)\|_{\ell^{2,k}(\Z^{d})}\le A^{2}$, we see that $\|\Phi(v)\|_{\Gamma}\le C(1+T^{k+1})(R+TA^{2}),$ where $C>0$ is a constant and  $R:=\|f\|_{\ell^{2,k}(\Z^{d})}+\|g\|_{\ell^{2,k}(\Z^{d})}$. Then choosing $T\ll_{A} 1 $ and $R\ll A$, we can ensure $C(1+T^{k+1})(R+TA^{2})\le A$, which means $\Phi(X)\subseteq X$.
			\par 
			For (2), noting that $$\Box\left[\Phi(v_{2})-\Phi(v_{1})\right](x,t)=|\partial_{t}v_{2}|^{2}(x,t)-|\partial_{t}v_{1}|^{2}(x,t), \ \forall v_{1},v_{2}\in X,$$ we apply the energy estimate again and get
			\begin{equation*}
				\|\Phi(v_{2})-\Phi(v_{1})\|_{\Gamma}\lesssim (1+T^{k+1})\int_{0}^{T}(\|\partial_{t}v_{1}(\cdot,t)\|_{\ell^{2,k}(\Z^{d})}+\|\partial_{t}v_{2}(\cdot,t)\|_{\ell^{2,k}(\Z^{d})})\cdot \|(\partial_{t}v_{2}-\partial_{t}v_{1})(\cdot,t)\|_{\ell^{2,k}(\Z^{d})}dt
			\end{equation*}
			\begin{equation*}
				\lesssim (1+T^{k+1})T\cdot (\|v_{1}\|_{\Gamma}+\|v_{2}\|_{\Gamma})\cdot \|v_{2}-v_{1}\|_{\Gamma}.
			\end{equation*}
			Therefore, we have $$\|\Phi(v_{2})-\Phi(v_{1})\|_{\Gamma}\le CAT(1+T^{k+1})\|v_{2}-v_{1}\|_{\Gamma},$$ for some constant $C>0$. Then $CAT(1+T^{k+1})<1 $, as $T\ll_{A}1$, which ensures $\Phi$ is a contraction mapping in $X$. 
		\end{proof}
		\begin{rem}
			Although this result is contained in previous Theorem \ref{Th1}, this provides a simple proof, which directly shows the strength of the energy estimate.
			Besides, this proof is applicable for other nonlinear terms like $|u|^{p}, |\partial_{t}u|^{p},\cdots $, but quadratic derivative nonlinear term has deep connection with Faddeev equation, which is still not be solved completely. 
		\end{rem}
		In the end, we present an energy estimate, which has no time-dependent coefficient and exponential term.
		
		\begin{thm}\label{www}
			When $g^{jk}\equiv \delta_{jk}$, which means $\Box_{g}$ is the traditional d'Alembert operator $\Box:=\partial_{t}^{2}-\Delta$, we have the following stronger energy estimate for $u\in C^{2}([0,T];\ell^{2}(\Z^{d}))$
			\begin{equation*}
				\|u'(\cdot,t)\|_{\ell^{2}(\Z^{d})}\le \left(\| u'(\cdot,0)\|_{\ell^{2}(\Z^{d})}+\int_{0}^{t}\|\Box u(\cdot,s)\|_{\ell^{2}(\Z^{d})}ds\right).
			\end{equation*}
		\end{thm}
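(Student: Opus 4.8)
The plan is to run the classical energy identity, which now closes up \emph{exactly} (with constant $1$) precisely because the new derivative reconstructs the Laplacian, $\Delta=\sum_{j=1}^d\partial_j\circ\partial_j$, and is skew-adjoint on $\ell^2(\Z^d)$ by Theorem~\ref{integration by part}.

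First I would set
\[
E(t):=\|u'(\cdot,t)\|_{\ell^2(\Z^d)}^2=\|\partial_t u(\cdot,t)\|_{\ell^2(\Z^d)}^2+\sum_{j=1}^d\|\partial_j u(\cdot,t)\|_{\ell^2(\Z^d)}^2 .
\]
Each $\partial_j u(\cdot,t)$ lies in $\ell^2(\Z^d)$ by the $\ell^2$-boundedness of $\partial_j$ proved in Section~2, so $E(t)<\infty$; moreover, since $u\in C^2([0,T];\ell^2(\Z^d))$ and $\partial_j$ is a bounded linear operator on $\ell^2(\Z^d)$, the map $t\mapsto\partial_j u(\cdot,t)$ is itself $C^2$ into $\ell^2(\Z^d)$, which legitimizes differentiating $E$ term by term. (Alternatively, one may transfer the whole computation to $L^2(\mathbb T^d)$ via $\mathcal F$, where $\partial_j$ becomes multiplication by $2i\sin(x_j/2)$ and $-\Delta$ by $4\sum_j\sin^2(x_j/2)$, and differentiate under the integral there.)

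Next I differentiate and use skew-adjointness. Since $\tfrac{d}{dt}\|\partial_j u\|_{\ell^2}^2=2\,\mathrm{Re}\,\langle\partial_j\partial_t u,\partial_j u\rangle=-2\,\mathrm{Re}\,\langle\partial_t u,\partial_j\partial_j u\rangle$ by Theorem~\ref{integration by part}, summing over $j$ and adding the $\partial_t$-term gives
\[
\frac{d}{dt}E(t)=2\,\mathrm{Re}\,\langle\partial_t u(\cdot,t),\partial_t^2 u(\cdot,t)\rangle-2\,\mathrm{Re}\,\langle\partial_t u(\cdot,t),\Delta u(\cdot,t)\rangle=2\,\mathrm{Re}\,\langle\partial_t u(\cdot,t),\Box u(\cdot,t)\rangle .
\]
By the Cauchy--Schwarz inequality, $\frac{d}{dt}E(t)\le 2\|\partial_t u(\cdot,t)\|_{\ell^2(\Z^d)}\,\|\Box u(\cdot,t)\|_{\ell^2(\Z^d)}\le 2\sqrt{E(t)}\,\|\Box u(\cdot,t)\|_{\ell^2(\Z^d)}$.

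Finally I pass from $E$ to $\sqrt{E}=\|u'\|_{\ell^2}$ by regularizing to avoid the non-smoothness of the square root at the origin: for $\varepsilon>0$,
\[
\frac{d}{dt}\sqrt{E(t)+\varepsilon}=\frac{E'(t)}{2\sqrt{E(t)+\varepsilon}}\le\frac{\sqrt{E(t)}}{\sqrt{E(t)+\varepsilon}}\,\|\Box u(\cdot,t)\|_{\ell^2(\Z^d)}\le\|\Box u(\cdot,t)\|_{\ell^2(\Z^d)} .
\]
Integrating from $0$ to $t$ and letting $\varepsilon\to0^+$ yields $\|u'(\cdot,t)\|_{\ell^2(\Z^d)}\le\|u'(\cdot,0)\|_{\ell^2(\Z^d)}+\int_0^t\|\Box u(\cdot,s)\|_{\ell^2(\Z^d)}\,ds$, as claimed. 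The only delicate points are the justification of term-by-term differentiation of $E$ (handled by boundedness of $\partial_j$ on $\ell^2$, or by working on $L^2(\mathbb T^d)$) and the square-root regularization above; the conceptual heart of the argument — that the cross terms collapse into $\mathrm{Re}\,\langle\partial_t u,\Box u\rangle$ — is immediate from $\Delta=\sum_j\partial_j\circ\partial_j$ and the skew-adjointness of $\partial_j$, so I do not expect any substantial obstacle.
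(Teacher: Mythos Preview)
Your proof is correct and follows essentially the same route as the paper: define the quadratic energy $E(t)=\|u'(\cdot,t)\|_{\ell^2}^2$, differentiate, use the skew-adjointness of $\partial_j$ (Theorem~\ref{integration by part}) together with $\Delta=\sum_j\partial_j\circ\partial_j$ to collapse the terms into $2\,\mathrm{Re}\,\langle\partial_t u,\Box u\rangle$, and then pass from the differential inequality for $E$ to one for $\sqrt{E}$. Your $\varepsilon$-regularization of the square root is in fact more careful than the paper, which simply divides by $2E(t)^{1/2}$ without comment.
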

		\begin{proof}
			We define the energy $E(t)$ as 
			\begin{equation*}
				E(t):=\sum_{m\in \Z^{d}}\left(|\partial_{t}u(m,t)|^{2}+\sum_{j=1}^{d}|\partial_{j}u(m,t)|^2\right).
			\end{equation*}
			Differentiating $E(t)$, we derive the following identity
			\begin{equation*}
				\frac{d}{dt}E(t)=\sum_{m\in \Z^{d}}\partial_{t}^{2}u(m,t)\overline{\partial_{t}u(m,t)}+\partial_{t}u(m,t)\overline{\partial_{t}^{2}u(m,t)}+(\ast)
			\end{equation*}
			\begin{equation*}
				(\ast)= \sum_{j=1}^{d}\sum_{m\in \Z^{d}}\partial_{j}(\partial_{t}u)(m,t)\overline{\partial_{j}u(m,t)}+\overline{\partial_{j}(\partial_{t}u)(m,t)}\partial_{j}u(m,t).
			\end{equation*}
			Applying integration by part formula from Theorem \ref{integration by part}, we can derive
			\begin{equation*}
				(\ast)=-\sum_{j=1}^{d}\sum_{m\in \Z^{d}}\partial_{t}u(m,t)\overline{\partial_{jj}u(m,t)}+\partial_{jj}u(m,t)\overline{\partial_{t}u(m,t)}
			\end{equation*}
			\begin{equation*}
				=-\sum_{m\in \Z^{d}}\partial_{t}u(m,t)\overline{\Delta u(m,t)}+\Delta u(m,t)\overline{\partial_{t}u(m,t)}.
			\end{equation*}
			Substituting the d'Alembert operator $\Box=\partial_{t}^{2}-\Delta$, we have the following identity
			\begin{equation*}
				\frac{d}{dt}E(t)=\sum_{m\in \Z^{d}}\Box u(m)\overline{\partial_{t}u(m)}+\partial_{t}u(m)\overline{\Box u(m)}\le 2 E(t)^{1/2}\|\Box u\|_{\ell^{2}(\Z^{d})}.
			\end{equation*}
			Then dividing by $2E(t)^{1/2}$ and applying the fundamental theorem of calculus, we immediately get the desired result.
		\end{proof}
		\begin{rem}
			This energy estimate in Theorem \ref{www} is strong, since it's of the form $\le$ instead of $\lesssim,$ and it has no exponential term. Moreover, the explicitly time-dependent energy estimate in Theorem \ref{energy estimate1} is a direct consequence of this energy estimate, by applying the fundamental theorem of calculus.
		\end{rem}
		
		\newpage
		\section*{Acknowledgement}
		B. Hua is supported by NSFC, No. 12371056, and by Shanghai Science
		and Technology Program [Project No. 22JC1400100]. J. Wang is supported by NSFC, No. 123B1035.
		
		\bigskip
		\bigskip
		

		\bibliographystyle{alpha}
		\bibliography{NNonlinearwave}

	\end{document}